\newtheorem{theorem}{Theorem}[section]
\newtheorem{lemma}[theorem]{Lemma}
\newtheorem{proposition}[theorem]{Proposition}
\newtheorem{corollary}[theorem]{Corollary}
\theoremstyle{definition}
\newtheorem{definition}[theorem]{Definition}
\theoremstyle{remark}
\newtheorem{remark}[theorem]{Remark}
\numberwithin{equation}{section}
\begin{document}
\address{$^{[1,3]}$ Department of Mathematics, Jadavpur University, Kolkata 700032, West Bengal, India.}
\email{\url{pintubhunia5206@gmail.com} ; \url{kalloldada@gmail.com}}

\address{$^{[2_a]}$ University of Monastir, Faculty of Economic Sciences and Management of Mahdia, Mahdia, Tunisia}
\address{$^{[2_b]}$ Laboratory Physics-Mathematics and Applications (LR/13/ES-22), Faculty of Sciences of Sfax, University of Sfax, Sfax, Tunisia}
\email{\url{kais.feki@hotmail.com}\,;\,\url{kais.feki@fsegma.u-monastir.tn}}

\subjclass[2010]{47A12, 46C05, 47A05}
\keywords{Positive operator, $A$-numerical radius, Semi-inner product}

\date{\today}
\author[P. Bhunia, K. Feki and K. Paul] {Pintu Bhunia$^{1}$, Kais Feki$^{2_{a,b}}$ and Kallol Paul$^{3}$ }
\title[Generalized $A$-numerical radius]{\Small{Generalized $A$-numerical radius of operators and related inequalities}}

\maketitle

\begin{abstract}
Let $A$ be a non-zero positive bounded linear operator on a complex Hilbert space $(\mathcal{H},\langle\cdot,\cdot\rangle)$. Let $\omega_A(T)$ denote the $A$-numerical radius of an operator $T$ acting on the semi-Hilbert space $(\mathcal{H},\langle\cdot,\cdot\rangle_A)$, where $\langle x, y\rangle_{A} :=\langle Ax, y\rangle$ for all $x,y\in \mathcal{H}$. Let $N_A(\cdot)$ be a seminorm on the algebra of all $A$-bounded operators acting on $\mathcal{H}$ and let $T$ be an operator which admits $A$-adjoint. Then, we define  the generalized $A$-numerical radius as 
$$\omega_{N_A}(T)=\displaystyle{\sup_{\theta \in \mathbb{R}}}\;
N_A\left(\frac{e^{i\theta}T+e^{-i\theta}T^{\sharp_A}}{2}\right),$$
where $T^{\sharp_A}$ denotes a distinguished $A$-adjoint of $T$. We develop several generalized $A$-numerical radius inequalities  from which follows  the existing  numerical radius and $A$-numerical radius inequalities. We also obtain bounds for generalized $A$-numerical radius of sum and product of operators. Finally, we study $\omega_{N_A}(\cdot)$ in the setting of two particular seminorms $N_A(\cdot)$.
\end{abstract}
\section{Introduction and Preliminaries}\label{S1}
 \noindent Let $\mathbb{B}(\mathcal{H})$  denote the $C^{\ast}$-algebra of all bounded linear operators acting on a complex Hilbert space $\mathcal{H}$ with with inner product $\langle \cdot,  \cdot\rangle$ and associated norm $\|\cdot\|$.  For $T\in\mathbb{B}(\mathcal{H})$, let $\|T\|=\sup\{\|Tx\|\,;\|x\| = 1\}$  and $
\omega(T)= \sup\{|\langle Tx,  x\rangle|\,; \|x\| = 1\}$
stand for the usual operator norm and the numerical radius of $T$, respectively.  Throughout this paper,  $A$ stands for  a non-zero positive bounded linear operator on $\mathcal{H}$, i.e., $\langle Ax,x \rangle \geq 0$ for all $x\in \mathcal{H}$. Clearly $A$ induces a  semi-inner product $\langle \cdot,  \cdot\rangle_A$ on $\mathcal{H}$ defined by ${\langle x, y\rangle}_A = \langle Ax, y\rangle$ for all $x,y\in \mathcal{H}$, and the semi-inner product  $\langle \cdot,  \cdot\rangle_A$ induces a seminorm $\|\cdot\|_A$ on $\mathcal{H}$ defined by $ \| x\|_A= \sqrt{\langle Ax,x \rangle}.$  It is easy to verify that $\|\cdot\|_A$ is a norm if and only if $A$ is injective. Also $(\mathcal{H}, \|\cdot\|_A)$ is complete space if and only if the range space $\mathcal{R}(A)$ of $A$ is closed in $\mathcal{H}$. By $\overline {\mathcal{R}(T)}$ we denote the norm closure of $\mathcal{R}(T)$ in $\mathcal{H}$.  Given $T \in \mathbb{B}(\mathcal{H})$   if  there exists a constant $c>0$ such that ${\|Tx\|}_{A} \le  c{\|x\|}_{A}$ for all $x\in \overline{\mathcal{R}(A)},$   then A-operator seminorm of $T$ is given by: 
$\|T\|_A=\sup_{x\in  \overline{\mathcal{R}(A)},\,x\neq 0}\frac{\|Tx\|_A}{\|x\|_A}< +\infty.$
The collection of all operators $T$ in $\mathbb{B}(\mathcal{H})$ with $\|T\|_A< +\infty$ is denoted by $\mathcal{B}^A(\mathcal{H}).$  In general, $\mathcal{B}^A(\mathcal{H})$  is not a subalgebra of $\mathbb{B}(\mathcal{H}).$  An operator $S \in \mathbb{B}(\mathcal{H})$ is called an $A$-adjoint of $T \in \mathbb{B}(\mathcal{H})$ if  $\langle Tx, y \rangle_A = \langle x, Sy \rangle_A$ for all $x,y\in \mathcal{H}$, that is, if $S$ is a solution of the operator equation $AX=T^*A$. The existence of an $A$-adjoint of $T$ is not guaranteed. Let $ \mathbb{B}_A(\mathcal{H})$ denote the collection of all operators in $\mathbb{B}(\mathcal{H})$, which admit $A$-adjoints.  By Douglas theorem \cite{doug}, it follows that 
$$ \mathbb{B}_A(\mathcal{H}) = \{ T \in \mathbb{B}(\mathcal{H}) : R(T^*A) \subseteq R(A) \}.$$ 
For $T\in \mathbb{B}_A(\mathcal{H}),$  the operator equation $AX=T^*A$ has a unique solution, denoted by $T^{\sharp_A}$, satisfying $\mathcal{R}(T^{\sharp_A})\subseteq \overline{\mathcal{R}(A)}$. The operator $T^{\sharp_A}$ has similar, but not identical, properties as the classical $T^*$. For instance, in general, $(T^{\sharp_A})^{\sharp_A}$ is not equal to $T$. More precisely, $(T^{\sharp_A})^{\sharp_A}=T$ if and only if $\mathcal{R}(T)\subseteq \overline{\mathcal{R}(A)}$.  Also $T^{\sharp_A}=A^\dag T^*A$, where $A^\dag $ is the Moore-Penrose inverse of $A$. 
Let  $\mathbb{B}_{A^{1/2}}(\mathcal{H})$  denote the set of all bounded linear operators that admit  $A^{1/2}$-adjoints. Then, by Douglas theorem \cite{doug},
$$\mathbb{B}_{A^{1/2}}(\mathcal{H}) = \Big\{T\in \mathbb{B}(\mathcal{H})\,; \,\, \exists\,\, c>0\,\text{ such that }
\,\,{\|Tx\|}_{A} \le  c{\|x\|}_{A}, \,\, \forall x\in \mathcal{H}\Big\}.$$
Operators in $ \mathbb{B}_{A^{1/2}}(\mathcal{H})$ are called $A$-bounded operators.  
Note that $\mathbb{B}_{A}(\mathcal{H})$ and $\mathbb{B}_{A^{1/2}}(\mathcal{H})$ are two subalgebras of $\mathbb{B}(\mathcal{H})$ which are neither closed nor dense in $\mathbb{B}(\mathcal{H})$. Moreover, the inclusion $\mathbb{B}_{A}(\mathcal{H}) \subseteq \mathbb{B}_{A^{1/2}}(\mathcal{H})$ holds, (see  \cite{acg1}). 
An operator $T\in\mathbb{B}_A(\mathcal{H})$ is said to be $A$-selfadjoint if $AT$ is selfadjoint, that is, $AT = T^*A.$
  $T$ is said to be $A$-positive if $AT$ is positive and we write $T\geq_{A}0$. Clearly $A$-positive operator is always  $A$-selfadjoint. For $A$-selfadjoint operators $T,S\in\mathbb{B}_A(\mathcal{H})$,  we write $T\geq_{A}S$ if and only if $T-S\geq_{A}0.$ An operator $T\in  \mathbb{B}_A(\mathcal{H})$ is said to be $A$-unitary if $\|Tx\|_A=\|T^{\sharp_A}x\|_A=\|x\|_A$ for all $x\in \mathcal{H}$, and $A$-normal if $T^{\sharp_A}T=TT^{\sharp_A}$ (see \cite{acg1,saddi}. Observe that for $T,S \in  \mathcal{B}_A(\mathcal{H})$,  $(TS)^{\sharp_A}=S^{\sharp_A}T^{\sharp_A}$, $\|Tx\|_A\leq \|T\|_A\|x\|_A$ for all $x\in \mathcal{H}$  and $\|TS\|_A\leq \|T\|_A\|S\|_A.$ 

So far we have discussed about $A$-operator seminorm induced by the positive operator $A$ and some of its important basic properties. It is time to talk of $A$-numerical radius in the setting of  sem-Hilbertian space operators. It is well known that the numerical radius plays an important role in various fields of operator theory and matrix analysis (see \cite{goldberg,halmos}). Several generalizations of numerical radius have  been studied (see \cite{kittaneh0,saddi}).  An important generalization of the numerical radius is the well known $A$-numerical radius of an operator $T\in \mathbb{B}(\mathcal{H})$,  introduced by Saddi \cite{saddi} as
\begin{align*}
	\omega_A(T) = \sup\big\{\big|{\langle Tx, x\rangle}_A\big|\;;\,\,x\in \mathcal{H}, \,{\|x\|}_A = 1\big\}.
\end{align*}
Several results on the $A$-numerical radius have been established by many mathematicians, (see  \cite{BFP,BNP1,feki01,fekijk,feki03,KS,ZMXF} and the references therein).   $\omega_A(\cdot)$ defines  a seminorm on $\mathbb{B}_{A^{1/2}}(\mathcal{H})$ which is equivalent to the $A$-operator seminorm $\|\cdot\|_A$. Moreover, for all $T\in\mathbb{B}_{A^{1/2}}(\mathcal{H})$, the following inequality holds
\begin{align}\label{refines}
\frac{1}{2}\|T\|_A \leq \omega_A(T)\leq \|T\|_A.
\end{align}
The above inequalities are sharp, $\frac{1}{2}\|T\|_A = \omega_A(T)$ if $AT^2=0$ and $\omega_A(T) = \|T\|_A$ if $T^{\sharp_A}T=TT^{\sharp_A}$.
Several refinements of \eqref{refines} have been recently established by many authors (see \cite{BFP,BPN,fekijk,feki03,zamani1} and the references therein).
Note that (see \cite{fg}) for $T\in\mathbb{B}_{A^{1/2}}(\mathcal{H})$,
\begin{equation}\label{newsemi}
\|T\|_A=\sup\left\{|\big\langle Tx, y\big\rangle_A|\,;\;x,y\in \mathcal{H},\,\|x\|_{A}=\|y\|_{A}= 1\right\}.
\end{equation}
For $T\in \mathbb{B}_A({\mathcal H})$, we write
$\Re_A(T):=\frac{T+T^{\sharp_A}}{2}\;\;\text{ and }\;\;\Im_A(T):=\frac{T-T^{\sharp_A}}{2\rm i}.$
Recently \cite[Th. 4.11]{bm} (see also \cite[Th. 2.5]{zamani1}) it is observed that for  $T\in \mathbb{B}_{A}(\mathcal{H}) $, 
\begin{align*}
\omega_A(T) = \displaystyle{\sup_{\theta \in \mathbb{R}}}{\left\|\Re_A(e^{i\theta}T)\right\|}_A,
\end{align*}
Another important generalization of the numerical radius  is   introduced recently by Abu-Omar and Kittaneh in \cite{kittaneh0} as follows. Given a norm $N(\cdot)$ on $ \mathbb{B}(\mathcal{H}) ,$  the generalized numerical radius of $T\in\mathbb{B}(\mathcal{H})$, denoted by $\omega_{N}(T)$, is defined by
$$\omega_{N}(T)=\displaystyle{\sup_{\theta \in \mathbb{R}}}\;
N\Big(\Re\big(e^{i\theta}T\big)\Big).$$
 It is easy to see that when $N(\cdot)=\|\cdot\|$, then $\omega_N(\cdot)$ coincides with the classical numerical radius $w(\cdot)$. The reader is invited to see \cite{kittaneh0,bc,ZMXF} for intermediate properties and  inequalities of the  norm $\omega_{N}(\cdot)$. The main objective of this paper is to study the generalized numerical radius in the setting of semi-Hilbertian space operators. Let $N_A(\cdot)$ be a seminorm on $\mathbb{B}_{A}(\mathcal{H}).$ For  $T\in \mathbb{B}_{A}(\mathcal{H}),$ we define
$$\omega_{N_A}(T)=\displaystyle{\sup_{\theta \in \mathbb{R}}}\;
N_A\Big(\Re_A\big(e^{i\theta}T\big)\Big).$$
Clearly, if $N_A(\cdot)$ is the $A$-operator seminorm, then $\omega_{N_A}(\cdot)$ coincides with the $A$-numerical radius $\omega_{A}(\cdot)$. One can also consider  $N_A(\cdot)$ to be a seminorm on $\mathbb{B}_{A^{1/2}}(\mathcal{H}).$                In this paper, we aim to study some basic properties of $\omega_{N_A}(\cdot)$ and develop several inequalities related to this new concept.  In Section \ref{sec2}, we study  generalized  $A$-numerical radius $\omega_{N_A}(\cdot)$ and develop several equalities and inequalities involving it. In section \ref{s3}, some inequalities related to $\omega_{N_A}(\cdot)$ involving the product and the sum of operators are obtained. In section \ref{s4}, we give two concrete examples of our new seminorm and develop inequalities related to them.

We end this section with the following known results which will be needed in due course of time.
\begin{proposition}\label{diez} \cite{acg1,acg2,fekijk}
	Let $T \in \mathbb{B}_A({\mathcal{H}})$. Then the following statements hold.
	\begin{itemize}
		\item [(i)] $T^{\sharp_A} \in \mathbb{B}_A({\mathcal{H}})$, $(T^{\sharp_A})^{\sharp_A}=P_{\overline{\mathcal{R}(A)}}TP_{\overline{\mathcal{R}(A)}}$, $((T^{\sharp_A})^{\sharp_A})^{\sharp_A}=T^{\sharp_A}$ and
		\begin{equation}\label{ddi}
			P_{\overline{\mathcal{R}(A)}}T^{\sharp_A}=T^{\sharp_A}P_{\overline{\mathcal{R}(A)}}=T^{\sharp_A}.
		\end{equation}
			\end{itemize}
		(Here $P_{\overline{\mathcal{R}(A)}}$ stands for the orthogonal projection onto ${\overline{\mathcal{R}(A)}}$.)
			\begin{itemize}
	\item [(ii)] $\|T^{\sharp_A}T\|_A = \| TT^{\sharp_A}\|_A=\|T\|_A^2 =\|T^{\sharp_A}\|_A^2$.
	\end{itemize}
\end{proposition}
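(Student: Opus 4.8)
The plan is to reduce everything to the defining property of the distinguished $A$-adjoint — that $S^{\sharp_A}$ is the \emph{unique} solution of $AX=S^*A$ whose range lies in $\overline{\mathcal{R}(A)}$ — together with two elementary facts about the orthogonal projection $P:=P_{\overline{\mathcal{R}(A)}}$: since $A$ is self-adjoint we have $\ker A=\overline{\mathcal{R}(A)}^{\perp}$, whence $AP=PA=A$; and consequently $\|Px\|_A^2=\langle APx,Px\rangle=\langle Ax,x\rangle=\|x\|_A^2$, so $P$ is an $A$-isometry and $\|PS\|_A=\|S\|_A$, $\|SP\|_A\le\|S\|_A$ for every $A$-bounded $S$. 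The only analytic input beyond bookkeeping is the Cauchy--Schwarz inequality for the semi-inner product, $|\langle u,v\rangle_A|\le\|u\|_A\|v\|_A$.

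For (i), I would first note that taking the Hilbert-space adjoint of $AT^{\sharp_A}=T^*A$ and using $A^*=A$ gives $(T^{\sharp_A})^*A=AT$; hence $\mathcal{R}((T^{\sharp_A})^*A)=\mathcal{R}(AT)\subseteq\mathcal{R}(A)$, which by Douglas' theorem \cite{doug} means $T^{\sharp_A}\in\mathbb{B}_A(\mathcal{H})$, and moreover $(T^{\sharp_A})^{\sharp_A}$ is the unique range-restricted solution of $AX=AT$. I would then check that $X:=PTP$ does the job: its range is contained in $\overline{\mathcal{R}(A)}$, and $AX=APTP=ATP$, so it remains to see that $AT(I-P)=0$, i.e. that $T$ leaves $\ker A$ invariant. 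That holds because for $x\in\ker A$ and any $y$, $\langle ATx,y\rangle=\langle x,T^*Ay\rangle=0$, since $T^*Ay\in\mathcal{R}(T^*A)\subseteq\mathcal{R}(A)\subseteq(\ker A)^{\perp}$. Uniqueness then yields $(T^{\sharp_A})^{\sharp_A}=PTP$. Next, for \eqref{ddi}: $PT^{\sharp_A}=T^{\sharp_A}$ because $\mathcal{R}(T^{\sharp_A})\subseteq\overline{\mathcal{R}(A)}$, while $T^{\sharp_A}P=T^{\sharp_A}$ because for $x\in\ker A$ one has $AT^{\sharp_A}x=T^*Ax=0$ and $T^{\sharp_A}x\in\overline{\mathcal{R}(A)}$, so $T^{\sharp_A}x\in\ker A\cap\overline{\mathcal{R}(A)}=\{0\}$. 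Finally, applying the formula $(S^{\sharp_A})^{\sharp_A}=PSP$ with $S=T^{\sharp_A}$ and simplifying via \eqref{ddi} gives $((T^{\sharp_A})^{\sharp_A})^{\sharp_A}=PT^{\sharp_A}P=T^{\sharp_A}$.

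For (ii), the starting point is the estimate, valid for every $x$, $\|Tx\|_A^2=\langle T^{\sharp_A}Tx,x\rangle_A\le\|T^{\sharp_A}Tx\|_A\|x\|_A\le\|T^{\sharp_A}T\|_A\|x\|_A^2$, which after one submultiplicativity step gives $\|T\|_A^2\le\|T^{\sharp_A}T\|_A\le\|T^{\sharp_A}\|_A\|T\|_A$, and in particular $\|T\|_A\le\|T^{\sharp_A}\|_A$. Applying this last inequality to $T^{\sharp_A}$ in place of $T$ and then using part (i) together with the $A$-isometry of $P$, $\|T^{\sharp_A}\|_A\le\|(T^{\sharp_A})^{\sharp_A}\|_A=\|PTP\|_A\le\|T\|_A$; hence $\|T\|_A=\|T^{\sharp_A}\|_A$. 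Feeding this equality back into the chain above forces $\|T^{\sharp_A}T\|_A=\|T\|_A^2$, and applying the identity just obtained to $T^{\sharp_A}$ (so that $\|(T^{\sharp_A})^{\sharp_A}T^{\sharp_A}\|_A=\|T^{\sharp_A}\|_A^2$), then rewriting $(T^{\sharp_A})^{\sharp_A}T^{\sharp_A}=(PTP)T^{\sharp_A}=PTT^{\sharp_A}$ via \eqref{ddi} and using $\|P(\cdot)\|_A=\|\cdot\|_A$, yields $\|TT^{\sharp_A}\|_A=\|T\|_A^2$.

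The step I expect to need the most care is the invariance $T(\ker A)\subseteq\ker A$ and its packaging into $ATP=AT$, since every assertion in (i) rests on it; once that and the $A$-isometry of $P$ are in place, the remainder is routine projection bookkeeping in (i) and a purely formal $C^{*}$-type manipulation in (ii).
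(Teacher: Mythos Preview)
The paper does not prove this proposition; it is quoted as a known result from \cite{acg1,acg2,fekijk} at the end of Section~\ref{S1}, with no argument supplied. So there is nothing to compare against on the paper's side.

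Your argument is correct and self-contained. The key facts you isolate---$AP=PA=A$, the kernel invariance $T(\ker A)\subseteq\ker A$ deduced from $\mathcal{R}(T^*A)\subseteq\mathcal{R}(A)$, and the $A$-isometry of $P$---are exactly what is needed, and your use of the uniqueness clause in the Douglas factorization to pin down $(T^{\sharp_A})^{\sharp_A}=PTP$ is the standard route taken in the cited sources. The bootstrap in part~(ii), first extracting $\|T\|_A\le\|T^{\sharp_A}\|_A$ from Cauchy--Schwarz and then reversing it via $\|(T^{\sharp_A})^{\sharp_A}\|_A=\|PTP\|_A\le\|T\|_A$, is clean; the only cosmetic point is that the implication $\|T\|_A^2\le\|T^{\sharp_A}\|_A\|T\|_A\Rightarrow\|T\|_A\le\|T^{\sharp_A}\|_A$ is vacuous when $\|T\|_A=0$, but the conclusion is then trivial anyway.
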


\begin{proposition}\label{kk1}(\cite{bakfeki01,feki03})
	Let $T\in \mathbb{B}_A(\mathcal{H})$ be an $A$-selfadjoint operator. Then 
	\begin{itemize}
		\item [(i)] $T^{\sharp_A}$ is $A$-selfadjoint and $({T^{\sharp_A}})^{\sharp_A}=T^{\sharp_A}$.
		\item [(ii)] $\|T\|_{A}=\omega_A(T)$.
		\item [(iii)] $T^{2n}\geq_A 0$ for any positive integer $n$.
	\end{itemize}
\end{proposition}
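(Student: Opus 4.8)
The plan is to dispatch the three items in turn, each reducing to a short computation based on the defining relation $AT=T^{*}A$ of an $A$-selfadjoint operator and the characterization $AT^{\sharp_A}=T^{*}A$ of the distinguished $A$-adjoint. For (i), I would first observe that, since $A$ is selfadjoint, $(AT^{\sharp_A})^{*}=(T^{*}A)^{*}=A^{*}T=AT=T^{*}A=AT^{\sharp_A}$, so $AT^{\sharp_A}$ is selfadjoint, i.e.\ $T^{\sharp_A}$ is $A$-selfadjoint. For the identity $(T^{\sharp_A})^{\sharp_A}=T^{\sharp_A}$, start from $A(T-T^{\sharp_A})=AT-AT^{\sharp_A}=T^{*}A-T^{*}A=0$, so $\mathcal{R}(T-T^{\sharp_A})\subseteq\ker A=\overline{\mathcal{R}(A)}^{\perp}$ (the last equality because $A$ is selfadjoint), whence $P_{\overline{\mathcal{R}(A)}}T=P_{\overline{\mathcal{R}(A)}}T^{\sharp_A}=T^{\sharp_A}$ by \eqref{ddi}; substituting this into $(T^{\sharp_A})^{\sharp_A}=P_{\overline{\mathcal{R}(A)}}TP_{\overline{\mathcal{R}(A)}}$ from Proposition \ref{diez}(i) and invoking \eqref{ddi} once more gives $(T^{\sharp_A})^{\sharp_A}=T^{\sharp_A}P_{\overline{\mathcal{R}(A)}}=T^{\sharp_A}$. (Alternatively, since $T^{\sharp_A}$ is $A$-selfadjoint with $\mathcal{R}(T^{\sharp_A})\subseteq\overline{\mathcal{R}(A)}$, it is itself the distinguished solution of $AX=(T^{\sharp_A})^{*}A$, so the conclusion follows by uniqueness.)

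For (ii), the bound $\omega_A(T)\le\|T\|_A$ is already available from \eqref{refines}, so only $\|T\|_A\le\omega_A(T)$ needs an argument. The point is that $A$-selfadjointness makes the semi-inner product treat $T$ as selfadjoint: from $AT=T^{*}A$ one checks $\langle Tx,y\rangle_A=\langle Ax,Ty\rangle=\langle x,Ty\rangle_A$ for all $x,y\in\mathcal{H}$, and in particular $\langle Tx,x\rangle_A\in\mathbb{R}$. Using \eqref{newsemi}, fix $x,y\in\mathcal{H}$ with $\|x\|_A=\|y\|_A=1$ and, after replacing $y$ by a suitable unimodular multiple (which leaves $\|y\|_A$ unchanged), assume $\langle Tx,y\rangle_A\ge0$. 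Then the polarization identity $4\operatorname{Re}\langle Tx,y\rangle_A=\langle T(x+y),x+y\rangle_A-\langle T(x-y),x-y\rangle_A$, the scaling bound $|\langle Tu,u\rangle_A|\le\omega_A(T)\|u\|_A^{2}$, and the parallelogram law $\|x+y\|_A^{2}+\|x-y\|_A^{2}=2\|x\|_A^{2}+2\|y\|_A^{2}=4$ (valid for semi-inner products) together give $|\langle Tx,y\rangle_A|=\operatorname{Re}\langle Tx,y\rangle_A\le\omega_A(T)$; taking the supremum over $x,y$ yields $\|T\|_A\le\omega_A(T)$.

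For (iii), I would first note that $\mathbb{B}_A(\mathcal{H})$ is a subalgebra, so $T^{2n}\in\mathbb{B}_A(\mathcal{H})$, and that iterating $AT=T^{*}A$ gives $AT^{n}=(T^{*})^{n}A=(T^{n})^{*}A$, i.e.\ $T^{n}$ is again $A$-selfadjoint, so $\langle T^{n}u,v\rangle_A=\langle u,T^{n}v\rangle_A$. Hence, for every $x\in\mathcal{H}$,
$$\langle T^{2n}x,x\rangle_A=\langle T^{n}(T^{n}x),x\rangle_A=\langle T^{n}x,T^{n}x\rangle_A=\|T^{n}x\|_A^{2}\ge0,$$
that is, $AT^{2n}\ge0$, so $T^{2n}\ge_A0$. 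The only mild care needed throughout is the bookkeeping with the seminorm $\|\cdot\|_A$ and the projection $P_{\overline{\mathcal{R}(A)}}$ (e.g.\ the identity $\ker A=\overline{\mathcal{R}(A)}^{\perp}$ used in (i)); the genuinely substantive step is the polarization argument in (ii), everything else being routine manipulation with $AT=T^{*}A$.
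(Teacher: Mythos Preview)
Your argument is correct in all three parts. Note, however, that the paper does not supply its own proof of this proposition: it is stated as a known result with citations to \cite{bakfeki01,feki03}, so there is no ``paper's proof'' to compare against. Your approach is the standard one and matches what one finds in the cited sources: (i) follows from the defining relations $AT=T^{*}A$ and $AT^{\sharp_A}=T^{*}A$ together with the projection identities in Proposition~\ref{diez}; (ii) is the classical selfadjoint polarization bound transplanted to the semi-inner product $\langle\cdot,\cdot\rangle_A$; and (iii) is the observation that $A$-selfadjointness is inherited by powers, so $\langle T^{2n}x,x\rangle_A=\|T^{n}x\|_A^{2}\ge0$. The only cosmetic remark is that in (ii) you appeal to \eqref{refines}, which as stated in the paper is for $T\in\mathbb{B}_{A^{1/2}}(\mathcal{H})$; since $\mathbb{B}_A(\mathcal{H})\subseteq\mathbb{B}_{A^{1/2}}(\mathcal{H})$ this is fine, and in any case the bound $\omega_A(T)\le\|T\|_A$ is immediate from Cauchy--Schwarz in $\langle\cdot,\cdot\rangle_A$.
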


\section{Generalized $A$-numerical radius}\label{sec2}

\noindent In this section, we begin with the definition of generalized $A$-numerical radius.
\begin{definition}
Let $N_A(\cdot)$ be a seminorm on $\mathbb{B}_{A}(\mathcal{H}).$  The function $\omega_{N_A}(\cdot)\,:\, \mathbb{B}_{A}(\mathcal{H})\to \mathbb{R}^{+}$, is defined as
\begin{equation}\label{def0}
\omega_{N_A}(T)=\displaystyle{\sup_{\theta \in \mathbb{R}}}\;
N_A\Big(\Re_A\big(e^{i\theta}T\big)\Big),
\end{equation}
for all $T\in \mathbb{B}_{A}(\mathcal{H})$.
\end{definition}
 \noindent It is easy to verify that $\omega_{N_A}(\cdot)$ defines a seminorm on $\mathbb{B}_{A}(\mathcal{H})$.
By replacing $T$ by ${\rm i} T$ in \eqref{def0}, one may see that
  \begin{equation}\label{def1}
\omega_{N_A}(T)=\displaystyle{\sup_{\theta \in \mathbb{R}}}\;
N_A\Big(\Im_A\big(e^{i\theta}T\big)\Big).
\end{equation}


\noindent We start our work with noting that $\max \{a,b\}=\frac{a+b}{2}+\frac{|a-b|}{2}$ for all $a,b\in \mathbb{R}.$ By using this elementary identity for real numbers, we first develop the following inequality.
\begin{theorem}\label{th1p}
	Let $T \in \mathbb{B}_A(\mathcal{H})$. Then
	\begin{eqnarray*}
		\omega_{N_A}(T)&\geq& \frac{ N_A(T)}{2} +  \frac{   \big|\,N_A(\Re_A(T))-N_A(\Im_A(T))                                                                                                                                                                                                                                                                                                                                                                                                                                                           \, \big|}{2}.
	\end{eqnarray*}
\end{theorem}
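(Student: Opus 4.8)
The plan is to exploit the elementary identity $\max\{a,b\}=\tfrac{a+b}{2}+\tfrac{|a-b|}{2}$ together with the decomposition $T=\Re_A(T)+\mathrm{i}\,\Im_A(T)$. First I would write, for a real parameter $\theta$, the operator $\Re_A(e^{\mathrm{i}\theta}T)=\cos\theta\,\Re_A(T)-\sin\theta\,\Im_A(T)$, which is the standard computation using $T^{\sharp_A}$ in place of $T^*$; taking $\theta=0$ and $\theta=\pi$ gives $N_A(\Re_A(T))\le\omega_{N_A}(T)$, and taking $\theta=\pm\pi/2$ together with \eqref{def1} (equivalently, replacing $T$ by $\mathrm{i}T$) gives $N_A(\Im_A(T))\le\omega_{N_A}(T)$. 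Hence $\max\{N_A(\Re_A(T)),N_A(\Im_A(T))\}\le\omega_{N_A}(T)$, and by the $\max$ identity this already yields
\[
\omega_{N_A}(T)\ \ge\ \frac{N_A(\Re_A(T))+N_A(\Im_A(T))}{2}+\frac{\big|\,N_A(\Re_A(T))-N_A(\Im_A(T))\,\big|}{2}.
\]

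The remaining step is to bound the first term from below by $\tfrac12 N_A(T)$. Since $N_A(\cdot)$ is a seminorm on $\mathbb{B}_A(\mathcal{H})$ and $T=\Re_A(T)+\mathrm{i}\,\Im_A(T)$, the triangle inequality (and homogeneity, noting $N_A(\mathrm{i}S)=N_A(S)$ for a seminorm on a complex space) gives
\[
N_A(T)\ \le\ N_A(\Re_A(T))+N_A(\Im_A(T)),
\]
so $\tfrac12\big(N_A(\Re_A(T))+N_A(\Im_A(T))\big)\ge\tfrac12 N_A(T)$. Combining this with the displayed inequality above yields the claimed bound
\[
\omega_{N_A}(T)\ \ge\ \frac{N_A(T)}{2}+\frac{\big|\,N_A(\Re_A(T))-N_A(\Im_A(T))\,\big|}{2}.
\]

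The only point requiring a little care — and the spot I would flag as the main (admittedly minor) obstacle — is justifying the two inequalities $N_A(\Re_A(T))\le\omega_{N_A}(T)$ and $N_A(\Im_A(T))\le\omega_{N_A}(T)$ cleanly from the definition \eqref{def0}. For the first, $\theta=0$ suffices directly since $\Re_A(e^{\mathrm{i}0}T)=\Re_A(T)$. For the second, one uses that replacing $T$ by $\mathrm{i}T$ leaves $\omega_{N_A}$ unchanged (this is exactly \eqref{def1}) and that $\Re_A(\mathrm{i}T)=\tfrac{\mathrm{i}T+(\mathrm{i}T)^{\sharp_A}}{2}=\tfrac{\mathrm{i}T-\mathrm{i}T^{\sharp_A}}{2}=\mathrm{i}\,\Im_A(T)$, whence $N_A(\Re_A(\mathrm{i}T))=N_A(\Im_A(T))$; taking $\theta=0$ in the definition of $\omega_{N_A}(\mathrm{i}T)$ then gives $N_A(\Im_A(T))\le\omega_{N_A}(\mathrm{i}T)=\omega_{N_A}(T)$. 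Everything else is the elementary $\max$ identity and the seminorm triangle inequality, so no genuine difficulty is expected.
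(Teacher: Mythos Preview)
Your proof is correct and follows essentially the same route as the paper: both arguments note that $\omega_{N_A}(T)\ge N_A(\Re_A(T))$ and $\omega_{N_A}(T)\ge N_A(\Im_A(T))$, apply the identity $\max\{a,b\}=\tfrac{a+b}{2}+\tfrac{|a-b|}{2}$, and then invoke the triangle inequality on $T=\Re_A(T)+\mathrm{i}\,\Im_A(T)$. One tiny slip: in your final paragraph the computation actually gives $\Re_A(\mathrm{i}T)=-\Im_A(T)$ rather than $\mathrm{i}\,\Im_A(T)$ (since $\Im_A(T)=\tfrac{T-T^{\sharp_A}}{2\mathrm{i}}$), but this is harmless because $N_A(-\Im_A(T))=N_A(\Im_A(T))$ anyway.
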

\begin{proof}
	From the definition of $\omega_{N_A}(\cdot)$, we have that $\omega_{N_A}(T) \geq N_A( \Re_A(T) )$ and $\omega_{N_A}(T) \geq  N_A( \Im_A(T) )$.
	Thus,
	\begin{eqnarray*}
		\omega_{N_A}(T)&\geq& \max \left \{  N_A( \Re_A(T) ),   N_A( \Im_A(T) )  \right \}\\
		&=& \frac{  N_A( \Re_A(T) )+  N_A( \Im_A(T) )}{2}+ \frac{\big|\, N_A( \Re_A(T) )- N_A( \Im_A(T) ) \, \big|}{2}\\
		&\geq& \frac{  N_A  \left ( \Re_A(T)  +{\rm i } \Im_A(T) \right )}{2}+ \frac{\big|\, N_A( \Re_A(T) )- N_A( \Im_A(T) ) \, \big|}{2}\\
		&=&  \frac{  N_A(T)}{2} +  \frac{   \big|\, N_A( \Re_A(T) )- N_A( \Im_A(T) )\, \big|}{2}.
	\end{eqnarray*}	
This completes the proof.	
\end{proof}

\begin{remark}\label{r1}
	For all $\theta \in \mathbb{R}$, we have, $\omega_{N_A}(e^{\rm i \theta}T)=\omega_{N_A}(T)$  and $N_A(e^{\rm i \theta}T)=N_A(T)$. Therefore, it follows from Theorem \ref{th1p} that
	\begin{eqnarray}\label{1eqn}
	\omega_{N_A}(T)&\geq& \frac{ N_A(T)}{2} + \sup_{\theta\in \mathbb{R}} \frac{   \big|\,N_A(\Re_A(e^{\rm i \theta}T))-N_A(\Im_A(e^{\rm i \theta}T))                                                                                                                                                                                                                                                                                                                                                                                                                                                           \, \big|}{2}.
	\end{eqnarray}
\end{remark}

\noindent In our next theorem, by using Remark \ref{r1} we prove the following equivalent conditions.

\begin{theorem}\label{th2p}
	Let $T \in \mathbb{B}_A(\mathcal{H})$.  Then the following conditions are equivalent. \\
	(i) $\omega_{N_A}(T)=\frac{N_A(T)}{2}$.\\
	(ii) $N_A\left ( \Re_A(e^{\rm i \theta }T) \right )=N_A\left ( \Im_A(e^{\rm i \theta }T )\right )=\frac{N_A(T)}{2}$ for all $\theta \in \mathbb{R}.$
\end{theorem}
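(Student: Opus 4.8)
The plan is to prove the two implications separately, using the inequality \eqref{1eqn} from Remark \ref{r1} as the main engine, together with the rotation invariance also recorded there.

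The implication (ii) $\Rightarrow$ (i) is immediate: if $N_A\left(\Re_A(e^{{\rm i}\theta}T)\right)=\tfrac{N_A(T)}{2}$ for every $\theta\in\mathbb{R}$, then taking the supremum over $\theta$ in the defining formula \eqref{def0} gives $\omega_{N_A}(T)=\tfrac{N_A(T)}{2}$ at once.

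For (i) $\Rightarrow$ (ii), assume $\omega_{N_A}(T)=\tfrac{N_A(T)}{2}$. First I would feed this equality into \eqref{1eqn}, which forces
\[
\sup_{\theta\in\mathbb{R}}\big|\,N_A(\Re_A(e^{{\rm i}\theta}T))-N_A(\Im_A(e^{{\rm i}\theta}T))\,\big|=0,
\]
so that $N_A(\Re_A(e^{{\rm i}\theta}T))=N_A(\Im_A(e^{{\rm i}\theta}T))$ for every $\theta$. It remains to identify this common value with $\tfrac{N_A(T)}{2}$. For the upper bound, the definition \eqref{def0} together with (i) gives $N_A(\Re_A(e^{{\rm i}\theta}T))\le\omega_{N_A}(T)=\tfrac{N_A(T)}{2}$ for all $\theta$. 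For the matching lower bound, I would use the $A$-Cartesian decomposition $e^{{\rm i}\theta}T=\Re_A(e^{{\rm i}\theta}T)+{\rm i}\,\Im_A(e^{{\rm i}\theta}T)$ (a direct consequence of the definitions of $\Re_A$ and $\Im_A$), the triangle inequality and absolute homogeneity of the seminorm $N_A$, and the rotation invariance $N_A(e^{{\rm i}\theta}T)=N_A(T)$ from Remark \ref{r1}, to get
\[
N_A(T)=N_A(e^{{\rm i}\theta}T)\le N_A(\Re_A(e^{{\rm i}\theta}T))+N_A(\Im_A(e^{{\rm i}\theta}T))=2\,N_A(\Re_A(e^{{\rm i}\theta}T)).
\]
Combining the two bounds yields $N_A(\Re_A(e^{{\rm i}\theta}T))=\tfrac{N_A(T)}{2}$, and by the equality already established also $N_A(\Im_A(e^{{\rm i}\theta}T))=\tfrac{N_A(T)}{2}$, for every $\theta\in\mathbb{R}$, which is exactly (ii).

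Since each step is a one-line consequence of \eqref{1eqn}, the triangle inequality, and Remark \ref{r1}, there is no serious obstacle; the only point that requires a moment's care is recognizing that the lower bound for $N_A(\Re_A(e^{{\rm i}\theta}T))$ must be extracted from the triangle inequality applied to the $A$-Cartesian decomposition (and the rotation invariance), rather than from \eqref{1eqn} alone, which by itself only yields that the two seminorms agree.
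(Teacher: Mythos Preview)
Your proof is correct and follows essentially the same approach as the paper: both use \eqref{1eqn} to force $N_A(\Re_A(e^{{\rm i}\theta}T))=N_A(\Im_A(e^{{\rm i}\theta}T))$ for all $\theta$, and then sandwich $N_A(\Re_A(e^{{\rm i}\theta}T))$ between $\tfrac{N_A(T)}{2}$ (from above via the definition of $\omega_{N_A}$) and $\tfrac{N_A(T)}{2}$ (from below via the triangle inequality applied to the $A$-Cartesian decomposition of $e^{{\rm i}\theta}T$). The paper merely packages this as a single chain of equalities and inequalities, whereas you separate the upper and lower bounds; the content is identical.
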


\begin{proof}
	We only prove (i) $\implies$ (ii), as	(ii)  $\implies$ (i) is trivial.
	Let $\omega_{N_A}(T)=\frac{ N_A(T)}{2}$. Then it follows from (\ref{1eqn}) that $ N_A( \Re_A(e^{\rm i \theta }T) )= N_A( \Im_A(e^{\rm i \theta }T) )\,\, \textit{for all $\theta \in \mathbb{R}$}.$ Therefore, for all $\theta \in \mathbb{R}$,  we have
	\begin{eqnarray*}
		N_A( \Re_A(e^{\rm i \theta }T) )&\leq& \omega_{N_A}(T)=\frac{  N_A(T) }{2}=\frac{  N_A(e^{\rm i \theta }T) }{2}\\
		&=&\frac{ N_A\left ( \Re_A(e^{\rm i \theta }T)+\rm i \Im_A(e^{\rm i \theta }T)\right )}{2}\\
		&\leq & \frac{ N_A( \Re_A(e^{\rm i \theta }T) )+ N_A( \Im_A(e^{\rm i \theta }T) )}{2} \\
		&   =&  N_A( \Re_A(e^{\rm i \theta }T) ).
	\end{eqnarray*}
	This implies that $N_A\left ( \Re_A(e^{\rm i \theta }T )\right )=N_A\left ( \Im_A(e^{\rm i \theta }T) \right )=\frac{N_A(T)}{2}$  for all $\theta \in \mathbb{R}.$	
\end{proof}

\noindent The following definition is useful for the sequel.
\begin{definition}
 The seminorm  $N_A(\cdot)$ is said to 
\begin{itemize}
  \item [(1)] be submultiplicative if $N_A(TS)\leq N_A(T)N_A(S)$ for all $T,S\in\mathbb{B}_{A^{1/2}}(\mathcal{H})$.
  \item [(2)] be $A$-selfadjoint invariant if $N_A(T)= N_A(T^{\sharp_A})$ for all $T\in\mathbb{B}_{A}(\mathcal{H})$.
  \item [(3)] be $A$-weakly unitarily invariant if $N_A(U^{\sharp_A}TU)=N_A(T)$ for all $T\in\mathbb{B}_{A^{1/2}}(\mathcal{H})$ and $A$-unitary operator $U\in\mathbb{B}_A(\mathcal{H})$.
  \item [(4)]  be $A$-increasing if for every $A$-selfadjoint operators $T,S\in\mathbb{B}_{A}(\mathcal{H})$ ,
  $$T\geq_AS \Longrightarrow N_A(T)\geq N_A(S).$$
  \item [(5)] satisfy   $A$-power property if $N_A(T^n)=N_A^n(T)$ for every $A$-selfadjoint operator $T$ and every positive integer $n$.
\end{itemize}
\end{definition}

\noindent In the following proposition we prove two basic properties related to $\omega_{N_A}(\cdot)$.
\begin{proposition}\label{0p1}
	Let $T\in \mathbb{B}_A(\mathcal{H})$. If $N_A(\cdot)$ is $A$-selfadjoint invariant, then 
\begin{equation}\label{prop1p}
\omega_{N_A}(T)=\omega_{N_A}(T^{\sharp_A}).
\end{equation}
If, in addition, $N_A(\cdot)$ is $A$-weakly unitarily invariant, then so is $\omega_{N_A}(\cdot)$.
\end{proposition}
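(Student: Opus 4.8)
The plan is to establish the two claimed equalities directly from the definition \eqref{def0}, exploiting the behavior of $\Re_A(\cdot)$ under the $\sharp_A$-operation. First I would record the elementary but crucial fact that $\Re_A\big(e^{i\theta}T^{\sharp_A}\big)$ is, up to the $A$-selfadjoint invariance of $N_A$, interchangeable with $\Re_A\big(e^{-i\theta}T\big)$. Indeed, using $(T^{\sharp_A})^{\sharp_A}=P_{\overline{\mathcal{R}(A)}}TP_{\overline{\mathcal{R}(A)}}$ from Proposition \ref{diez}(i) together with \eqref{ddi}, one checks that
$$\Big(\Re_A\big(e^{i\theta}T^{\sharp_A}\big)\Big)^{\sharp_A}
=\frac{e^{-i\theta}(T^{\sharp_A})^{\sharp_A}+e^{i\theta}T^{\sharp_A}}{2}
=P_{\overline{\mathcal{R}(A)}}\,\Re_A\big(e^{-i\theta}T\big)\,P_{\overline{\mathcal{R}(A)}},$$
and since $N_A$ is $A$-selfadjoint invariant this gives $N_A\big(\Re_A(e^{i\theta}T^{\sharp_A})\big)=N_A\big(\Re_A(e^{-i\theta}T)\big)$ for every $\theta\in\mathbb{R}$. (Here one must be slightly careful about whether $N_A$ is insensitive to sandwiching by $P_{\overline{\mathcal{R}(A)}}$; this should follow because for $T\in\mathbb{B}_A(\mathcal H)$ the relevant quantities depend only on the action of $T$ on $\overline{\mathcal{R}(A)}$, but it is the point that needs the most care.)

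Given this, the first equality \eqref{prop1p} is immediate: taking the supremum over $\theta\in\mathbb{R}$ of $N_A\big(\Re_A(e^{i\theta}T^{\sharp_A})\big)=N_A\big(\Re_A(e^{-i\theta}T)\big)$ and reindexing $\theta\mapsto-\theta$ yields $\omega_{N_A}(T^{\sharp_A})=\sup_{\theta}N_A\big(\Re_A(e^{i\theta}T)\big)=\omega_{N_A}(T)$.

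For the second assertion, let $U\in\mathbb{B}_A(\mathcal H)$ be $A$-unitary and suppose $N_A$ is moreover $A$-weakly unitarily invariant. I would first verify the compatibility identity
$$\Re_A\big(e^{i\theta}U^{\sharp_A}TU\big)=U^{\sharp_A}\,\Re_A\big(e^{i\theta}T\big)\,U,$$
which rests on the facts $(U^{\sharp_A}TU)^{\sharp_A}=U^{\sharp_A}T^{\sharp_A}(U^{\sharp_A})^{\sharp_A}$ and, for an $A$-unitary $U$, $(U^{\sharp_A})^{\sharp_A}$ acting as $U$ on $\overline{\mathcal{R}(A)}$ (again via Proposition \ref{diez}(i)), so that $(U^{\sharp_A}TU)^{\sharp_A}=U^{\sharp_A}T^{\sharp_A}U$ in the relevant sense. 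Then $A$-weak unitary invariance of $N_A$ gives $N_A\big(\Re_A(e^{i\theta}U^{\sharp_A}TU)\big)=N_A\big(U^{\sharp_A}\Re_A(e^{i\theta}T)U\big)=N_A\big(\Re_A(e^{i\theta}T)\big)$ for every $\theta$, and taking the supremum over $\theta$ yields $\omega_{N_A}(U^{\sharp_A}TU)=\omega_{N_A}(T)$, i.e. $\omega_{N_A}(\cdot)$ is $A$-weakly unitarily invariant.

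The main obstacle I anticipate is the bookkeeping around the orthogonal projection $P_{\overline{\mathcal{R}(A)}}$: because $(T^{\sharp_A})^{\sharp_A}\neq T$ in general, every identity above only holds modulo compression to $\overline{\mathcal{R}(A)}$, and one must be sure that $N_A$ (being a seminorm tied to the $A$-geometry) does not see the difference between $T$ and $P_{\overline{\mathcal{R}(A)}}TP_{\overline{\mathcal{R}(A)}}$ when $T\in\mathbb{B}_A(\mathcal H)$. Once that insensitivity is pinned down—most cleanly by noting $\mathcal{R}(T^{\sharp_A})\subseteq\overline{\mathcal{R}(A)}$ and using \eqref{ddi}—the rest is a routine supremum-over-$\theta$ argument.
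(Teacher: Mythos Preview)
Your overall strategy is sound, but the place you yourself flag as ``the point that needs the most care'' is in fact a genuine gap as written. You assert that $N_A$ should be insensitive to compression by $P_{\overline{\mathcal{R}(A)}}$ because ``the relevant quantities depend only on the action of $T$ on $\overline{\mathcal{R}(A)}$''. This reasoning is not valid: $N_A$ is an \emph{arbitrary} seminorm with the $A$-selfadjoint invariance property, and nothing a priori ties it to the $A$-geometry in the way you suggest. The same issue recurs in your second part, where your claimed identity $\Re_A\big(e^{i\theta}U^{\sharp_A}TU\big)=U^{\sharp_A}\,\Re_A\big(e^{i\theta}T\big)\,U$ is simply false as operators (the right factor is $(U^{\sharp_A})^{\sharp_A}$, not $U$), and ``in the relevant sense'' does not by itself repair it.

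That said, your approach \emph{can} be made rigorous with one extra observation: applying $A$-selfadjoint invariance twice and Proposition~\ref{diez}(i) gives
\[
N_A(S)=N_A(S^{\sharp_A})=N_A\big((S^{\sharp_A})^{\sharp_A}\big)=N_A\big(P_{\overline{\mathcal{R}(A)}}\,S\,P_{\overline{\mathcal{R}(A)}}\big)
\]
for every $S\in\mathbb{B}_A(\mathcal{H})$, which is exactly the projection-insensitivity you need. With this in hand your computations go through. The paper, however, avoids the projection bookkeeping altogether by reversing the direction of the argument: for \eqref{prop1p} it computes $\big(\Re_A(e^{i\theta}T)\big)^{\sharp_A}=\Re_A(e^{-i\theta}T^{\sharp_A})$ \emph{exactly} (no projections appear, since only one $\sharp_A$ is taken), and for the unitary invariance it first uses \eqref{prop1p} to pass to $(U^{\sharp_A}TU)^{\sharp_A}=U^{\sharp_A}T^{\sharp_A}(U^{\sharp_A})^{\sharp_A}$, after which the identity $\big((T^{\sharp_A})^{\sharp_A}\big)^{\sharp_A}=T^{\sharp_A}$ makes all the $\sharp_A$-manipulations close up without any compression. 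This is cleaner and sidesteps the issue you identified.
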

\begin{proof}
One observes that,
	\begin{eqnarray*}
		\omega_{N_A}(T)&=& \sup_{\theta \in \mathbb{R}} N_A\left ( \Re_A \left(e^{\rm i \theta}T\right)\right)\\
		&=& \sup_{\theta \in \mathbb{R}} N_A  \left(\left ( \Re_A \left(e^{\rm i \theta}T\right)\right)^{\sharp_A}\right) \,\, \Big ( \text{since $N_A(\cdot)$ is $A$-selfadjoint invariant} \Big )\\
		&=& \sup_{\theta \in \mathbb{R}} N_A\left ( \Re_A \left(e^{-\rm i \theta}T^{\sharp_A}\right)\right)\\
		&=& \omega_{N_A}(T^{\sharp_A}).
	\end{eqnarray*}
Now, let $U$ be an $A$-unitary operator. By using \eqref{prop1p}, we see that
	\begin{align*}
		\omega_{N_A}(U^{\sharp_A}TU)
&=\omega_{N_A}\big(U^{\sharp_A}T^{\sharp_A}(U^{\sharp_A})^{\sharp_A}\big)\\
&= \sup_{\theta \in \mathbb{R}}\, N_A\left ( \Re_A \left(e^{\rm i \theta}\big(U^{\sharp_A}T^{\sharp_A}(U^{\sharp_A})^{\sharp_A}\big)\right)\right)\\
		&= \sup_{\theta \in \mathbb{R}}\,N_A\Big(U^{\sharp_A}\left( \Re_A \left(e^{\rm i \theta}T^{\sharp_A}\right)\right)(U^{\sharp_A})^{\sharp_A}\Big)\\
		&= \sup_{\theta \in \mathbb{R}}\,N_A\Big(U^{\sharp_A}\left( \Re_A \left(e^{\rm i \theta}T^{\sharp_A}\right)\right)^{\sharp_A} (U^{\sharp_A})^{\sharp_A}\Big)\\
		&=\sup_{\theta \in \mathbb{R}}\,N_A     \left (\Big(U^{\sharp_A}\left( \Re_A \left(e^{\rm i \theta}T^{\sharp_A}\right)\right)U \Big)^{\sharp_A} \right )\\
			&=\sup_{\theta \in \mathbb{R}}\,N_A     \Big(U^{\sharp_A}\left( \Re_A \left(e^{\rm i \theta}T^{\sharp_A}\right)\right)U \Big)
\end{align*}
where the last equality follows since $N_A(\cdot)$ is $A$-selfadjoint invariant. Furthermore, since $N_A(\cdot)$ is $A$-weakly unitarily invariant. So, we get
	\begin{align*}
		\omega_{N_A}(U^{\sharp_A}TU)
&=\sup_{\theta \in \mathbb{R}}\,N_A\Big( \Re_A \left(e^{\rm i \theta}T^{\sharp_A}\right)\Big).
	\end{align*}
Thus, in the virtue of \eqref{prop1p}, we obtain
$$\omega_{N_A}(U^{\sharp_A}TU)=\omega_{N_A}(T^{\sharp_A})=\omega_{N_A}(T).$$
Thus, the proof is complete.
\end{proof}

\noindent Our next result is stated as follows.
\begin{theorem}\label{th5p}
	Let $T\in \mathbb{B}_A(\mathcal{H})$. Then, $$\frac{ 1}{2}N_A(T)\leq \omega_{N_A}(T).$$
	In particular, if  $N_A(\cdot)$ is $A$-selfadjoint invariant, then
	\begin{equation}\label{666}
	\frac{1}{2}N_A(T)\leq \omega_{N_A}(T)\leq N_A(T).
	\end{equation}
\end{theorem}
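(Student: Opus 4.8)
The plan is to establish the two inequalities in \eqref{666} separately, with the lower bound $\frac{1}{2}N_A(T)\le \omega_{N_A}(T)$ holding for an arbitrary seminorm $N_A(\cdot)$ and the upper bound requiring the $A$-selfadjoint invariance hypothesis. For the lower bound, the natural approach is to exploit the decomposition $T = \Re_A(T) + {\rm i}\,\Im_A(T)$ together with the triangle inequality for the seminorm $N_A(\cdot)$. Indeed, by the definition of $\omega_{N_A}(\cdot)$ we have $\omega_{N_A}(T)\ge N_A(\Re_A(T))$ and, via \eqref{def1}, also $\omega_{N_A}(T)\ge N_A(\Im_A(T))$. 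Hence $2\omega_{N_A}(T)\ge N_A(\Re_A(T)) + N_A(\Im_A(T)) \ge N_A\big(\Re_A(T) + {\rm i}\,\Im_A(T)\big) = N_A(T)$, which gives $\frac{1}{2}N_A(T)\le \omega_{N_A}(T)$. (Alternatively, this is immediate from Theorem \ref{th1p} by dropping the nonnegative second summand.)

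For the upper bound, assume $N_A(\cdot)$ is $A$-selfadjoint invariant. Fix $\theta\in\mathbb{R}$ and write $\Re_A(e^{{\rm i}\theta}T) = \tfrac{1}{2}\big(e^{{\rm i}\theta}T + e^{-{\rm i}\theta}T^{\sharp_A}\big)$. Applying the triangle inequality for $N_A(\cdot)$ and homogeneity, we get
\begin{equation*}
N_A\big(\Re_A(e^{{\rm i}\theta}T)\big) \le \frac{1}{2}\Big(N_A\big(e^{{\rm i}\theta}T\big) + N_A\big(e^{-{\rm i}\theta}T^{\sharp_A}\big)\Big) = \frac{1}{2}\big(N_A(T) + N_A(T^{\sharp_A})\big).
\end{equation*}
Since $N_A(\cdot)$ is $A$-selfadjoint invariant, $N_A(T^{\sharp_A}) = N_A(T)$, so the right-hand side equals $N_A(T)$. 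As this bound is uniform in $\theta$, taking the supremum over $\theta\in\mathbb{R}$ yields $\omega_{N_A}(T)\le N_A(T)$, completing the proof of \eqref{666}.

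The argument is essentially routine; the only point requiring care is the invocation of $N_A(e^{{\rm i}\theta}T) = N_A(T)$ (absolute homogeneity of the seminorm under multiplication by the scalar $e^{{\rm i}\theta}$ of modulus one, already used implicitly in Remark \ref{r1}) and the fact that $N_A(\cdot)$ being $A$-selfadjoint invariant is precisely what converts $N_A(T^{\sharp_A})$ into $N_A(T)$. I do not anticipate a genuine obstacle here; the main subtlety is simply making sure the upper bound is not claimed without the invariance hypothesis, since in general $N_A(T^{\sharp_A})$ and $N_A(T)$ need not coincide.
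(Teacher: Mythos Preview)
Your proof is correct and follows essentially the same approach as the paper. The paper obtains the lower bound by invoking Remark \ref{r1} (i.e., Theorem \ref{th1p} with the nonnegative correction term dropped), which you explicitly note as the alternative route, and for the upper bound the paper carries out exactly the triangle-inequality computation you give, using $A$-selfadjoint invariance to convert $N_A(T^{\sharp_A})$ into $N_A(T)$.
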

\begin{proof} We only prove that $\omega_{N_A}(T)\leq N_A(T)$ if $N_A(\cdot)$ is $A$-selfadjoint invariant, since other inequalities follows trivially from Remark \ref{r1}. Let $N_A(\cdot)$ be $A$-selfadjoint invariant and let
	$\theta \in \mathbb{R}$. Then,
	\begin{eqnarray*}
		N_A\left (\Re_A(e^{\rm i \theta}T)   \right)=N_A\left(\frac{e^{i\theta}T+e^{-i\theta}T^{\sharp_A}}{2}\right)
		&\leq &\frac{1}{2} \left (N_A(e^{\rm i \theta}T)+ N_A(e^{-\rm i \theta}T^{\sharp_A})  \right )\\
		&=&\frac{1}{2} \left (N_A(T)+ N_A(T^{\sharp_A})  \right )\\
		&=& N_A(T).
	\end{eqnarray*}
	Therefore, by considering supremum over all $\theta \in \mathbb{R}$, we get the required inequality.
\end{proof}

\noindent Again, by using the maximum function of two real numbers, we  obtain another lower bound for $\omega_{N_A}(\cdot),$ assuming  $N_A(\cdot)$ to be submultiplicative.

\begin{theorem}\label{th3p}
	Let $T \in \mathbb{B}_A(\mathcal{H})$. If $N_A(\cdot)$ is submultiplicative, then
	\begin{eqnarray*}
		\omega_{N_A}(T)&\geq& \sqrt{\frac{1}{4} N_A\left ( T^{\sharp_A}T+TT^{\sharp_A}\right ) +  \frac{1}{2}\left|\, N_A^2( \Re_A(T) )- N_A^2( \Im_A(T) )\, \right|  }.
	\end{eqnarray*}
\end{theorem}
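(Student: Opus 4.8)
The plan is to re-run the mechanism of the proof of Theorem~\ref{th1p}, but applied to the \emph{squared} seminorms, using submultiplicativity of $N_A$ exactly once — to pass from $N_A$ of a square to the square of $N_A$.

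First I would note, just as in Theorem~\ref{th1p}, that setting $\theta=0$ in \eqref{def0} and in \eqref{def1} gives $\omega_{N_A}(T)\ge N_A(\Re_A(T))$ and $\omega_{N_A}(T)\ge N_A(\Im_A(T))$. Since all three quantities are nonnegative, squaring and invoking the identity $\max\{a,b\}=\frac{a+b}{2}+\frac{|a-b|}{2}$ with $a=N_A^2(\Re_A(T))$ and $b=N_A^2(\Im_A(T))$ yields
\[
\omega_{N_A}^2(T)\ \ge\ \frac{N_A^2(\Re_A(T))+N_A^2(\Im_A(T))}{2}+\frac{\big|N_A^2(\Re_A(T))-N_A^2(\Im_A(T))\big|}{2}.
\]
So it remains only to bound the first term on the right below by $\tfrac14 N_A\big(T^{\sharp_A}T+TT^{\sharp_A}\big)$.

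The key computation I would then carry out is the algebraic identity
\[
\Re_A(T)^2+\Im_A(T)^2=\left(\frac{T+T^{\sharp_A}}{2}\right)^2-\left(\frac{T-T^{\sharp_A}}{2}\right)^2=\frac{T^{\sharp_A}T+TT^{\sharp_A}}{2},
\]
obtained by expanding the two squares, the terms $T^2$ and $(T^{\sharp_A})^2$ cancelling. Since $\mathbb{B}_A(\mathcal{H})$ is a subalgebra, $\Re_A(T)^2$, $\Im_A(T)^2$, $T^{\sharp_A}T$ and $TT^{\sharp_A}$ all lie in $\mathbb{B}_A(\mathcal{H})$, so $N_A$ applies to each of them; combining absolute homogeneity, the triangle inequality and submultiplicativity of $N_A$ I obtain
\[
\tfrac12\,N_A\big(T^{\sharp_A}T+TT^{\sharp_A}\big)=N_A\big(\Re_A(T)^2+\Im_A(T)^2\big)\le N_A(\Re_A(T)^2)+N_A(\Im_A(T)^2)\le N_A^2(\Re_A(T))+N_A^2(\Im_A(T)).
\]
Plugging this into the previous display gives $\omega_{N_A}^2(T)\ge\tfrac14 N_A(T^{\sharp_A}T+TT^{\sharp_A})+\tfrac12\big|N_A^2(\Re_A(T))-N_A^2(\Im_A(T))\big|$, and taking square roots (all quantities being nonnegative) produces the asserted inequality.

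I do not expect a genuine obstacle here; the one point to watch is the \emph{direction} in which submultiplicativity is applied — it must bound $N_A$ of a square from above by the square of $N_A$, which is precisely what makes $N_A^2(\Re_A(T))+N_A^2(\Im_A(T))$ an upper bound for $\tfrac12 N_A(T^{\sharp_A}T+TT^{\sharp_A})$ and hence, after rearranging, a lower bound for the quantity we want. (With the same steps applied to $e^{\mathrm i\theta}T$ in place of $T$ and a supremum over $\theta$ taken in the last term, one would even get a formally sharper estimate, since $(e^{\mathrm i\theta}T)^{\sharp_A}(e^{\mathrm i\theta}T)=T^{\sharp_A}T$ and $(e^{\mathrm i\theta}T)(e^{\mathrm i\theta}T)^{\sharp_A}=TT^{\sharp_A}$.)
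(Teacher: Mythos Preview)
Your proof is correct and follows essentially the same route as the paper's: both start from $\omega_{N_A}^2(T)\ge\max\{N_A^2(\Re_A(T)),N_A^2(\Im_A(T))\}$, rewrite the maximum as an average plus half the absolute difference, use submultiplicativity in the form $N_A(X^2)\le N_A^2(X)$ and the triangle inequality to pass to $N_A\big([\Re_A(T)]^2+[\Im_A(T)]^2\big)$, and then invoke the identity $[\Re_A(T)]^2+[\Im_A(T)]^2=\tfrac12(T^{\sharp_A}T+TT^{\sharp_A})$. Your closing parenthetical remark about replacing $T$ by $e^{\mathrm i\theta}T$ and taking a supremum is exactly the content of the paper's Remark~\ref{r2}.
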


\begin{proof}
	Following the proof of Theorem \ref{th1p}, we have that
	\begin{eqnarray*}
		w^2_{N_A}(T)&\geq& \max \left\{ N_A^2( \Re_A(T) ),   N_A^2( \Im_A(T) ) \right\}\\
		&=& \frac{ N_A^2( \Re_A(T) )+   N_A^2( \Im_A(T) )}{2}+ \frac{  \left|  N_A^2( \Re_A(T) )-  N_A^2( \Im_A(T) )\right|}{2} \\
		&\geq & \frac{ N_A \left ( [\Re_A(T)]^2 \right )+ N_A \left ( [\Im_A(T)]^2 \right ) }{2}+ \frac{1}{2}\left|\, N_A^2( \Re_A(T) )- N_A^2( \Im_A(T) )\, \right|\\
		&\geq& \frac{ N_A \left (  [\Re_A(T)]^2 +[\Im_A(T)]^2  \right)}{2}+ \frac{1}{2}\left|\, N_A^2( \Re_A(T) )- N_A^2( \Im_A(T) )\, \right|\\
		&=& \frac{1}{4}  N_A\left ( T^{\sharp_A}T+TT^{\sharp_A}\right )+ \frac{1}{2}\left|\, N_A^2( \Re_A(T) )- N_A^2( \Im_A(T) )\, \right|.
	\end{eqnarray*}
This proves the desired inequality.	
\end{proof}

\begin{remark}\label{r2}
	For all $\theta \in \mathbb{R}$, we have, $\omega_{N_A}(e^{\rm i \theta}T)=\omega_{N_A}(T)$. Therefore, it follows from Theorem \ref{th3p} that
	\begin{eqnarray*}
		\omega_{N_A}(T)&\geq& \sqrt{\frac{1}{4} N_A\left ( T^{\sharp_A}T+TT^{\sharp_A}\right ) +  \frac{1}{2} \sup_{\theta \in \mathbb{R}} \left|\, N_A^2( \Re_A(e^{\rm i \theta}T) )- N_A^2( \Im_A(e^{\rm i \theta}T) )\, \right|  }.
	\end{eqnarray*}
\end{remark}

\noindent The next result is a generalization of the well known inequality obtained by Kittaneh \cite[Th. 1]{FK}.
\begin{theorem}
	Let $T\in \mathbb{B}_{A}(\mathcal{H})$ and let $N_A(\cdot)$ be submultiplicative. Then,
	\begin{equation*}\label{140}
	\frac{1}{2}\sqrt{N_A\left(T^{\sharp_A} T+TT^{\sharp_A}\right)}\le  \omega_{N_A}\left(T\right).
	\end{equation*}
	Moreover, if $N_A(\cdot)$ is $A$-increasing and satisfies $A$-power property, then
	\begin{equation*}\label{141}
	\frac{1}{2}\sqrt{N_A\left(T^{\sharp_A} T+TT^{\sharp_A}\right)}\le  \omega_{N_A}\left(T\right)\leq \frac{\sqrt{2}}{2}\sqrt{N_A\left(T^{\sharp_A} T+TT^{\sharp_A}\right)}.
	\end{equation*}
\end{theorem}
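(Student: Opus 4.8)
The lower bound is the easy half: it is an immediate consequence of the previous theorem (Theorem~\ref{th3p}), since taking the trivial estimate $\left|N_A^2(\Re_A(T))-N_A^2(\Im_A(T))\right|\ge 0$ there yields
$$\omega_{N_A}(T)\ge\sqrt{\tfrac14 N_A\!\left(T^{\sharp_A}T+TT^{\sharp_A}\right)}=\tfrac12\sqrt{N_A\!\left(T^{\sharp_A}T+TT^{\sharp_A}\right)}.$$
So the work is entirely in the upper bound, and here I would follow the scheme of Kittaneh's original proof of \cite[Th.~1]{FK}, transplanted to the semi-Hilbertian setting. The starting point is that for each $\theta\in\mathbb{R}$ the operator $\Re_A(e^{i\theta}T)=\tfrac{e^{i\theta}T+e^{-i\theta}T^{\sharp_A}}{2}$ is $A$-selfadjoint, so by Proposition~\ref{kk1}(iii) its square is $A$-positive, and hence (since $N_A(\cdot)$ is $A$-increasing) $N_A$ is monotone on such squares.

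The key algebraic identity is the parallelogram-type relation
$$\left(\Re_A(e^{i\theta}T)\right)^2+\left(\Im_A(e^{i\theta}T)\right)^2=\frac{T^{\sharp_A}T+TT^{\sharp_A}}{2},$$
which one checks by expanding both sides using $(T^{\sharp_A})^{\sharp_A}=P_{\overline{\mathcal{R}(A)}}TP_{\overline{\mathcal{R}(A)}}$ together with \eqref{ddi} to clean up the projection factors (the cross terms $e^{2i\theta}T^2$ and $e^{-2i\theta}(T^{\sharp_A})^2$ cancel). From this, since both squares are $A$-positive and hence $\left(\Re_A(e^{i\theta}T)\right)^2\le_A\tfrac12\big(T^{\sharp_A}T+TT^{\sharp_A}\big)$, the $A$-increasing property gives
$$N_A\!\left(\left(\Re_A(e^{i\theta}T)\right)^2\right)\le \tfrac12\,N_A\!\left(T^{\sharp_A}T+TT^{\sharp_A}\right).$$
Now invoke the $A$-power property on the $A$-selfadjoint operator $\Re_A(e^{i\theta}T)$: $N_A\!\left(\left(\Re_A(e^{i\theta}T)\right)^2\right)=N_A^2\!\left(\Re_A(e^{i\theta}T)\right)$. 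Combining the last two displays and taking square roots,
$$N_A\!\left(\Re_A(e^{i\theta}T)\right)\le\frac{1}{\sqrt2}\sqrt{N_A\!\left(T^{\sharp_A}T+TT^{\sharp_A}\right)}=\frac{\sqrt2}{2}\sqrt{N_A\!\left(T^{\sharp_A}T+TT^{\sharp_A}\right)},$$
and taking the supremum over $\theta\in\mathbb{R}$ delivers the upper bound on $\omega_{N_A}(T)$.

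The main obstacle I anticipate is the identity-plus-ordering step: verifying that $\left(\Re_A(e^{i\theta}T)\right)^2+\left(\Im_A(e^{i\theta}T)\right)^2$ really collapses to $\tfrac12(T^{\sharp_A}T+TT^{\sharp_A})$ without spurious $P_{\overline{\mathcal{R}(A)}}$ factors requires care, since in general $(T^{\sharp_A})^{\sharp_A}\ne T$; one must use that $T^{\sharp_A}$ and $T^{\sharp_A}T$, $TT^{\sharp_A}$ already have ranges inside $\overline{\mathcal{R}(A)}$, so the projections act as identities where needed (Proposition~\ref{diez}(i) and \eqref{ddi}). A secondary subtlety is that $A$-positivity of both summands is needed to pass from the identity to the inequality $\left(\Re_A(e^{i\theta}T)\right)^2\le_A\tfrac12(T^{\sharp_A}T+TT^{\sharp_A})$, which is exactly where Proposition~\ref{kk1}(iii) enters. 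Everything else is a routine application of the assumed properties of $N_A(\cdot)$.
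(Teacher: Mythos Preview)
Your proof is correct and follows essentially the same approach as the paper: the lower bound via Theorem~\ref{th3p}/Remark~\ref{r2}, and the upper bound via the identity $[\Re_A(e^{i\theta}T)]^2+[\Im_A(e^{i\theta}T)]^2=\tfrac12(T^{\sharp_A}T+TT^{\sharp_A})$ combined with Proposition~\ref{kk1}(iii), the $A$-increasing hypothesis, and the $A$-power property, in exactly the order you describe. One remark: your anticipated obstacle with $P_{\overline{\mathcal{R}(A)}}$ factors does not actually arise, because $\Re_A(e^{i\theta}T)$ and $\Im_A(e^{i\theta}T)$ are linear combinations of $T$ and $T^{\sharp_A}$, so squaring and adding produces only $T^2$, $(T^{\sharp_A})^2$, $TT^{\sharp_A}$, $T^{\sharp_A}T$ --- the operator $(T^{\sharp_A})^{\sharp_A}$ never appears and the identity is a straightforward algebraic cancellation.
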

\begin{proof}
	We only prove that $\omega_{N_A}\left(T\right)\leq \frac{\sqrt{2}}{2}\sqrt{N_A\left(T^{\sharp_A} T+TT^{\sharp_A}\right)}$ when $N_A(\cdot)$ is $A$-increasing and satisfies $A$-power property, other inequality follows from Remark \ref{r2}.
	Let $\theta \in \mathbb{R}$.
Clearly, $\Im_A(e^{i\theta}T)$ is an $A$-selfadjoint operator. Therefore, it follows from Proposition \ref{kk1} (iii) that $[\Im_A(e^{i\theta}T)]^2\geq_A 0$. So, we infer that
	\begin{align*}
	\frac{1}{2}\left(TT^{\sharp_A} + T^{\sharp_A} T\right)-\left[\Re_A(e^{i\theta}T)\right]^2=\left[\Im_A(e^{i\theta}T)\right]^2\geq_A 0.
	\end{align*}
	This implies that
	\begin{align*}
	\frac{1}{2}\left(TT^{\sharp_A} + T^{\sharp_A} T\right)\geq_A\left[\Re_A(e^{i\theta}T)\right]^2.
	\end{align*}
	Since $N_A(\cdot)$ is $A$-increasing, so we get
	\begin{align*}
	N_A\Big(\left[\Re_A(e^{i\theta}T)\right]^2\Big)\leq \frac{1}{2}N_A\left(TT^{\sharp_A} + T^{\sharp_A} T\right).
	\end{align*}
Also, since $N_A(\cdot)$ satisfies $A$-power property, so we have
	\begin{align*}
	N_A^2\Big(\Re_A(e^{i\theta}T)\Big)\leq \frac{1}{2}N_A\left(TT^{\sharp_A} + T^{\sharp_A} T\right).
	\end{align*}
Now by taking supremum over all $\theta\in \mathbb{R}$, we obtain that
	\begin{align*}\label{n2}
	\omega_{N_A}^2(T)\leq\frac{1}{2}N_A\left(TT^{\sharp_A} + T^{\sharp_A} T\right).
	\end{align*}
This completes the proof.
\end{proof}

\noindent Next, we give a complete characterization for $\frac{1}{2}\sqrt{N_A\left ( T^{\sharp_A}T+TT^{\sharp_A}\right ) }=\omega_{N_A}(T).$

\begin{theorem}\label{th4p}
	Let $T\in \mathbb{B}_A(\mathcal{H})$. If  $N_A(\cdot)$ is submultiplicative, then the following conditions are equivalent.\\
	(i) $\omega_{N_A}(T)=\frac{1}{2}\sqrt{N_A\left ( T^{\sharp_A}T+TT^{\sharp_A}\right ) }$.\\
	(ii) $N_A\left ( \Re_A(e^{\rm i \theta }T \right )=N_A\left ( \Im_A(e^{\rm i \theta }T \right )=\frac{1}{2}\sqrt{N_A\left ( T^{\sharp_A}T+TT^{\sharp_A}\right ) }$ for all $\theta \in \mathbb{R}.$
	
\end{theorem}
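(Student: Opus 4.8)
The plan is to prove (i) $\Longleftrightarrow$ (ii) by showing (ii) $\Rightarrow$ (i) is immediate and concentrating on (i) $\Rightarrow$ (ii), following the same strategy used in the proof of Theorem \ref{th2p} but now at the level of the squared seminorm. Assume $\omega_{N_A}(T)=\frac{1}{2}\sqrt{N_A(T^{\sharp_A}T+TT^{\sharp_A})}$. The starting point is Remark \ref{r2}, which gives
\begin{align*}
\omega_{N_A}^2(T)\geq \frac{1}{4} N_A\left(T^{\sharp_A}T+TT^{\sharp_A}\right) + \frac{1}{2}\sup_{\theta\in\mathbb{R}}\left|\,N_A^2(\Re_A(e^{\rm i\theta}T))-N_A^2(\Im_A(e^{\rm i\theta}T))\,\right|.
\end{align*}
Under the hypothesis the left-hand side equals $\frac14 N_A(T^{\sharp_A}T+TT^{\sharp_A})$, so the supremum term must vanish, forcing $N_A(\Re_A(e^{\rm i\theta}T))=N_A(\Im_A(e^{\rm i\theta}T))$ for every $\theta\in\mathbb{R}$.

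Next I would fix $\theta\in\mathbb{R}$ and run the chain of inequalities from the proof of Theorem \ref{th3p} in reverse, exactly as in Theorem \ref{th2p}. Write $R=\Re_A(e^{\rm i\theta}T)$ and $S=\Im_A(e^{\rm i\theta}T)$, so that $R^2+S^2=\tfrac12(T^{\sharp_A}T+TT^{\sharp_A})$ (a routine expansion using $(e^{\rm i\theta}T)^{\sharp_A}=e^{-\rm i\theta}T^{\sharp_A}$). Then
\begin{align*}
N_A^2(R)&=\max\{N_A^2(R),N_A^2(S)\}\geq \tfrac{1}{2}\big(N_A^2(R)+N_A^2(S)\big)\\
&\geq \tfrac{1}{2}\big(N_A(R^2)+N_A(S^2)\big)\geq \tfrac{1}{2}N_A(R^2+S^2)=\tfrac14 N_A(T^{\sharp_A}T+TT^{\sharp_A})=\omega_{N_A}^2(T),
\end{align*}
where the first equality uses $N_A(R)=N_A(S)$, the second inequality uses submultiplicativity of $N_A(\cdot)$, and the third uses the triangle inequality for the seminorm. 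On the other hand $N_A(R)=N_A(\Re_A(e^{\rm i\theta}T))\leq \omega_{N_A}(T)$ by definition of $\omega_{N_A}(\cdot)$, hence $N_A^2(R)\leq \omega_{N_A}^2(T)$. Combining the two gives $N_A^2(R)=\omega_{N_A}^2(T)=\tfrac14 N_A(T^{\sharp_A}T+TT^{\sharp_A})$, and since $N_A(R)=N_A(S)$ we conclude $N_A(\Re_A(e^{\rm i\theta}T))=N_A(\Im_A(e^{\rm i\theta}T))=\tfrac12\sqrt{N_A(T^{\sharp_A}T+TT^{\sharp_A})}$ for all $\theta$, which is (ii).

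The main obstacle, such as it is, is bookkeeping rather than conceptual: one must be careful that each inequality in the Theorem \ref{th3p} chain is genuinely being forced to equality (so that nothing is lost), and that the identity $[\Re_A(e^{\rm i\theta}T)]^2+[\Im_A(e^{\rm i\theta}T)]^2=\tfrac12(T^{\sharp_A}T+TT^{\sharp_A})$ is verified using the correct rules for the $\sharp_A$-adjoint, in particular that $(e^{\rm i\theta}T)^{\sharp_A}=e^{-\rm i\theta}T^{\sharp_A}$ and that $\Re_A$, $\Im_A$ produce $A$-selfadjoint operators whose squares are $A$-positive. A minor point worth flagging is that the argument only needs submultiplicativity of $N_A(\cdot)$ (used to pass from $N_A^2(R)$ to $N_A(R^2)$); no $A$-increasing or $A$-power hypothesis is required for this characterization, matching the statement of the theorem.
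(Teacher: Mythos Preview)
Your proof is correct and follows essentially the same sandwich strategy as the paper: both rely on the chain from Theorem~\ref{th3p} (triangle inequality plus submultiplicativity applied to $R^2+S^2=\tfrac12(T^{\sharp_A}T+TT^{\sharp_A})$) together with the trivial upper bound $N_A(\Re_A(e^{i\theta}T))\le\omega_{N_A}(T)$ to force all inequalities to equalities. The only cosmetic difference is that you first invoke Remark~\ref{r2} to obtain $N_A(R)=N_A(S)$ and then pin down the common value, whereas the paper runs the sandwich $\tfrac14 N_A(T^{\sharp_A}T+TT^{\sharp_A})\le\tfrac12\bigl(N_A^2(R)+N_A^2(S)\bigr)\le\omega_{N_A}^2(T)$ directly and deduces both equalities in one stroke from the fact that two numbers bounded above by $c$ with average $c$ must each equal $c$.
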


\begin{proof}
	We only prove (i) $\implies $ (ii), as	(ii)  $\implies$ (i) is trivial.  Let $\omega_{N_A}(T)=\sqrt{\frac{1}{4}N_A\left ( T^{\sharp_A}T+TT^{\sharp_A}\right ) }$. Then, for all $\theta\in \mathbb{R}$, we have
	\begin{eqnarray*}
		\frac{1}{4}N_A\left ( T^{\sharp_A}T+TT^{\sharp_A}\right ) &=& \frac{1}{2} N_A\left ( \left(\Re_A(e^{\rm i \theta}T)\right )^2+\left(\Im_A(e^{\rm i \theta}T)\right)^2 \right ) \\
		&\leq& \frac{1}{2} \left( N_A^2 \left ( \Re_A(e^{\rm i \theta}T) \right )+ N_A^2 \left ( \Im_A(e^{\rm i \theta}T)\right )\right)\\
		&\leq& \omega_{N_A}^2(T)\\
		&=& \frac{1}{4}N_A\left ( T^{\sharp_A}T+TT^{\sharp_A}\right ).
	\end{eqnarray*}
	Thus, $N_A^2\left ( \Re_A(e^{\rm i \theta }T )\right )=N_A^2\left ( \Im_A(e^{\rm i \theta }T )\right )=\omega_A^2(T)={\frac{1}{4}N_A\left ( T^{\sharp_A}T+TT^{\sharp_A}\right ) }$  for all $\theta \in \mathbb{R}$.
\end{proof}

\noindent Now, in the following theorem we obtain an upper bound for $\omega_{N_A}(T)$.

\begin{theorem}\label{th6p}
	Let $T\in \mathbb{B}_A(\mathcal{H})$. Then
	\[\omega_{N_A}(T) \leq \sqrt{  N_A^2\left(\Re_A(T )\right)+ N_A^2\left(\Im_A(T )\right)}.\]
\end{theorem}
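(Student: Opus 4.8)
The plan is to write $\Re_A(e^{i\theta}T)$ in terms of $\Re_A(T)$ and $\Im_A(T)$ using the rotation and then bound the resulting seminorm by the triangle inequality, optimizing the trigonometric coefficients. First I would record the elementary identity
$$\Re_A(e^{i\theta}T)=\frac{e^{i\theta}T+e^{-i\theta}T^{\sharp_A}}{2}=\cos\theta\,\Re_A(T)-\sin\theta\,\Im_A(T),$$
which one checks by expanding $e^{i\theta}=\cos\theta+i\sin\theta$ and collecting the $T$ and $T^{\sharp_A}$ terms (using $\Re_A(T)=\tfrac{T+T^{\sharp_A}}{2}$, $\Im_A(T)=\tfrac{T-T^{\sharp_A}}{2i}$, so that $iT = i\Re_A(T) - \Im_A(T)$ after the appropriate regrouping). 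Applying the seminorm $N_A(\cdot)$ and using subadditivity and absolute homogeneity gives
$$N_A\big(\Re_A(e^{i\theta}T)\big)\le |\cos\theta|\,N_A(\Re_A(T))+|\sin\theta|\,N_A(\Im_A(T)).$$

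Next I would apply the Cauchy--Schwarz inequality to the right-hand side, viewing it as the dot product of $(|\cos\theta|,|\sin\theta|)$ with $(N_A(\Re_A(T)),N_A(\Im_A(T)))$:
$$|\cos\theta|\,N_A(\Re_A(T))+|\sin\theta|\,N_A(\Im_A(T))\le \sqrt{\cos^2\theta+\sin^2\theta}\,\sqrt{N_A^2(\Re_A(T))+N_A^2(\Im_A(T))},$$
and since $\cos^2\theta+\sin^2\theta=1$ this bound is independent of $\theta$. Taking the supremum over $\theta\in\mathbb{R}$ on the left then yields $\omega_{N_A}(T)\le\sqrt{N_A^2(\Re_A(T))+N_A^2(\Im_A(T))}$, which is exactly the claimed inequality.

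The only point requiring a little care is the first step, the decomposition $\Re_A(e^{i\theta}T)=\cos\theta\,\Re_A(T)-\sin\theta\,\Im_A(T)$: one must make sure the coefficients of $T$ and $T^{\sharp_A}$ match on both sides after substituting $e^{\pm i\theta}=\cos\theta\pm i\sin\theta$. Everything else is a routine application of the seminorm axioms (subadditivity, $N_A(\lambda S)=|\lambda|N_A(S)$) followed by the scalar Cauchy--Schwarz inequality; no submultiplicativity, $A$-selfadjoint invariance, or positivity hypotheses on $N_A(\cdot)$ are needed, which is consistent with the absence of such assumptions in the statement of Theorem \ref{th6p}.
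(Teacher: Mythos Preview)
Your proof is correct and follows essentially the same approach as the paper's: both decompose $\Re_A(e^{i\theta}T)=\cos\theta\,\Re_A(T)-\sin\theta\,\Im_A(T)$, apply the triangle inequality to get $|\cos\theta|\,N_A(\Re_A(T))+|\sin\theta|\,N_A(\Im_A(T))$, and then use Cauchy--Schwarz with $\cos^2\theta+\sin^2\theta=1$. The only cosmetic difference is that the paper recasts the supremum over $\theta$ as a supremum over $\alpha^2+\beta^2=1$ before applying the same estimates.
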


\begin{proof}
	Let $\theta \in \mathbb{R}$. Then,
	$N_A\left (  \Re_A({e^{\rm i \theta}}T) \right )= N_A \left ( \alpha \Re_A(T)-\beta \Im_A(T) \right ),$
	where $\alpha=\cos\theta$ and $\beta=\sin \theta.$ Now,
	\begin{eqnarray*}
		\omega_{N_A}(T)&=&\sup_{\theta\in \mathbb{R}} N_A\left (  \Re_A(e^{e^{\rm i \theta}}T) \right )\\
		&=&\sup \left \{N_A(   \alpha \Re_A(T)+\beta \Im_A(T) ): \alpha, \beta \in \mathbb{R}, \alpha^2+\beta^2=1  \right \} .
	\end{eqnarray*}
	Therefore, for $ \alpha, \beta \in \mathbb{R}$,	we have
	\begin{eqnarray*}
		&& \sup_{\alpha^2+\beta^2=1}N_A(   \alpha \Re_A(T)+\beta \Im_A(T) )\\
		&\leq & \sup_{\alpha^2+\beta^2=1} \left ( | \alpha |N_A(  \Re_A(T) )+|\beta| N_A( \Im_A(T) ) \right )\\
		&\leq& \sup_{\alpha^2+\beta^2=1} \sqrt{ (\alpha^2+\beta^2) (N_A^2(  \Re_A(T) )+ N_A^2( \Im_A(T) ) )   }  \\
		&=&\sqrt{  N_A^2(  \Re_A(T) )+ N_A^2( \Im_A(T) )    }.
	\end{eqnarray*}
	This completes the proof.
\end{proof}

\begin{remark}
	For all $\theta \in \mathbb{R}$, we have $\omega_{N_A}(e^{\rm i \theta}T)=\omega_{N_A}(T)$. Therefore, it follows from Theorem \ref{th6p} that
	\begin{eqnarray*}
	\omega_{N_A}(T) \leq \inf_{\theta \in \mathbb{R}}\sqrt{  N_A^2\left(\Re_A(e^{\rm i \theta}T )\right)+ N_A^2\left(\Im_A(e^{\rm i \theta}T )\right)}.
	\end{eqnarray*}
\end{remark}

\noindent In the next two theorems, we establish upper bounds related to $\omega_{N_A}(\cdot)$ under suitable conditions.

\begin{theorem}\label{th9p}
	Let $T\in \mathbb{B}_A(\mathcal{H})$. If $N_A(\cdot)$ satisfies $A$-power property, then
	\[\omega_{N_A}(T)\leq \sqrt{  \frac{1}{2}\omega_{N_A}(T^2)+\frac{1}{4}N_A(T^{\sharp_A}T+TT^{\sharp_A})}.\]
\end{theorem}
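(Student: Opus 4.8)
The plan is to fix $\theta\in\mathbb{R}$, square the $A$-selfadjoint operator $C_\theta:=\Re_A(e^{i\theta}T)$, and then estimate $N_A$ of the outcome. First I would record that $C_\theta$ is indeed $A$-selfadjoint: since $AT^{\sharp_A}=T^{*}A=(AT)^{*}$, one has $AC_\theta=\tfrac12\big(e^{i\theta}AT+e^{-i\theta}(AT)^{*}\big)$, which is selfadjoint. Consequently the $A$-power property is applicable to $C_\theta$ and gives $N_A(C_\theta)^2=N_A(C_\theta^2)$.

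Next, using $(T^{2})^{\sharp_A}=(T^{\sharp_A})^{2}$ (which follows from $(TS)^{\sharp_A}=S^{\sharp_A}T^{\sharp_A}$), I would expand
\[
C_\theta^2=\frac14\big(e^{2i\theta}T^{2}+e^{-2i\theta}(T^{\sharp_A})^{2}+T^{\sharp_A}T+TT^{\sharp_A}\big)=\frac12\,\Re_A\big(e^{2i\theta}T^{2}\big)+\frac14\big(T^{\sharp_A}T+TT^{\sharp_A}\big).
\]
Applying the triangle inequality for the seminorm $N_A(\cdot)$ and then the definition of $\omega_{N_A}(T^{2})$ with the angle $2\theta$, I get
\[
N_A(C_\theta^2)\le\frac12\,N_A\big(\Re_A(e^{2i\theta}T^{2})\big)+\frac14\,N_A\big(T^{\sharp_A}T+TT^{\sharp_A}\big)\le\frac12\,\omega_{N_A}(T^{2})+\frac14\,N_A\big(T^{\sharp_A}T+TT^{\sharp_A}\big).
\]

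Combining the two displays yields $N_A(C_\theta)^2\le\frac12\,\omega_{N_A}(T^{2})+\frac14\,N_A\big(T^{\sharp_A}T+TT^{\sharp_A}\big)$ for every $\theta\in\mathbb{R}$; taking the supremum over $\theta$ (note $\omega_{N_A}(T)^2=\sup_{\theta}N_A(C_\theta)^2$ since all quantities are nonnegative) and extracting a square root finishes the proof. I do not anticipate a real obstacle: the only points needing attention are the $A$-selfadjointness of $C_\theta$ (which is exactly what makes the $A$-power property usable) and the routine remark that $T^{2}$, $T^{\sharp_A}T+TT^{\sharp_A}$ and $C_\theta^2$ all lie in $\mathbb{B}_A(\mathcal{H})$, since this set is a $\sharp_A$-closed subalgebra of $\mathbb{B}(\mathcal{H})$.
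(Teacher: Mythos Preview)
Your proof is correct and follows essentially the same route as the paper's: expand $\big(\Re_A(e^{i\theta}T)\big)^2$, apply the triangle inequality to obtain the bound $\tfrac12\omega_{N_A}(T^2)+\tfrac14 N_A(T^{\sharp_A}T+TT^{\sharp_A})$, invoke the $A$-power property, and take the supremum over $\theta$. You even supply a bit more detail than the paper (the verification that $C_\theta$ is $A$-selfadjoint and the identity $(T^2)^{\sharp_A}=(T^{\sharp_A})^2$), but the argument is the same.
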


\begin{proof}
For  $\theta \in \mathbb{R}$  we have,
	\begin{eqnarray*}
		N_A \left (\left[\Re_A\left(e^{\rm i \theta}T\right)\right]^2 \right )&=& N_A \left (\left(\frac{e^{\rm i \theta}T+e^{-\rm i \theta}T^{\sharp_A}}{2}\right)^2 \right )\\
		&=& N_A\left(\frac{2\Re_A\left(e^{2\rm i \theta}T^2\right)+T^{\sharp_A}T+TT^{\sharp_A}}{4}\right)\\
		&\leq& \frac{1}{2} N_A\left(\Re_A\left (e^{2\rm i \theta}T^2 \right)\right) +\frac{1}{4}N_A\left(T^{\sharp_A}T+TT^{\sharp_A}\right)\\
		&\leq& \frac{1}{2} \omega_{N_A} \left(T^2\right)  +\frac{1}{4}N_A\left(T^{\sharp_A}T+TT^{\sharp_A}\right).
	\end{eqnarray*}
	Since, $\Re_A\left(e^{\rm i \theta}T\right)$ is $A$-selfadjoint and $N_A(\cdot)$ satisfies $A$-power property, so
	\begin{eqnarray*}
		N_A^2 \left(\Re_A\left(e^{\rm i \theta}T\right)\right) 	&\leq& \frac{1}{2} \omega_{N_A} \left(T^2\right)  +\frac{1}{4}N_A\left(T^{\sharp_A}T+TT^{\sharp_A}\right).
	\end{eqnarray*}
	Taking supremum over all $\theta\in \mathbb{R}$, we get the required inequality.
\end{proof}

\begin{theorem} \label {theorem:upper7}
	Let $T \in \mathbb{B}_A(\mathcal{H}).$ If $N_A(\cdot)$ is $A$-increasing and satisfies $A$-power property, then
	\begin{equation*}\label{new0}
	\omega_{N_A}(T) \leq \left [ \frac{1}{8}N_A^2\big(T^{\sharp_A}T+TT^{\sharp_A}\big)+\frac{1}{2}\omega_{N_A}^2(T^2) \right]^{\frac{1}{4}}.
	\end{equation*}
\end{theorem}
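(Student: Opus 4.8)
The plan is to deduce this estimate from Theorem~\ref{th9p} together with a single elementary scalar inequality, so that no new operator-theoretic input is required. Since the hypotheses of the present theorem include the $A$-power property, Theorem~\ref{th9p} is available, and squaring its conclusion (both sides are nonnegative) gives
\[
\omega_{N_A}^2(T)\le \tfrac12\,\omega_{N_A}(T^2)+\tfrac14\,N_A\big(T^{\sharp_A}T+TT^{\sharp_A}\big).
\]
Both sides here are again nonnegative, so the next move is simply to square once more.

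Writing $x:=\omega_{N_A}(T^2)\ge 0$ and $y:=N_A\big(T^{\sharp_A}T+TT^{\sharp_A}\big)\ge 0$, squaring yields
\[
\omega_{N_A}^4(T)\le \Big(\tfrac12 x+\tfrac14 y\Big)^{2}=\tfrac14 x^2+\tfrac14 xy+\tfrac1{16}y^2 .
\]
The one nonroutine point is how to absorb the cross term $\tfrac14 xy$: the key observation is the scalar bound $xy\le x^2+\tfrac14 y^2$, which is nothing but $(x-\tfrac12 y)^2\ge 0$. Multiplying by $\tfrac14$ and substituting, the right-hand side collapses to $\tfrac12 x^2+\tfrac18 y^2$, that is,
\[
\omega_{N_A}^4(T)\le \tfrac12\,\omega_{N_A}^2(T^2)+\tfrac18\,N_A^2\big(T^{\sharp_A}T+TT^{\sharp_A}\big).
\]
Taking fourth roots (all quantities are nonnegative) produces exactly the asserted inequality.

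I do not anticipate any genuine obstacle: the only things to be careful about are that squaring is monotone only because both sides are $\ge 0$, and that the hypothesis actually used in this route is the $A$-power property (entering through Theorem~\ref{th9p}), the $A$-increasing assumption not being needed for the argument as organized above. Should one prefer a proof that does not quote Theorem~\ref{th9p}, the same conclusion follows by reproducing its per-angle estimate $N_A\big([\Re_A(e^{i\theta}T)]^2\big)\le \tfrac12 N_A\big(\Re_A(e^{2i\theta}T^2)\big)+\tfrac14 N_A\big(T^{\sharp_A}T+TT^{\sharp_A}\big)$ (which comes from the triangle inequality applied to the identity $[\Re_A(e^{i\theta}T)]^2=\tfrac14\big(2\Re_A(e^{2i\theta}T^2)+T^{\sharp_A}T+TT^{\sharp_A}\big)$), squaring it, invoking $N_A\big(\Re_A(e^{2i\theta}T^2)\big)\le\omega_{N_A}(T^2)$ and the $A$-power property in the form $N_A\big([\Re_A(e^{i\theta}T)]^2\big)=N_A^2\big(\Re_A(e^{i\theta}T)\big)$, applying the same scalar inequality, and finally taking the supremum over $\theta\in\mathbb{R}$.
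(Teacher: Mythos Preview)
Your argument is correct and takes a genuinely different route from the paper. The paper proceeds directly at the level of operators: it establishes the fourth-power identity
\[
[\Re_A(e^{i\theta}T)]^4+[\Im_A(e^{i\theta}T)]^4=\tfrac12[\Re_A(e^{2i\theta}T^2)]^2+\tfrac18(T^{\sharp_A}T+TT^{\sharp_A})^2,
\]
discards the $A$-positive term $[\Im_A(e^{i\theta}T)]^4$ to obtain an operator inequality, applies the \emph{$A$-increasing} hypothesis to pass to $N_A$, and then the $A$-power property to unwind the powers. Your approach instead bootstraps from Theorem~\ref{th9p}: you square its conclusion and absorb the cross term via the scalar bound $xy\le x^2+\tfrac14 y^2$. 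This is more economical and, as you observed, shows that the $A$-increasing hypothesis is not actually needed for the stated inequality; only the $A$-power property (through Theorem~\ref{th9p}) is used. The paper's argument, on the other hand, is self-contained and exhibits the operator identity underlying the bound, but at the cost of invoking an extra structural assumption on $N_A(\cdot)$.
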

\begin{proof}
	Let $\theta \in \mathbb{R}$. By a simple calculation, we see that
	\begin{align}\label{pintu}
	\left[\Re_A(e^{i\theta}T)\right]^4 + \left[\Im_A(e^{i\theta}T)\right]^4=\frac{1}{2}\left[\Re_A(e^{2i\theta} T^2)\right]^2+ \frac{1}{8}\left(T^{\sharp_A}T+TT^{\sharp_A}\right)^2.
	\end{align}
	Now, by applying Proposition \ref{kk1} (iii), we deduce that $\left[\Im_A(e^{i\theta}T)\right]^4\geq_A0$. Thus, \eqref{pintu} gives
	$$ \frac{1}{2}[\Re_A(e^{2i\theta} T^2)]^2+ \frac{1}{8}\left(T^{\sharp_A}T+TT^{\sharp_A}\right)^2\geq_A\left[\Re_A(e^{i\theta}T)\right]^4.$$
	Since $N_A(\cdot)$ is $A$-increasing, we get that
	\begin{align*}
	N_A\Big(\Re_A^4(e^{i\theta}T)\Big)
	&\leq N_A\Big(\frac{1}{2}\Re_A^2(e^{2i\theta} T^2)+ \frac{1}{8}\left(T^{\sharp_A}T+TT^{\sharp_A}\right)^2\Big)\\
	&\leq \frac{1}{2}N_A\Big(\Re_A^2(e^{2i\theta} T^2)\Big)+ \frac{1}{8}N_A\Big(\left(T^{\sharp_A}T+TT^{\sharp_A}\right)^2\Big).
	\end{align*}
	By applying the $A$-power property of $N_A(\cdot)$, we have that
	\begin{align*}
	N_A^4\Big(\Re_A(e^{i\theta}T)\Big)
	&\leq \frac{1}{2}N_A^2\Big(\Re_A(e^{2i\theta} T^2)\Big)+ \frac{1}{8}N_A^2\left(T^{\sharp_A}T+TT^{\sharp_A}\right)\\
	&\leq \frac{1}{8}N_A^2\big(T^{\sharp_A}T+TT^{\sharp_A}\big)+\frac{1}{2}\omega_{N_A}^2(T^2).
	\end{align*}
	Taking supremum over all $\theta \in \mathbb{R}$, we get
		\begin{equation*}
	\omega_{N_A}^4(T) \leq  \frac{1}{8}N_A^2\big(T^{\sharp_A}T+TT^{\sharp_A}\big)+\frac{1}{2}\omega_{N_A}^2(T^2),
	\end{equation*}
	as desired.
\end{proof}

\section{Inequalities for product and sum of operators}\label{s3}

\noindent Our starting point in the present section is the following two lemmas, which are needed in proving our next result.
\begin{lemma}\label{naself}
Let $T\in \mathbb{B}_A(\mathcal{H})$ be an $A$-selfadjoint operator. If  $N_A(\cdot)$ is A-selfadjoint invariant, then
\[\omega_{N_A}(T)= N_A(T).\]		
\end{lemma}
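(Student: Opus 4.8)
The plan is to transfer the problem from $T$ to its $A$-adjoint $T^{\sharp_A}$, which behaves like a bona fide selfadjoint operator, and then read off the value of $\omega_{N_A}$ directly from the definition. First I would invoke the two hypotheses separately. Since $N_A(\cdot)$ is $A$-selfadjoint invariant, Proposition~\ref{0p1} (formula~\eqref{prop1p}) yields $\omega_{N_A}(T)=\omega_{N_A}(T^{\sharp_A})$, and the same hypothesis gives $N_A(T^{\sharp_A})=N_A(T)$. Since $T$ is $A$-selfadjoint, Proposition~\ref{kk1}(i) supplies the crucial identity $(T^{\sharp_A})^{\sharp_A}=T^{\sharp_A}$. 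Thus it suffices to prove $\omega_{N_A}(T^{\sharp_A})=N_A(T^{\sharp_A})$.

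For this last equality I would compute with $S:=T^{\sharp_A}$. From $S^{\sharp_A}=S$ we get $\Re_A(S)=\frac{S+S^{\sharp_A}}{2}=S$ and $\Im_A(S)=\frac{S-S^{\sharp_A}}{2i}=0$, hence for every $\theta\in\mathbb{R}$
$$\Re_A(e^{i\theta}S)=\cos\theta\,\Re_A(S)-\sin\theta\,\Im_A(S)=\cos\theta\,S .$$
Therefore $N_A\big(\Re_A(e^{i\theta}S)\big)=|\cos\theta|\,N_A(S)$, and taking the supremum over $\theta\in\mathbb{R}$ (equal to $1$, attained at $\theta=0$) gives $\omega_{N_A}(S)=N_A(S)$. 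Chaining the equalities then produces $\omega_{N_A}(T)=\omega_{N_A}(T^{\sharp_A})=N_A(T^{\sharp_A})=N_A(T)$, as desired.

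The only point requiring care — and the reason for passing through $T^{\sharp_A}$ rather than arguing with $T$ itself — is that for an $A$-selfadjoint $T$ one need not have $\Re_A(T)=T$, because $T^{\sharp_A}$ need not equal $T$ (the two differ by an operator with range contained in $\mathcal{N}(A)$, so its $\|\cdot\|_A$-seminorm vanishes, but its $N_A$-value is a priori uncontrolled). Hence one cannot simply evaluate the supremum at $\theta=0$ for $T$. By contrast, $T^{\sharp_A}$ does satisfy $(T^{\sharp_A})^{\sharp_A}=T^{\sharp_A}$, which is exactly what makes the computation above go through; as an extra consistency check, the bound $\omega_{N_A}(T)\le N_A(T)$ is in any case already contained in Theorem~\ref{th5p} (inequality~\eqref{666}).
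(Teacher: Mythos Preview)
Your proof is correct and follows essentially the same route as the paper: pass from $T$ to $T^{\sharp_A}$ via \eqref{prop1p}, use Proposition~\ref{kk1}(i) to get $(T^{\sharp_A})^{\sharp_A}=T^{\sharp_A}$, compute $\Re_A(e^{i\theta}T^{\sharp_A})=\cos\theta\,T^{\sharp_A}$ and take the supremum, then return to $N_A(T)$ by $A$-selfadjoint invariance. Your closing remark explaining why one cannot argue directly with $T$ (since $T^{\sharp_A}\neq T$ in general) is a nice clarification that the paper leaves implicit.
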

\begin{proof}
Since $N_A(\cdot)$ is $A$-selfadjoint invariant, then so is $\omega_{N_A}(\cdot)$. Hence,
	\begin{align*}
\omega_{N_A}(T)
&= \omega_{N_A}(T^{\sharp_A})\\
&=\displaystyle{\sup_{\theta \in \mathbb{R}}}\;
N_A\left(\frac{e^{i\theta}T^{\sharp_A}+e^{-i\theta}(T^{\sharp_A})^{\sharp_A}}{2}\right)\\
&=\displaystyle{\sup_{\theta \in \mathbb{R}}}\;
N_A\left(\frac{e^{i\theta}T^{\sharp_A}+e^{-i\theta}T^{\sharp_A}}{2}\right),
	\end{align*}
where the last equality follows from Proposition \ref{kk1} (i) since $T$ is $A$-selfadjoint. So, we get
	\begin{align*}
\omega_{N_A}(T)
&=N_A(T^{\sharp_A})\;\displaystyle{\sup_{\theta \in \mathbb{R}}}\;
\left|\frac{e^{i\theta}+e^{-i\theta}}{2}\right|\\
&=N_A(T^{\sharp_A})=N_A(T).
	\end{align*}
\end{proof}

\begin{lemma}\label{naself2}
Let $T\in \mathbb{B}_A(\mathcal{H})$. If $N_A(\cdot)$ is $A$-selfadjoint invariant, then
\[\omega_{N_A}(T)=\omega_{N_A}(P_{\overline{\mathcal{R}(A)}}T)=\omega_{N_A}(TP_{\overline{\mathcal{R}(A)}}).\]		
\end{lemma}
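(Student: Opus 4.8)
The plan is to exploit the identity \eqref{ddi} from Proposition \ref{diez}(i), namely $P_{\overline{\mathcal{R}(A)}}T^{\sharp_A}=T^{\sharp_A}P_{\overline{\mathcal{R}(A)}}=T^{\sharp_A}$, together with the fact (which I would record first) that $P:=P_{\overline{\mathcal{R}(A)}}$ is an $A$-selfadjoint projection, so that in particular $P^{\sharp_A}=P$ and $P\in\mathbb{B}_A(\mathcal{H})$. The core observation is that for any $S\in\mathbb{B}_A(\mathcal{H})$ one has $(PS)^{\sharp_A}=S^{\sharp_A}P^{\sharp_A}=S^{\sharp_A}P=S^{\sharp_A}$, where the last step uses \eqref{ddi}. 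Hence
$$\Re_A(e^{i\theta}PT)=\frac{e^{i\theta}PT+e^{-i\theta}(PT)^{\sharp_A}}{2}=\frac{e^{i\theta}PT+e^{-i\theta}T^{\sharp_A}}{2}.$$

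First I would handle $\omega_{N_A}(PT)$. Using the display above and $P^2=P$, one checks that $\Re_A(e^{i\theta}PT)=P\,\Re_A(e^{i\theta}T)$; indeed $P(e^{i\theta}T+e^{-i\theta}T^{\sharp_A})/2=(e^{i\theta}PT+e^{-i\theta}PT^{\sharp_A})/2=(e^{i\theta}PT+e^{-i\theta}T^{\sharp_A})/2$ by \eqref{ddi} again. Now $\Re_A(e^{i\theta}T)$ is $A$-selfadjoint, hence by Proposition \ref{kk1}(i) it is fixed by $P$ on the right in the sense $\big(\Re_A(e^{i\theta}T)\big)^{\sharp_A}=\Re_A(e^{i\theta}T)$, and since an $A$-selfadjoint operator $R$ satisfies $PR=(R^{\sharp_A}P)^{\sharp_A}\!\cdots$ — more directly, $PR$ has the same $A$-adjoint $R P=R$ as $R$ up to the projection, and the cleanest route is: $PR$ and $R$ differ by something that is $A$-null. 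The point I want is $N_A(P\,\Re_A(e^{i\theta}T))=N_A(\Re_A(e^{i\theta}T))$, which will follow because $\Re_A(e^{i\theta}T)=P\,\Re_A(e^{i\theta}T)\,P$ by a double application of \eqref{ddi} to $T^{\sharp_A}$ and its image, combined with $\mathcal{R}(\Re_A(e^{i\theta}T)^{\sharp_A})\subseteq\overline{\mathcal{R}(A)}$; so $P\,\Re_A(e^{i\theta}T)=\Re_A(e^{i\theta}T)$ outright. Taking $N_A$ and then the supremum over $\theta$ gives $\omega_{N_A}(PT)=\omega_{N_A}(T)$.

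For the second equality $\omega_{N_A}(TP)=\omega_{N_A}(T)$, I would use Proposition \ref{0p1}: since $N_A(\cdot)$ is $A$-selfadjoint invariant, $\omega_{N_A}(TP)=\omega_{N_A}((TP)^{\sharp_A})=\omega_{N_A}(P^{\sharp_A}T^{\sharp_A})=\omega_{N_A}(PT^{\sharp_A})$, and by the first part (applied to $T^{\sharp_A}$ in place of $T$) this equals $\omega_{N_A}(T^{\sharp_A})$, which equals $\omega_{N_A}(T)$ again by Proposition \ref{0p1}. This reduces the right-multiplication case entirely to the left-multiplication case already settled.

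The main obstacle is the bookkeeping in the first step: verifying cleanly that $P\,\Re_A(e^{i\theta}T)=\Re_A(e^{i\theta}T)$ (equivalently that every $A$-selfadjoint operator in $\mathbb{B}_A(\mathcal{H})$ is fixed under left multiplication by $P_{\overline{\mathcal{R}(A)}}$). This is really just Proposition \ref{kk1}(i) combined with $\mathcal{R}(R^{\sharp_A})\subseteq\overline{\mathcal{R}(A)}$ and \eqref{ddi}, but one must be careful that it is the \emph{range} closure condition, not injectivity of $A$, that does the work — so I would isolate this as a one-line sub-claim and prove it before assembling the two equalities.
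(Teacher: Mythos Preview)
Your reduction of the $TP$ case to the $PT$ case via Proposition~\ref{0p1} is fine and matches the paper. The problem is in the $PT$ case: your key sub-claim that $P\,\Re_A(e^{i\theta}T)=\Re_A(e^{i\theta}T)$ ``outright'' --- equivalently, that every $A$-selfadjoint $R\in\mathbb{B}_A(\mathcal{H})$ satisfies $PR=R$ --- is false. Take any $A$ with nontrivial kernel and $T=I$; then $R=\Re_A(e^{i\theta}I)=(\cos\theta)I$ is $A$-selfadjoint, but $PR=(\cos\theta)P\neq(\cos\theta)I$. You are also misreading Proposition~\ref{kk1}(i): it asserts $(R^{\sharp_A})^{\sharp_A}=R^{\sharp_A}$, not $R^{\sharp_A}=R$; the latter requires $\mathcal{R}(R)\subseteq\overline{\mathcal{R}(A)}$, which $\Re_A(e^{i\theta}T)$ need not satisfy. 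The claim $R=PRP$ fails for the same reason.

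What \emph{is} true, and what actually makes the argument work, is the seminorm-level equality $N_A(PR)=N_A(R)$. This follows in one line from $A$-selfadjoint invariance and \eqref{ddi}: $(PR)^{\sharp_A}=R^{\sharp_A}P^{\sharp_A}=R^{\sharp_A}P=R^{\sharp_A}$, hence $N_A(PR)=N_A((PR)^{\sharp_A})=N_A(R^{\sharp_A})=N_A(R)$. (Your ``$A$-null'' remark can also be made rigorous this way: $(I-P)^{\sharp_A}=0$, so $((I-P)R)^{\sharp_A}=0$ and thus $N_A((I-P)R)=0$.) The paper avoids the whole issue by passing to $T^{\sharp_A}$ at the outset and computing $\omega_{N_A}(T)=\omega_{N_A}(T^{\sharp_A})$ directly, exploiting that \eqref{ddi} applies to $T^{\sharp_A}$ and $(T^{\sharp_A})^{\sharp_A}$ without ever needing $PT=T$. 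Either route works, but you must not claim operator-level equality where only the seminorm-level statement holds.
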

\begin{proof}
Since $N_A(\cdot)$ is $A$-selfadjoint invariant, then so is $\omega_{N_A}(\cdot)$. Thus, we get
	\begin{align*}
\omega_{N_A}(T)
&=\omega_{N_A}(T^{\sharp_A})\\
&=\displaystyle{\sup_{\theta \in \mathbb{R}}}\;
N_A\left(\frac{e^{i\theta}T^{\sharp_A}+e^{-i\theta}(T^{\sharp_A})^{\sharp_A}}{2}\right)\\
&=\displaystyle{\sup_{\theta \in \mathbb{R}}}\;
N_A\left(\frac{e^{i\theta}T^{\sharp_A}P_{\overline{\mathcal{R}(A)}}+e^{-i\theta}P_{\overline{\mathcal{R}(A)}}(T^{\sharp_A})^{\sharp_A}}{2}\right),\quad(\text{by }\eqref{ddi})\\
&=\displaystyle{\sup_{\theta \in \mathbb{R}}}\;
N_A\left(\frac{e^{i\theta}(P_{\overline{\mathcal{R}(A)}}T)^{\sharp_A}+e^{-i\theta}\big((P_{\overline{\mathcal{R}(A)}}T)^{\sharp_A}\big)^{\sharp_A}}{2}\right)\\
&=\omega_{N_A}\big((P_{\overline{\mathcal{R}(A)}}T)^{\sharp_A}\big)\\
&=\omega_{N_A}\big(P_{\overline{\mathcal{R}(A)}}T\big).
\end{align*}
On the other hand, we see that
$$\omega_{N_A}(TP_{\overline{\mathcal{R}(A)}})=\omega_{N_A}(P_{\overline{\mathcal{R}(A)}}T^{\sharp_A})=\omega_{N_A}(T^{\sharp_A})=\omega_{N_A}(T).$$
This completes the proof.
\end{proof}
\noindent Now, we are in a position to prove the following result.
\begin{theorem}\label{T.3.2}
Let $T, S\in\mathbb{B}_A(\mathcal{H})$. If $N_A(\cdot)$ is  $A$-selfadjoint invariant and submultiplicative, then
\begin{equation}\label{I.3.T.3.2}
\omega_{N_A}(TS) \leq N_A(T)\omega_{N_A}(S) + \frac{1}{2} \omega_{N_A}(TS \pm ST^{\sharp_A}),
\end{equation}
and
\begin{equation}\label{I.3.T.3.2b}
\omega_{N_A}(TS) \leq
N_A(S)\omega_{N_A}(T) + \frac{1}{2} \omega_{N_A}(TS \pm S^{\sharp_A}T).
\end{equation}
\end{theorem}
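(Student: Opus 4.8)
The plan is to prove \eqref{I.3.T.3.2} directly, for both choices of sign, and then to deduce \eqref{I.3.T.3.2b} from \eqref{I.3.T.3.2} by passing to the $A$-adjoint. Throughout put $P:=P_{\overline{\mathcal R(A)}}$.

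First I would fix $\theta\in\mathbb R$ and use the additivity of $\Re_A(\cdot)$ together with the trivial splitting $TS=\tfrac12(TS-ST^{\sharp_A})+\tfrac12(TS+ST^{\sharp_A})$ to write
\[
\Re_A\big(e^{i\theta}TS\big)=\tfrac12\,\Re_A\big(e^{i\theta}(TS-ST^{\sharp_A})\big)+\tfrac12\,\Re_A\big(e^{i\theta}(TS+ST^{\sharp_A})\big).
\]
Applying the seminorm $N_A(\cdot)$ and the triangle inequality, the first summand on the right is at once $\le\tfrac12\,\omega_{N_A}(TS-ST^{\sharp_A})$ straight from the definition of $\omega_{N_A}(\cdot)$, so the whole matter reduces to proving $N_A\big(\Re_A(e^{i\theta}(TS+ST^{\sharp_A}))\big)\le 2N_A(T)\,\omega_{N_A}(S)$. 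The key step is the operator identity
\[
\Re_A\big(e^{i\theta}(TS+ST^{\sharp_A})\big)=PT\,\Re_A(e^{i\theta}S)+\Re_A(e^{i\theta}S)\,T^{\sharp_A}+\tfrac12 e^{i\theta}(I-P)TS,
\]
which I would verify by expanding $2\Re_A(e^{i\theta}(TS+ST^{\sharp_A}))=e^{i\theta}(TS+ST^{\sharp_A})+e^{-i\theta}(TS+ST^{\sharp_A})^{\sharp_A}$, using $(XY)^{\sharp_A}=Y^{\sharp_A}X^{\sharp_A}$, the formula $(T^{\sharp_A})^{\sharp_A}=PTP$ of Proposition \ref{diez}(i), and $PS^{\sharp_A}=S^{\sharp_A}$ from \eqref{ddi}, and then regrouping. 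Now $(I-P)A=0$ forces $\big((I-P)TS\big)^{\sharp_A}=0$, hence $N_A\big((I-P)TS\big)=N_A(0)=0$ by the $A$-selfadjoint invariance of $N_A(\cdot)$; moreover $(PT)^{\sharp_A}=T^{\sharp_A}$, so $N_A(PT)=N_A(T^{\sharp_A})=N_A(T)$ again by $A$-selfadjoint invariance. Consequently, using the triangle inequality, submultiplicativity, $A$-selfadjoint invariance and $N_A(\Re_A(e^{i\theta}S))\le\omega_{N_A}(S)$,
\[
N_A\big(\Re_A(e^{i\theta}(TS+ST^{\sharp_A}))\big)\le 2\,N_A(T)\,N_A\big(\Re_A(e^{i\theta}S)\big)\le 2\,N_A(T)\,\omega_{N_A}(S),
\]
and taking the supremum over $\theta$ gives the ``$-$'' case of \eqref{I.3.T.3.2}. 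For the ``$+$'' case one simply exchanges the roles of the two summands in the first display: then $\tfrac12 N_A(\Re_A(e^{i\theta}(TS+ST^{\sharp_A})))\le\tfrac12\omega_{N_A}(TS+ST^{\sharp_A})$ is immediate, while the analogue of the key identity for $\Re_A(e^{i\theta}(TS-ST^{\sharp_A}))$ produces $\Im_A(e^{i\theta}S)$ in place of $\Re_A(e^{i\theta}S)$, which is controlled by $N_A(\Im_A(e^{i\theta}S))\le\omega_{N_A}(S)$ via \eqref{def1}.

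Finally, to obtain \eqref{I.3.T.3.2b} I would apply \eqref{I.3.T.3.2} to the pair $(S^{\sharp_A},T^{\sharp_A})$, both of which lie in $\mathbb B_A(\mathcal H)$ by Proposition \ref{diez}(i), and then simplify using Proposition \ref{0p1} and the $A$-selfadjoint invariance of $N_A(\cdot)$: indeed $\omega_{N_A}(TS)=\omega_{N_A}((TS)^{\sharp_A})=\omega_{N_A}(S^{\sharp_A}T^{\sharp_A})$, $N_A(S^{\sharp_A})=N_A(S)$, $\omega_{N_A}(T^{\sharp_A})=\omega_{N_A}(T)$, and $S^{\sharp_A}T^{\sharp_A}\pm T^{\sharp_A}(S^{\sharp_A})^{\sharp_A}=(TS\pm S^{\sharp_A}T)^{\sharp_A}$, so that \eqref{I.3.T.3.2} for $(S^{\sharp_A},T^{\sharp_A})$ turns into exactly \eqref{I.3.T.3.2b}. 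I expect the main obstacle to be the key operator identity above: in the semi-Hilbertian setting $(T^{\sharp_A})^{\sharp_A}\ne T$ in general, so one cannot factor $T$ out of $\Re_A(e^{i\theta}(TS+ST^{\sharp_A}))$ as cleanly as in the classical Hilbert-space argument, and the point is to absorb the resulting correction into the $N_A$-null term $(I-P)TS$ — which is precisely where the hypothesis of $A$-selfadjoint invariance (beyond submultiplicativity) is needed.
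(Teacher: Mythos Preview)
Your argument is correct, and it reaches the same conclusion as the paper, but the technical route differs. The paper first invokes Lemma~\ref{naself} to convert $N_A\big(\Re_A(e^{i\theta}TS)\big)$ into $\omega_{N_A}\big(\Re_A(e^{i\theta}TS)\big)$, then splits this \emph{asymmetrically} as $\omega_{N_A}\big(T\Re_A(e^{i\theta}S)+e^{-i\theta}\tfrac{S^{\sharp_A}T^{\sharp_A}-TS^{\sharp_A}}{2}\big)$ and applies the triangle inequality for $\omega_{N_A}$; the remainder is then massaged via Lemma~\ref{naself2} (absorbing projections inside $\omega_{N_A}$) into $\tfrac12\omega_{N_A}(TS-ST^{\sharp_A})$, and the ``$+$'' case follows by the substitution $T\mapsto -iT$. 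You instead split \emph{symmetrically} at the level of $N_A$ from the outset, so both summands $\Re_A(e^{i\theta}(TS\pm ST^{\sharp_A}))$ are already $A$-selfadjoint and Lemma~\ref{naself} is never needed; the semi-Hilbertian defect $(T^{\sharp_A})^{\sharp_A}=PTP\ne T$ is handled by the explicit correction term $\tfrac12 e^{i\theta}(I-P)TS$, which you kill using $((I-P)TS)^{\sharp_A}=0$ together with $A$-selfadjoint invariance. Both arguments use the same two hypotheses in the same essential places; your route is more self-contained (no auxiliary lemmas) at the cost of a slightly longer operator identity to verify, while the paper's route isolates the projection bookkeeping into reusable lemmas. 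Your derivation of \eqref{I.3.T.3.2b} from \eqref{I.3.T.3.2} via $(T,S)\mapsto (S^{\sharp_A},T^{\sharp_A})$ is exactly what the paper does.
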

\begin{proof}
Let $\theta \in \mathbb{R}$. Since $\Re_A\big(e^{i\theta}(TS)\big)$ is $A$-selfadjoint, then by applying Lemma \ref{naself}, we get
\begin{align*}
N_A\Big(\Re_A\big(e^{i\theta}(TS)\big)\Big)
&= \omega_{N_A}\Big(\Re_A\big(e^{i\theta}(TS)\big)\Big)\\
& = \omega_{N_A}\left(\frac{e^{i\theta}TS + e^{-i\theta}S^{\sharp_A}T^{\sharp_A}}{2}\right)\\
& = \omega_{N_A}\left(T\frac{e^{i\theta}S + e^{-i\theta}S^{\sharp_A}}{2} + e^{-i\theta}\frac{S^{\sharp_A}T^{\sharp_A} - TS^{\sharp_A}}{2}\right)\\
& \leq \omega_{N_A}\left(T\frac{e^{i\theta}S + e^{-i\theta}S^{\sharp_A}}{2}\right) + \omega_{N_A}\left(e^{-i\theta}\frac{S^{\sharp_A}T^{\sharp_A} - TS^{\sharp_A}}{2}\right)\\
& \leq N_A\left(T\frac{e^{i\theta}S + e^{-i\theta}S^{\sharp_A}}{2}\right) + \omega_{N_A}\left(e^{-i\theta}\frac{S^{\sharp_A}T^{\sharp_A} - TS^{\sharp_A}}{2}\right),
\end{align*}
where the last inequality follows from the second inequality in \eqref{666}. Moreover, since $N_A(\cdot)$ is submultiplicative, then we get
\begin{align*}
N_A\Big(\Re_A\big(e^{i\theta}(TS)\big)\Big)
& \leq N_A(T)N_A\big(\Re_A(e^{i\theta}S)\big) + \frac{1}{2}\omega_{N_A}(S^{\sharp_A}T^{\sharp_A} - TS^{\sharp_A})\\
& \leq N_A(T)\omega_{N_A}(S) + \frac{1}{2}\omega_{N_A}(S^{\sharp_A}T^{\sharp_A} - TS^{\sharp_A})\\
& = N_A(T)\omega_{N_A}(S) + \frac{1}{2}\omega_{N_A}(S^{\sharp_A}T^{\sharp_A} - TP_{\overline{\mathcal{R}(A)}}S^{\sharp_A})\\
& = N_A(T)\omega_{N_A}(S) + \frac{1}{2}\omega_{N_A}(S^{\sharp_A}T^{\sharp_A} - P_{\overline{\mathcal{R}(A)}}TP_{\overline{\mathcal{R}(A)}}S^{\sharp_A}),
\end{align*}
where the last equality follows by applying Lemma \ref{naself2}. Thus, we obtain
\begin{align*}
N_A\Big(\Re_A\big(e^{i\theta}(TS)\big)\Big)
& \leq N_A(T)\omega_{N_A}(S) + \frac{1}{2}\omega_{N_A}(TS - ST^{\sharp_A}).
\end{align*}
Therefore, by the supremum over all $\theta \in \mathbb{R}$ in the last inequality, we obtain
\begin{align}\label{I.1.T.3.2}
\omega_{N_A}(TS)\leq N_A(T)\omega_{N_A}(S) + \frac{1}{2}\omega_{N_A}(TS - ST^{\sharp_A}).
\end{align}
Further, by replacing $T$ by $-iT$ in \eqref{I.1.T.3.2}, we get
\begin{align}\label{I.2.T.3.2}
\omega_{N_A}(TS)
& \leq N_A(T)\omega_{N_A}(S) + \frac{1}{2}\omega_{N_A}(TS + ST^{\sharp_A}).
\end{align}
By combining \eqref{I.1.T.3.2} together with \eqref{I.2.T.3.2}, we prove \eqref{I.3.T.3.2} as required.

On the other hand, by replacing $T$ by $S^{\sharp_A}$ and $S$ by $T^{\sharp_A}$ in (\ref{I.3.T.3.2}), we see that
\begin{align*}
\omega_{N_A}(TS)
&= \omega_{N_A}\big((TS)^{\sharp_A}\big) = \omega_{N_A}(S^{\sharp_A}T^{\sharp_A})\\
& \leq N_A(S^{\sharp_A})\omega_{N_A}(T^{\sharp_A}) + \frac{1}{2} \omega_{N_A}\Big(S^{\sharp_A}T^{\sharp_A} \pm T^{\sharp_A}(S^{\sharp_A})^{\sharp_A}\Big)\\
& = N_A(S)\omega_{N_A}(T) + \frac{1}{2} \omega_{N_A}(TS \pm S^{\sharp_A}T).
\end{align*}
This gives \eqref{I.3.T.3.2b} as desired and so the proof is complete.
\end{proof}

\noindent The next result provides a natural generalization of another well known theorem proved by Fong and Holbrook (see \cite[Th. 3]{fong}).
\begin{theorem}\label{hook}
Let $T, S\in\mathbb{B}_{A}(\mathcal{H})$. If $N_A(\cdot)$ is $A$-selfadjoint invariant and submultiplicative, then
\begin{align*}
\omega_{N_A}(TS\pm ST^{\sharp_A}) \leq2N_A\big(T\big)\,\omega_{N_A}(S).
\end{align*}
\end{theorem}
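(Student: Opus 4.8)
The plan is to run, frequency by frequency in $\theta$, the classical Fong--Holbrook argument, and to absorb the only genuinely non-Hilbertian defect — the fact that $(T^{\sharp_A})^{\sharp_A}\neq T$ in general — into a single term that the seminorm $N_A(\cdot)$ does not see.

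First I would fix $\theta\in\mathbb{R}$, abbreviate $R:=\Re_A(e^{i\theta}S)=\tfrac{1}{2}\big(e^{i\theta}S+e^{-i\theta}S^{\sharp_A}\big)$, and expand $2\Re_A\big(e^{i\theta}(TS+ST^{\sharp_A})\big)=e^{i\theta}(TS+ST^{\sharp_A})+e^{-i\theta}\big(S^{\sharp_A}T^{\sharp_A}+(T^{\sharp_A})^{\sharp_A}S^{\sharp_A}\big)$, using $(TS+ST^{\sharp_A})^{\sharp_A}=S^{\sharp_A}T^{\sharp_A}+(T^{\sharp_A})^{\sharp_A}S^{\sharp_A}$. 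By Proposition~\ref{diez}~(i) together with \eqref{ddi} one has $(T^{\sharp_A})^{\sharp_A}S^{\sharp_A}=P_{\overline{\mathcal{R}(A)}}TP_{\overline{\mathcal{R}(A)}}S^{\sharp_A}=P_{\overline{\mathcal{R}(A)}}TS^{\sharp_A}$, so a term-by-term comparison with $T\big(e^{i\theta}S+e^{-i\theta}S^{\sharp_A}\big)+\big(e^{i\theta}S+e^{-i\theta}S^{\sharp_A}\big)T^{\sharp_A}$ should produce the identity
\[
\Re_A\big(e^{i\theta}(TS+ST^{\sharp_A})\big)=TR+RT^{\sharp_A}-\tfrac{1}{2}\,e^{-i\theta}\big(I-P_{\overline{\mathcal{R}(A)}}\big)TS^{\sharp_A},
\]
which is exactly the Hilbert space identity $\Re\big(e^{i\theta}(TS+ST^{*})\big)=T\Re(e^{i\theta}S)+\Re(e^{i\theta}S)T^{*}$ plus a correction term.

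Next I would dispose of that correction term. Since $A\big(I-P_{\overline{\mathcal{R}(A)}}\big)=0$, the defining equation $AX=\big(I-P_{\overline{\mathcal{R}(A)}}\big)^{*}A$ of its $A$-adjoint collapses to $AX=0$, whose unique solution with range inside $\overline{\mathcal{R}(A)}$ is $X=0$ (the range would lie in $\ker A\cap\overline{\mathcal{R}(A)}=\{0\}$); hence $\big(I-P_{\overline{\mathcal{R}(A)}}\big)^{\sharp_A}=0$, so $\big(\big(I-P_{\overline{\mathcal{R}(A)}}\big)TS^{\sharp_A}\big)^{\sharp_A}=0$, and the $A$-selfadjoint invariance of $N_A(\cdot)$ forces $N_A\big(\big(I-P_{\overline{\mathcal{R}(A)}}\big)TS^{\sharp_A}\big)=N_A(0)=0$. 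Applying the triangle inequality to the displayed identity, then submultiplicativity and a second use of $A$-selfadjoint invariance (so that $N_A(RT^{\sharp_A})\le N_A(R)N_A(T^{\sharp_A})=N_A(R)N_A(T)$), I expect to get
\[
N_A\big(\Re_A(e^{i\theta}(TS+ST^{\sharp_A}))\big)\le N_A(TR)+N_A(RT^{\sharp_A})\le 2\,N_A(T)\,N_A(R)\le 2\,N_A(T)\,\omega_{N_A}(S),
\]
and taking the supremum over $\theta\in\mathbb{R}$ settles the case of the $+$ sign. The $-$ sign then follows at once by replacing $T$ with $iT$: one has $(iT)S+S(iT)^{\sharp_A}=i(TS-ST^{\sharp_A})$, while $N_A(iT)=N_A(T)$ and $\omega_{N_A}(i\,\cdot)=\omega_{N_A}(\cdot)$ by Remark~\ref{r1}.

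The main obstacle, and essentially the only place where the semi-Hilbertian framework departs from the classical one, is the presence of $(T^{\sharp_A})^{\sharp_A}$ — equivalently, of the projection $P_{\overline{\mathcal{R}(A)}}$ — in the algebraic identity: the identity does not close up verbatim, and the whole argument hinges on recognising that the extra operator $\big(I-P_{\overline{\mathcal{R}(A)}}\big)TS^{\sharp_A}$ is annihilated by $N_A(\cdot)$, which is precisely where the hypothesis of $A$-selfadjoint invariance is needed (besides its role in symmetrising the two product estimates). Once this is in place, the remaining steps are a faithful transcription of the Fong--Holbrook proof.
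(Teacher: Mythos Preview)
Your proof is correct. The algebraic identity you derive is right: with $R=\Re_A(e^{i\theta}S)$ one indeed has
\[
\Re_A\big(e^{i\theta}(TS+ST^{\sharp_A})\big)=TR+RT^{\sharp_A}-\tfrac{1}{2}\,e^{-i\theta}\big(I-P_{\overline{\mathcal{R}(A)}}\big)TS^{\sharp_A},
\]
and your argument that the correction term has $N_A$-seminorm zero is valid, since $(I-P_{\overline{\mathcal{R}(A)}})A=0$ forces $(I-P_{\overline{\mathcal{R}(A)}})^{\sharp_A}=0$ and hence, by $A$-selfadjoint invariance, $N_A\big((I-P_{\overline{\mathcal{R}(A)}})TS^{\sharp_A}\big)=N_A(0)=0$.

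The paper, however, handles the defect $(T^{\sharp_A})^{\sharp_A}\neq T$ differently: instead of isolating a correction term and killing it, it first applies \eqref{prop1p} to replace $TS+ST^{\sharp_A}$ by its $A$-adjoint, and then expands $\Re_A\big(e^{i\theta}[TS+ST^{\sharp_A}]^{\sharp_A}\big)$. Because $((X^{\sharp_A})^{\sharp_A})^{\sharp_A}=X^{\sharp_A}$ for every $X\in\mathbb{B}_A(\mathcal{H})$, this yields the \emph{exact} identity
\[
\Re_A\big(e^{i\theta}[TS+ST^{\sharp_A}]^{\sharp_A}\big)=(T^{\sharp_A})^{\sharp_A}\,\Re_A(e^{i\theta}S^{\sharp_A})+\Re_A(e^{i\theta}S^{\sharp_A})\,T^{\sharp_A},
\]
with no remainder at all; submultiplicativity and $A$-selfadjoint invariance then finish as in your argument. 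So the paper's trick is to work entirely inside the range of the $\sharp_A$-map, where the Fong--Holbrook identity holds verbatim, whereas you stay with the original operator and show that the discrepancy is invisible to $N_A(\cdot)$. Both routes use $A$-selfadjoint invariance to neutralise the projection $P_{\overline{\mathcal{R}(A)}}$, but the paper hides it inside \eqref{prop1p} while you make the mechanism explicit; your version has the pedagogical merit of showing precisely \emph{where} and \emph{why} the semi-Hilbertian correction disappears.
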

\begin{proof}
We first prove that
\begin{equation}\label{initial1}
\omega_{N_A}(TS+ ST^{\sharp_A}) \leq2N_A\big(T\big)\,\omega_{N_A}(S).
\end{equation}
Since $N_A(\cdot)$ is $A$-selfadjoint invariant, so by applying \eqref{prop1p}, we get
\begin{align}\label{777aaaaa}
\omega_{N_A}(TS+ ST^{\sharp_A})
& = \omega_{N_A}([TS+ ST^{\sharp_A}]^{\sharp_A})\nonumber\\
 &=\displaystyle{\sup_{\theta \in \mathbb{R}}}\,N_A\Big(\Re_A\left(e^{i\theta}[TS+ ST^{\sharp_A}]^{\sharp_A}\right)\Big).
\end{align}
Furthermore, a short calculation shows that
$$\Re_A\left(e^{i\theta}[TS+ ST^{\sharp_A}]^{\sharp_A}\right)= (T^{\sharp_A})^{\sharp_A} \left[\Re_A\left(e^{i\theta}S^{\sharp_A}\right)\right]+ \left[\Re_A\left(e^{i\theta}S^{\sharp_A}\right)\right]T^{\sharp_A},$$
for every $\theta \in \mathbb{R}$. So, by using \eqref{777aaaaa} together with Proposition \ref{0p1} we get
\begin{align*}
 &\omega_{N_A}(TS+ ST^{\sharp_A})\\
 &=\displaystyle{\sup_{\theta \in \mathbb{R}}}\,N_A\Big((T^{\sharp_A})^{\sharp_A} \left[\Re_A\left(e^{i\theta}S^{\sharp_A}\right)\right]+ \left[\Re_A\left(e^{i\theta}S^{\sharp_A}\right)\right]T^{\sharp_A}\Big)\\
  &\leq N_A\big((T^{\sharp_A})^{\sharp_A} \big)\left(\displaystyle{\sup_{\theta \in \mathbb{R}}}\,N_A\Big(\Re_A\left(e^{i\theta}S^{\sharp_A}\right)\Big)\right)+
  N_A\big(T^{\sharp_A} \big)\left(\displaystyle{\sup_{\theta \in \mathbb{R}}}\,N_A\Big(\Re_A\left(e^{i\theta}S^{\sharp_A}\right)\Big)\right)
  \\
    &=2N_A\big(T\big)\,\omega_{N_A}(S^{\sharp_A})\\
&=2N_A\big(T\big)\,\omega_{N_A}(S).
\end{align*}
This proves \eqref{initial1}. By replacing $T$ by $iT$ in \eqref{initial1} we get
\begin{align}
\omega_{N_A}(TS-ST^{\sharp_A}) \leq2N_A\big(T\big)\,\omega_{N_A}(S).
\end{align}
Thus we get the desired inequality.
\end{proof}

\noindent The following corollary is an immediate consequence of Theorems \ref{T.3.2} and \ref{hook}.
\begin{corollary}\label{cor2p}
	Let $S,T\in \mathbb{B}_A(\mathcal{H})$. If $N_A(\cdot)$ is  $A$-selfadjoint invariant and submultiplicative, then
	\begin{eqnarray*}
		\omega_{N_A}(TS)
		&\leq & 2 \min \Big\{  \omega_{N_A}(T) N_A(S), \omega_{N_A}(S) N_A(T) \Big \} \\
		&\leq&  4 \omega_{N_A}(T)\omega_{N_A}(S).
	\end{eqnarray*}
\end{corollary}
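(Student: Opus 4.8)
The plan is to derive Corollary~\ref{cor2p} directly by combining the two estimates already at our disposal. From Theorem~\ref{T.3.2}, inequality \eqref{I.3.T.3.2} with the minus sign gives
\[
\omega_{N_A}(TS) \leq N_A(T)\omega_{N_A}(S) + \tfrac{1}{2}\,\omega_{N_A}(TS - ST^{\sharp_A}),
\]
and Theorem~\ref{hook} (with the minus sign) bounds $\omega_{N_A}(TS - ST^{\sharp_A}) \leq 2N_A(T)\,\omega_{N_A}(S)$. Substituting the second into the first yields $\omega_{N_A}(TS) \leq N_A(T)\omega_{N_A}(S) + N_A(T)\omega_{N_A}(S) = 2N_A(T)\,\omega_{N_A}(S)$. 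This handles one of the two terms inside the minimum.

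For the symmetric term, I would argue in exactly the same way using \eqref{I.3.T.3.2b} in place of \eqref{I.3.T.3.2}: it gives $\omega_{N_A}(TS) \leq N_A(S)\omega_{N_A}(T) + \tfrac{1}{2}\omega_{N_A}(TS - S^{\sharp_A}T)$, while Theorem~\ref{hook} applied to the pair $(S^{\sharp_A}, T^{\sharp_A})$ — together with the identity $\omega_{N_A}(TS) = \omega_{N_A}((TS)^{\sharp_A}) = \omega_{N_A}(S^{\sharp_A}T^{\sharp_A})$ coming from Proposition~\ref{0p1} — controls the commutator term by $2N_A(S)\,\omega_{N_A}(T)$. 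Hence $\omega_{N_A}(TS) \leq 2N_A(S)\,\omega_{N_A}(T)$ as well. Taking the minimum of the two bounds establishes the first displayed inequality of the corollary.

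The second displayed inequality is then immediate from Theorem~\ref{th5p}: since $N_A(\cdot)$ is $A$-selfadjoint invariant, \eqref{666} gives $N_A(S) \leq 2\,\omega_{N_A}(S)$ and $N_A(T) \leq 2\,\omega_{N_A}(T)$, so
\[
2\min\big\{\omega_{N_A}(T)N_A(S),\ \omega_{N_A}(S)N_A(T)\big\} \leq 2\,\omega_{N_A}(T)\cdot 2\,\omega_{N_A}(S) = 4\,\omega_{N_A}(T)\omega_{N_A}(S).
\]
Honestly, there is no real obstacle here — the corollary is a bookkeeping consequence of the two preceding theorems — so the only thing requiring care is making sure the $A$-adjoint manipulations (in particular the passage $\omega_{N_A}(TS) = \omega_{N_A}(S^{\sharp_A}T^{\sharp_A})$ and the substitution of $(S^{\sharp_A}, T^{\sharp_A})$ into Theorem~\ref{hook}) are applied with the correct roles, exactly as was already done at the end of the proof of Theorem~\ref{T.3.2}.
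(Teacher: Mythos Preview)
Your proof is correct and follows essentially the same approach as the paper: combine Theorem~\ref{T.3.2} with Theorem~\ref{hook} to obtain each of the two bounds inside the minimum, and then invoke \eqref{666} for the second inequality. The only minor imprecision is that the identity you cite, $\omega_{N_A}(TS)=\omega_{N_A}(S^{\sharp_A}T^{\sharp_A})$, is not literally the one needed for the commutator term---what you actually use is $\omega_{N_A}(TS-S^{\sharp_A}T)=\omega_{N_A}\big((TS-S^{\sharp_A}T)^{\sharp_A}\big)=\omega_{N_A}\big(S^{\sharp_A}T^{\sharp_A}-T^{\sharp_A}(S^{\sharp_A})^{\sharp_A}\big)$---but your reference to the end of the proof of Theorem~\ref{T.3.2} makes clear you have the right manipulation in mind.
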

\begin{proof}
The first inequality follows by applying Theorem \ref{T.3.2} together with Theorem \ref{hook}. On the other hand, the second inequality follows from the following facts: $N_A(S)\leq 2 \omega_{N_A}(S)$ and $N_A(T)\leq 2 \omega_{N_A}(T)$.	
\end{proof}

Our next inequality reads as follows.

\begin{theorem}
	Let $T,S,X\in \mathbb{B}_A(\mathcal{H})$. If $N_A(\cdot)$ is  $A$-selfadjoint invariant and submultiplicative, then
	\begin{equation*}
	\omega_{N_A}\big(TXS\pm S^{\sharp_A}XT^{\sharp_A}\big) \leq 2N_A(T)N_A(S)\omega_{N_A}(X).
	\end{equation*}
\end{theorem}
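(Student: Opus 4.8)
The plan is to imitate the structure of the proof of Theorem~\ref{hook}, treating $TXS \pm S^{\sharp_A}XT^{\sharp_A}$ as a ``sandwiched'' analogue of $TS\pm ST^{\sharp_A}$. First I would establish the version with the plus sign,
\begin{equation*}
\omega_{N_A}\big(TXS+ S^{\sharp_A}XT^{\sharp_A}\big) \leq 2N_A(T)N_A(S)\,\omega_{N_A}(X),
\end{equation*}
and then recover the minus sign by replacing $T$ with $iT$ (equivalently $S$ with $-iS$), exactly as at the end of Theorem~\ref{hook}.

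For the plus sign, since $N_A(\cdot)$ is $A$-selfadjoint invariant, so is $\omega_{N_A}(\cdot)$ by Proposition~\ref{0p1}, and hence $\omega_{N_A}(TXS+ S^{\sharp_A}XT^{\sharp_A})=\omega_{N_A}\big([TXS+ S^{\sharp_A}XT^{\sharp_A}]^{\sharp_A}\big)$. Using $(TXS)^{\sharp_A}=S^{\sharp_A}X^{\sharp_A}T^{\sharp_A}$ and $(S^{\sharp_A}XT^{\sharp_A})^{\sharp_A}=(T^{\sharp_A})^{\sharp_A}X^{\sharp_A}(S^{\sharp_A})^{\sharp_A}$, I would compute $\Re_A\big(e^{i\theta}[TXS+ S^{\sharp_A}XT^{\sharp_A}]^{\sharp_A}\big)$ and verify the key algebraic identity
\begin{equation*}
\Re_A\big(e^{i\theta}[TXS+ S^{\sharp_A}XT^{\sharp_A}]^{\sharp_A}\big)
= S^{\sharp_A}\Big[\Re_A\big(e^{i\theta}X^{\sharp_A}\big)\Big](T^{\sharp_A})^{\sharp_A}
+ (T^{\sharp_A})^{\sharp_A}\Big[\Re_A\big(e^{i\theta}X^{\sharp_A}\big)\Big](S^{\sharp_A})^{\sharp_A},
\end{equation*}
for every $\theta\in\mathbb{R}$; here I would lean on \eqref{ddi} and Proposition~\ref{diez}(i) to handle the projections $P_{\overline{\mathcal{R}(A)}}$ that arise when expanding $(S^{\sharp_A})^{\sharp_A}$ and $(T^{\sharp_A})^{\sharp_A}$, much as Lemma~\ref{naself2} was used in Theorem~\ref{T.3.2}.

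With this identity in hand, I would apply the triangle inequality and submultiplicativity of $N_A(\cdot)$ to each of the two terms, obtaining
\begin{equation*}
N_A\Big(\Re_A\big(e^{i\theta}[TXS+ S^{\sharp_A}XT^{\sharp_A}]^{\sharp_A}\big)\Big)
\leq 2\,N_A(S^{\sharp_A})\,N_A\big((T^{\sharp_A})^{\sharp_A}\big)\,N_A\Big(\Re_A\big(e^{i\theta}X^{\sharp_A}\big)\Big),
\end{equation*}
where I use that $N_A(\cdot)$ is $A$-selfadjoint invariant so $N_A(S^{\sharp_A})=N_A(S)$ and $N_A\big((T^{\sharp_A})^{\sharp_A}\big)=N_A(T^{\sharp_A})=N_A(T)$. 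Taking the supremum over $\theta\in\mathbb{R}$ gives $2N_A(T)N_A(S)\,\omega_{N_A}(X^{\sharp_A})=2N_A(T)N_A(S)\,\omega_{N_A}(X)$, again by Proposition~\ref{0p1}. Finally, replacing $T$ by $iT$ in the plus-sign inequality yields the minus-sign version, and combining the two completes the proof.

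The step I expect to be the main obstacle is verifying the displayed algebraic identity for $\Re_A\big(e^{i\theta}[TXS+ S^{\sharp_A}XT^{\sharp_A}]^{\sharp_A}\big)$ cleanly: one must carefully track where $P_{\overline{\mathcal{R}(A)}}$ appears, use $(TXS)^{\sharp_A}=S^{\sharp_A}X^{\sharp_A}T^{\sharp_A}$ together with the fact that $(T^{\sharp_A})^{\sharp_A}=P_{\overline{\mathcal{R}(A)}}TP_{\overline{\mathcal{R}(A)}}$ need not equal $T$, and then absorb the projections using $\omega_{N_A}(P_{\overline{\mathcal{R}(A)}}Z)=\omega_{N_A}(ZP_{\overline{\mathcal{R}(A)}})=\omega_{N_A}(Z)$ from Lemma~\ref{naself2} (or \eqref{ddi} directly). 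Everything after that identity is a routine application of the triangle inequality, submultiplicativity, and the invariance properties already recorded above.
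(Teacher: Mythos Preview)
Your approach is essentially the paper's own proof: pass to the $\sharp_A$ of $TXS+S^{\sharp_A}XT^{\sharp_A}$ via Proposition~\ref{0p1}, factor $\Re_A(e^{i\theta}\,\cdot\,)$ through the middle slot, bound by submultiplicativity and $A$-selfadjoint invariance, take the supremum, and then replace $S$ by $-iS$ (equivalently $T$ by $iT$) for the minus sign. Two small points: (i) the displayed identity should read $S^{\sharp_A}\big[\Re_A(e^{i\theta}X^{\sharp_A})\big]T^{\sharp_A}+(T^{\sharp_A})^{\sharp_A}\big[\Re_A(e^{i\theta}X^{\sharp_A})\big](S^{\sharp_A})^{\sharp_A}$, i.e.\ the first term ends in $T^{\sharp_A}$, not $(T^{\sharp_A})^{\sharp_A}$, since $((T^{\sharp_A})^{\sharp_A})^{\sharp_A}=T^{\sharp_A}$; and (ii) precisely because of that triple-$\sharp_A$ identity, no projection juggling or appeal to Lemma~\ref{naself2} is needed here---the algebra closes cleanly without $P_{\overline{\mathcal{R}(A)}}$ ever appearing.
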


\begin{proof}
It follows from Proposition \ref{0p1} that
	\begin{align*}\label{first550}
	&\omega_{N_A}\big(TXS+S^{\sharp_A}XT^{\sharp_A}\big)\nonumber\\
	&= \omega_{N_A}\Big(S^{\sharp_A}X^{\sharp_A}T^{\sharp_A}+(T^{\sharp_A})^{\sharp_A}X^{\sharp_A}(S^{\sharp_A})^{\sharp_A}\Big)\nonumber\\
	&=\sup_{\theta\in \mathbb{R}}N_A\Big(\Re_A\Big(e^{i\theta}\big(S^{\sharp_A}X^{\sharp_A}T^{\sharp_A}+(T^{\sharp_A})^{\sharp_A}X^{\sharp_A}(S^{\sharp_A})^{\sharp_A}\big) \Big)\Big).
	\end{align*}
	Moreover, for all $\theta\in \mathbb{R}$, we have
	\begin{align*}
	&\Re_A\Big(e^{i\theta}\big(S^{\sharp_A}X^{\sharp_A}T^{\sharp_A}+(T^{\sharp_A})^{\sharp_A}X^{\sharp_A}(S^{\sharp_A})^{\sharp_A}\big) \Big)\\
	&=S^{\sharp_A}\Re_A\big(e^{i\theta}X^{\sharp_A}\big)T^{\sharp_A}+(T^{\sharp_A})^{\sharp_A}\Re_A\big(e^{i\theta}X^{\sharp_A}\big)(S^{\sharp_A})^{\sharp_A}.
	\end{align*}
Now since $N_A(\cdot)$ is $A$-submultiplicative, so we have
	\begin{align*}
	&N_A\Big[\Re_A\Big(e^{i\theta}\big(S^{\sharp_A}X^{\sharp_A}T^{\sharp_A}+(T^{\sharp_A})^{\sharp_A}X^{\sharp_A}(S^{\sharp_A})^{\sharp_A}\big) \Big)\Big]\\
	&\leq N_A\big(S^{\sharp_A}\big)N_A\Big(\Re_A\big(e^{i\theta}X^{\sharp_A}\big)\Big) N_A\big(T^{\sharp_A}\big)\\
	 &\,\,\,\,\,\,\,\,\,\,\,\,\,\,\,\,\,\,\,\,\,\,\,+N_A\big((T^{\sharp_A})^{\sharp_A}\big)N_A\Big(\Re_A\big(e^{i\theta}X^{\sharp_A}\big)\Big)N_A\big((S^{\sharp_A})^{\sharp_A}\big)\\
	&\leq N_A\big(S^{\sharp_A}\big)\omega_{N_A}(X^{\sharp_A})N_A\big(T^{\sharp_A}\big)+N_A\big((T^{\sharp_A})^{\sharp_A}\big)\omega_{N_A}(X^{\sharp_A})N_A\big((S^{\sharp_A})^{\sharp_A}\big)\\
	&=2N_A(T)N_A(S)\omega_{N_A}(X),
	\end{align*}
	where the last equality follows from the fact that $N_A(\cdot)$ is $A$-selfadjoint invariant. By taking  supremum over all $\theta \in \mathbb{R}$, we get
	\begin{equation}\label{kitlaa}
	\omega_{N_A}\big(TXS+S^{\sharp_A}XT^{\sharp_A}\big) \leq 2N_A(T)N_A(S)\omega_{N_A}(X).
	\end{equation}
	Further, by replacing $S$ by $-iS$ in \eqref{kitlaa}, we get
	\begin{equation}\label{kitlaa1}
	\omega_{N_A}\big(TXS-S^{\sharp_A}XT^{\sharp_A}\big) \leq 2N_A(T)N_A(S)\omega_{N_A}(X).
	\end{equation}
	Combining \eqref{kitlaa} together with \eqref{kitlaa1}, we get 
	\begin{equation*}
	\omega_{N_A}\big(TXS\pm S^{\sharp_A}XT^{\sharp_A}\big) \leq 2N_A(T)N_A(S)\omega_{N_A}(X).
	\end{equation*}
	as required.
\end{proof}

Based on the above theorem we get the following corollary.

\begin{corollary}
	Let $T,S,X\in \mathbb{B}_A(\mathcal{H})$. If $N_A(\cdot)$ is  $A$-selfadjoint invariant and submultiplicative, then
	\begin{equation*}\label{s1}
	\omega_{N_A}\big(TXT^{\sharp_A}\big) \leq N_A^2(T)\omega_{N_A}(X),
	\end{equation*}
	and
	\begin{equation*}\label{s2}
	\omega_{N_A}\big(T^{\sharp_A}XT\big) \leq N_A^2(T)\omega_{N_A}(X),
	\end{equation*}
\end{corollary}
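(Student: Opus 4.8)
The plan is to obtain both inequalities as special cases of the preceding theorem, choosing the substitutions so that the two-term right-hand side of that theorem collapses into a single term, which is what removes the spurious factor $2$.

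For the first inequality I would apply the preceding theorem (with the $+$ sign) to the operators $T':=(T^{\sharp_A})^{\sharp_A}$, $S':=T^{\sharp_A}$ and $X':=X^{\sharp_A}$, all of which lie in $\mathbb{B}_A(\mathcal{H})$ by Proposition \ref{diez}(i). Since $((T^{\sharp_A})^{\sharp_A})^{\sharp_A}=T^{\sharp_A}$ (again Proposition \ref{diez}(i)), we get $(S')^{\sharp_A}=(T^{\sharp_A})^{\sharp_A}=T'$ and $(T')^{\sharp_A}=T^{\sharp_A}=S'$, so the two summands coincide:
\[
T'X'S'+(S')^{\sharp_A}X'(T')^{\sharp_A}=2\,(T^{\sharp_A})^{\sharp_A}X^{\sharp_A}T^{\sharp_A}.
\]
Because $N_A(\cdot)$ is $A$-selfadjoint invariant we have $N_A(T')=N_A((T^{\sharp_A})^{\sharp_A})=N_A(T^{\sharp_A})=N_A(T)=N_A(S')$, and $\omega_{N_A}(X')=\omega_{N_A}(X)$ by \eqref{prop1p}. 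Hence the theorem yields $\omega_{N_A}\big((T^{\sharp_A})^{\sharp_A}X^{\sharp_A}T^{\sharp_A}\big)\le N_A^2(T)\,\omega_{N_A}(X)$. Finally, since $(TXT^{\sharp_A})^{\sharp_A}=(T^{\sharp_A})^{\sharp_A}X^{\sharp_A}T^{\sharp_A}$ and $\omega_{N_A}(\cdot)$ is $A$-selfadjoint invariant (Proposition \ref{0p1}), the left-hand side equals $\omega_{N_A}(TXT^{\sharp_A})$, giving the first claim.

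For the second inequality I would simply note that $(T^{\sharp_A}XT)^{\sharp_A}=T^{\sharp_A}X^{\sharp_A}(T^{\sharp_A})^{\sharp_A}$, apply the first inequality with $T$ replaced by $T^{\sharp_A}$ and $X$ replaced by $X^{\sharp_A}$ (so that $N_A(T^{\sharp_A})=N_A(T)$ and $\omega_{N_A}(X^{\sharp_A})=\omega_{N_A}(X)$), and use the $A$-selfadjoint invariance of $\omega_{N_A}(\cdot)$ once more to pass from $\omega_{N_A}\big((T^{\sharp_A}XT)^{\sharp_A}\big)$ back to $\omega_{N_A}(T^{\sharp_A}XT)$.

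The only delicate point I anticipate is the bookkeeping with the map $\sharp_A$, which is not an involution in general: everything must be routed through the identity $((T^{\sharp_A})^{\sharp_A})^{\sharp_A}=T^{\sharp_A}$ and through the $A$-selfadjoint invariance hypotheses so that the two summands in the theorem genuinely become equal; a careless substitution (for instance $S=T^{\sharp_A}$ only, followed by the triangle inequality) merely recovers the bound up to a factor $2$ and is too weak. If one prefers a self-contained argument, the same result follows by repeating the computation in the proof of the preceding theorem for the single operator $(T^{\sharp_A})^{\sharp_A}X^{\sharp_A}T^{\sharp_A}$, via $\Re_A\big(e^{i\theta}(T^{\sharp_A})^{\sharp_A}X^{\sharp_A}T^{\sharp_A}\big)=(T^{\sharp_A})^{\sharp_A}\,\Re_A\big(e^{i\theta}X^{\sharp_A}\big)\,T^{\sharp_A}$, submultiplicativity of $N_A(\cdot)$, its $A$-selfadjoint invariance, and then taking the supremum over $\theta\in\mathbb{R}$.
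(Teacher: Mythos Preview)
Your proposal is correct and follows precisely the paper's own approach: the paper also obtains the first inequality by substituting $T\mapsto (T^{\sharp_A})^{\sharp_A}$, $S\mapsto T^{\sharp_A}$, $X\mapsto X^{\sharp_A}$ in \eqref{kitlaa} (so the two summands coincide and the factor $2$ cancels), and then deduces the second from the first by replacing $T$ and $X$ by $T^{\sharp_A}$ and $X^{\sharp_A}$. Your write-up is in fact more explicit than the paper's, spelling out the needed use of $((T^{\sharp_A})^{\sharp_A})^{\sharp_A}=T^{\sharp_A}$ and of the $A$-selfadjoint invariance of $\omega_{N_A}(\cdot)$ from Proposition~\ref{0p1} to pass from $(TXT^{\sharp_A})^{\sharp_A}$ back to $TXT^{\sharp_A}$.
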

\begin{proof}
	First inequality follows by replacing $X$, $T$ and $S$ by $X^{\sharp_A}$, $(T^{\sharp_A})^{\sharp_A}$ and $T^{\sharp_A}$ in \eqref{kitlaa}, respectively. Moreover, second inequality follows from the first inequality by replacing $T$ and $X$ by $T^{\sharp_A}$ and $X^{\sharp_A}$, respectively.
\end{proof}

\section{Concrete examples}\label{s4}
\noindent In this section, we study the seminorm $\omega_{N_A}(\cdot)$ when $N_A(\cdot)$ is a particular seminorm.
 First we consider $N_A(\cdot)$ is $\|.\|_{A,{\alpha}}$, (see \cite{BSP}). Recall that, for $T\in \mathbb{B}_{A^{1/2}}(\mathcal{H})$
$$\|T\|_{A,\alpha}=\sup_{\|x\|_A=1}\sqrt{\alpha|\langle Tx,x\rangle_A|^2+(1-\alpha)\|Tx\|_A^2}.$$
 For every $T\in \mathbb{B}_A(\mathcal{H})$, we see that
\begin{eqnarray*}
\|\Re_A(T)\|_A&=&\omega_A(\Re_A(T))\,\,\Big(\textit{since $\Re_A(T)$ is $A$-selfadjoint} \Big)\\
&=& \sup_{\|x\|_A=1}|\langle \Re_A(T)x,x\rangle_A|\\
&=& \sup_{\|x\|_A=1}\sqrt{\alpha |\langle \Re_A(T)x,x\rangle_A|^2+(1-\alpha)|\langle \Re_A(T)x,x\rangle_A|^2  }\\
&\leq & \sup_{\|x\|_A=1}\sqrt{\alpha |\langle \Re_A(T)x,x\rangle_A|^2+(1-\alpha)\|\Re_A(T)x\|_A^2  }\\
&=& \|\Re_A(T)\|_{A,\alpha}.
\end{eqnarray*}
On the other hand, we have that
\begin{eqnarray*}
\|\Re_A(T)\|_{A,\alpha}&=& \sup_{\|x\|_A=1}\sqrt{\alpha |\langle \Re_A(T)x,x\rangle_A|^2+(1-\alpha)\|\Re_A(T)x\|_A^2  }\\
&\leq & \sup_{\|x\|_A=1} \|\Re_A(T)x\|_A\\
&=& \|\Re_A(T)\|_A.
\end{eqnarray*}
Hence, we have that
\begin{eqnarray}\label{p1}
\|\Re_A(T)\|_{A,\alpha} &=& \|\Re_A(T)\|_A.
\end{eqnarray}

\noindent On the basis of the identity \eqref{p1}, we prove the following proposition. Before that, we note that $\omega_{N_A}(\cdot)$ will be denoted by $\omega_{\|.\|_{A,\alpha}}(\cdot)$ when $N_A(\cdot)=\|.\|_{A,\alpha}.$
\begin{proposition}
If $T\in \mathbb{B}_A(\mathcal{H})$, then \, $\omega_{\|.\|_{A,\alpha}}(T)=\omega_A(T).$
\end{proposition}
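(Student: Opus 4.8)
The plan is to reduce the statement entirely to the already-established identity \eqref{p1}. First I would unwind the definition \eqref{def0} of the generalized $A$-numerical radius associated to $N_A(\cdot)=\|.\|_{A,\alpha}$, namely
$$\omega_{\|.\|_{A,\alpha}}(T)=\sup_{\theta\in\mathbb{R}}\left\|\Re_A\big(e^{i\theta}T\big)\right\|_{A,\alpha}.$$
Since $\mathbb{B}_A(\mathcal{H})$ is a subalgebra of $\mathbb{B}(\mathcal{H})$, the operator $e^{i\theta}T$ lies in $\mathbb{B}_A(\mathcal{H})$ for every $\theta\in\mathbb{R}$, so I may apply the identity \eqref{p1} with $T$ replaced by $e^{i\theta}T$; this gives $\left\|\Re_A(e^{i\theta}T)\right\|_{A,\alpha}=\left\|\Re_A(e^{i\theta}T)\right\|_A$ for each $\theta\in\mathbb{R}$.

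Next I would take the supremum over $\theta\in\mathbb{R}$ on both sides, obtaining
$$\omega_{\|.\|_{A,\alpha}}(T)=\sup_{\theta\in\mathbb{R}}\left\|\Re_A\big(e^{i\theta}T\big)\right\|_A.$$
Finally I would invoke the fact, recorded in Section \ref{S1} (see \cite[Th. 4.11]{bm}, and equivalently the remark that $\omega_{N_A}(\cdot)$ coincides with $\omega_A(\cdot)$ when $N_A(\cdot)$ is the $A$-operator seminorm), that $\omega_A(T)=\sup_{\theta\in\mathbb{R}}\|\Re_A(e^{i\theta}T)\|_A$. The right-hand side above is therefore exactly $\omega_A(T)$, and the proposition follows.

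There is essentially no obstacle in this argument; all the substance is packed into the preliminary computation \eqref{p1}, where one sandwiches $\|\Re_A(T)\|_{A,\alpha}$ between $\|\Re_A(T)\|_A$ and $\|\Re_A(T)\|_A$, using that $\Re_A(T)$ is $A$-selfadjoint (so that $\|\Re_A(T)\|_A=\omega_A(\Re_A(T))$ by Proposition \ref{kk1}(ii)) for the lower bound and the trivial estimate $\alpha|\langle \Re_A(T)x,x\rangle_A|^2+(1-\alpha)\|\Re_A(T)x\|_A^2\le\|\Re_A(T)x\|_A^2$ for the upper bound. The only point I would take care to state explicitly is that \eqref{p1} applies not just to $T$ but to the whole family $\{e^{i\theta}T:\theta\in\mathbb{R}\}$, which is immediate because $\Re_A(e^{i\theta}T)$ is again $A$-selfadjoint for every $\theta$.
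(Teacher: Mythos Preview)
Your proposal is correct and follows exactly the paper's own argument: apply \eqref{p1} to $e^{i\theta}T$ for each $\theta$, then take the supremum over $\theta$ and identify the result with $\omega_A(T)$. The only difference is that you spell out the routine justifications (why \eqref{p1} applies to $e^{i\theta}T$, and the reference for the formula $\omega_A(T)=\sup_\theta\|\Re_A(e^{i\theta}T)\|_A$) which the paper leaves implicit.
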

\begin{proof}
	It follows from (\ref{p1}) that for all $\theta\in \mathbb{R}$,
	\begin{eqnarray*}
	\|\Re_A(e^{\rm i \theta}T)\|_{A,\alpha} &=& \|\Re_A(e^{\rm i \theta}T)\|_A.
	\end{eqnarray*}
Therefore, taking supremum over all $\theta\in \mathbb{R}$, we get that $\omega_{\|.\|_{A,\alpha}}(T)=\omega_A(T).$
\end{proof}

\noindent 
Next we define the following new seminorm on $\mathbb{B}_A(\mathcal{H})$.
\begin{definition}
Let $T\in \mathbb{B}_A(\mathcal{H})$. The function $\Omega_A(\cdot): \mathbb{B}_A(\mathcal{H})\to \mathbb{R}^+$ is defined as:
\begin{eqnarray*}
\Omega_A(T)=\sup\left \{ \| \alpha T+\beta T^{\sharp_A}\|_A \,;\;\; \alpha,\beta\in \mathbb{C},\; |\alpha|^2+|\beta|^2\leq 1 \right\}.
\end{eqnarray*}

\end{definition}

\noindent In the following proposition, we sum up some basic properties of the function $\Omega_A(\cdot)$. We need the following  lemma to prove the proposition.
\begin{lemma}\cite{BFP}
	Let $a,b,c \in \mathcal{H}.$ Then  
	\begin{align}\label{I.3.T.5.2.001}
		|\langle a, c\rangle_A|^2 + |\langle b, c\rangle_A|^2\leq \|c\|_A^2\Big(\max\big\{\|a\|_A^2, \|b\|_A^2\big\} + |\langle a, b\rangle_A|\Big),
	\end{align}
	for any $a, b, c \in \mathcal{H}$.
\end{lemma}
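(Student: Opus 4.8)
The plan is to read \eqref{I.3.T.5.2.001} as the $A$-version of a classical rank-two estimate and to remove the degeneracy of $\langle\cdot,\cdot\rangle_A$ at the outset. Since $A\ge 0$, one has $\langle x,y\rangle_A=\langle A^{1/2}x,A^{1/2}y\rangle$ and $\|x\|_A=\|A^{1/2}x\|$ for all $x,y\in\mathcal{H}$. Writing $u=A^{1/2}a$, $v=A^{1/2}b$ and $w=A^{1/2}c$, the inequality \eqref{I.3.T.5.2.001} becomes
$$|\langle u,w\rangle|^2+|\langle v,w\rangle|^2\le \|w\|^2\big(\max\{\|u\|^2,\|v\|^2\}+|\langle u,v\rangle|\big),$$
so it suffices to prove this for arbitrary $u,v,w\in\mathcal{H}$ in the genuine inner product $\langle\cdot,\cdot\rangle$. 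This substitution is the conceptual core of the argument: it lets us invoke spectral theory on $\mathbb{B}(\mathcal{H})$ without worrying about $A$ being non-injective.

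Next I would recast the left-hand side operator-theoretically. Let $M\in\mathbb{B}(\mathcal{H})$ be the positive operator of rank at most two defined by $Mz=\langle z,u\rangle u+\langle z,v\rangle v$. A direct check gives $\langle Mw,w\rangle=|\langle u,w\rangle|^2+|\langle v,w\rangle|^2$, whence the left-hand side equals $\langle Mw,w\rangle\le \|M\|\,\|w\|^2$. The whole problem therefore collapses to the operator-norm bound $\|M\|\le \max\{\|u\|^2,\|v\|^2\}+|\langle u,v\rangle|$. To evaluate $\|M\|$ I would factor $M=VV^{*}$, where $V:\mathbb{C}^2\to\mathcal{H}$ sends the standard basis to $u,v$; since $VV^{*}$ and $V^{*}V$ share the same operator norm, $\|M\|$ is the largest eigenvalue of the $2\times2$ Gram matrix
$$V^{*}V=\begin{pmatrix}\|u\|^2 & \langle v,u\rangle\\ \langle u,v\rangle & \|v\|^2\end{pmatrix},$$
namely $\|M\|=\tfrac{\|u\|^2+\|v\|^2}{2}+\sqrt{\big(\tfrac{\|u\|^2-\|v\|^2}{2}\big)^2+|\langle u,v\rangle|^2}$.

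It then remains to compare this explicit value with the target. Putting $p=\|u\|^2$, $q=\|v\|^2$ (assuming $p\ge q$ without loss of generality, by symmetry) and $r=|\langle u,v\rangle|$, the inequality $\|M\|\le \max\{p,q\}+r$ becomes, after cancelling $\tfrac{p+q}{2}$, the scalar statement $\sqrt{\big(\tfrac{p-q}{2}\big)^2+r^2}\le \tfrac{p-q}{2}+r$. Both sides are nonnegative, so squaring reduces it to $0\le 2\cdot\tfrac{p-q}{2}\cdot r$, which is clear. The only step that is not purely formal is the honest computation of $\|M\|$ through the Gram matrix (equivalently, the fact that $M=VV^{*}$ and $V^{*}V$ have identical nonzero spectra); once that is in hand the remaining estimate is the elementary inequality above, so I expect no real obstacle beyond organizing these reductions correctly.
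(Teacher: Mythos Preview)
Your argument is correct. The reduction via $A^{1/2}$ is legitimate, the identification $M=VV^{*}$ with $\|M\|=\|V^{*}V\|$ computed from the $2\times2$ Gram matrix is standard, and the final scalar inequality $\sqrt{\alpha^2+r^2}\le \alpha+r$ for $\alpha=\tfrac{p-q}{2}\ge0$, $r\ge0$ is immediate.

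As for comparison: the paper does not prove this lemma at all; it is quoted verbatim from \cite{BFP} and used as a black box in the proof of Proposition~\ref{oma}(iv). So there is no in-paper argument to compare your approach against. Your spectral/Gram-matrix route is in fact the natural one and matches the standard proof of this ``Buzano-type'' inequality in the literature; what it buys over more ad hoc manipulations is that it identifies the left-hand side exactly as $\langle Mw,w\rangle$ and the right-hand side as an upper bound for $\|M\|$, so one sees at once that the inequality is sharp whenever $w$ is an eigenvector of $M$ for its top eigenvalue.
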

\begin{proposition}\label{oma}
\begin{itemize}
  \item [(i)] $\Omega_A(\cdot)$ defines a seminorm on $\mathbb{B}_A(\mathcal{H})$.
  \item [(ii)] For all $T\in\mathbb{B}_{A}(\mathcal{H})$, it holds
\begin{equation}\label{om00}
\Omega_A(T)=\sup\left \{ \sqrt{|\langle Tx, y\rangle_A|^2+|\langle T^{\sharp_A}x, y\rangle_A|^2}\,\,;\;x,y\in \mathcal{H},\,\|x\|_{A}=\|y\|_{A}= 1 \right\}.
\end{equation}
  \item [(iii)] $\Omega_A(\cdot)$ is $A$-selfadjoint invariant.
  \item [(iv)] For all $T\in\mathbb{B}_{A}(\mathcal{H})$, it holds
 \begin{equation}\label{om0}
\|T\|_A\leq \Omega_A(T)\leq\gamma_A(T)\leq \sqrt{2}\|T\|_A,
\end{equation}
where $\gamma_A(T)=\min\left\{\sqrt{\|TT^{\sharp_A} + T^{\sharp_A}T\|_A}, \sqrt{\|T\|_A^2 + \omega_A(T^2)}\right\}$.
  \item [(v)] If $T$ is $A$-selfadjoint, then $\Omega_A(T)=\sqrt{2}\|T\|_A$.
\end{itemize}
\end{proposition}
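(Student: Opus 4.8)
We outline a proof of the five assertions, using the variational formula \eqref{om00} of item (ii) as the workhorse for items (iii)--(v). For item (i) I would simply verify the seminorm axioms from the definition. Nonnegativity is clear, and finiteness follows from $\Omega_A(T)\le 2\|T\|_A$ (use $|\alpha|,|\beta|\le 1$ and $\|T^{\sharp_A}\|_A=\|T\|_A$ from Proposition \ref{diez}(ii)). Since $\sharp_A$ is conjugate-linear, $\alpha(T+S)+\beta(T+S)^{\sharp_A}=(\alpha T+\beta T^{\sharp_A})+(\alpha S+\beta S^{\sharp_A})$, so the triangle inequality follows by applying the triangle inequality for $\|\cdot\|_A$ inside the supremum. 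For homogeneity, writing $\lambda=|\lambda|e^{i\phi}$ and using $(\lambda T)^{\sharp_A}=\bar\lambda T^{\sharp_A}$ gives $\alpha(\lambda T)+\beta(\lambda T)^{\sharp_A}=|\lambda|\big((\alpha e^{i\phi})T+(\beta e^{-i\phi})T^{\sharp_A}\big)$; since $(\alpha,\beta)\mapsto(\alpha e^{i\phi},\beta e^{-i\phi})$ is a bijection of the closed unit ball of $\mathbb{C}^2$, the supremum is unchanged and $\Omega_A(\lambda T)=|\lambda|\,\Omega_A(T)$.

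For item (ii) I would prove both inequalities. The direction ``$\geq$'' comes from $\|\alpha T+\beta T^{\sharp_A}\|_A\ge|\langle(\alpha T+\beta T^{\sharp_A})x,y\rangle_A|$ for $\|x\|_A=\|y\|_A=1$, combined with the elementary identity $\sup_{|\alpha|^2+|\beta|^2\le1}|\alpha a+\beta b|=\sqrt{|a|^2+|b|^2}$ applied to $a=\langle Tx,y\rangle_A$, $b=\langle T^{\sharp_A}x,y\rangle_A$, and then the supremum over $x,y$. The direction ``$\leq$'' comes from \eqref{newsemi} (applicable since $\alpha T+\beta T^{\sharp_A}\in\mathbb{B}_{A^{1/2}}(\mathcal{H})$) together with the Cauchy--Schwarz inequality in $\mathbb{C}^2$, namely $|\alpha\langle Tx,y\rangle_A+\beta\langle T^{\sharp_A}x,y\rangle_A|\le\sqrt{|\langle Tx,y\rangle_A|^2+|\langle T^{\sharp_A}x,y\rangle_A|^2}$, followed by the supremum over $(\alpha,\beta)$. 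For item (iii) I would avoid naively swapping $T\leftrightarrow T^{\sharp_A}$ (since $(T^{\sharp_A})^{\sharp_A}=P_{\overline{\mathcal{R}(A)}}TP_{\overline{\mathcal{R}(A)}}\neq T$ in general) and instead use $\|S\|_A=\|S^{\sharp_A}\|_A$ from Proposition \ref{diez}(ii): by conjugate-linearity of $\sharp_A$ and $((T^{\sharp_A})^{\sharp_A})^{\sharp_A}=T^{\sharp_A}$ one gets $\|\alpha T+\beta T^{\sharp_A}\|_A=\|\bar\alpha T^{\sharp_A}+\bar\beta(T^{\sharp_A})^{\sharp_A}\|_A$, and taking the supremum over $\{|\alpha|^2+|\beta|^2\le1\}$ yields $\Omega_A(T)=\Omega_A(T^{\sharp_A})$.

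For item (iv) the bound $\|T\|_A\le\Omega_A(T)$ follows from the choice $(\alpha,\beta)=(1,0)$. For $\Omega_A(T)\le\gamma_A(T)$ I would estimate $\Omega_A(T)^2=\sup_{|\alpha|^2+|\beta|^2\le1,\|x\|_A=1}\|\alpha Tx+\beta T^{\sharp_A}x\|_A^2$ in two ways. First, Cauchy--Schwarz gives $\|\alpha Tx+\beta T^{\sharp_A}x\|_A^2\le(|\alpha|^2+|\beta|^2)\big(\|Tx\|_A^2+\|T^{\sharp_A}x\|_A^2\big)\le\langle(T^{\sharp_A}T+TT^{\sharp_A})x,x\rangle_A$; since $T^{\sharp_A}T+TT^{\sharp_A}\geq_A 0$, taking the supremum over $x$ and invoking Proposition \ref{kk1}(ii) yields $\Omega_A(T)^2\le\|T^{\sharp_A}T+TT^{\sharp_A}\|_A$. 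Second, applying inequality \eqref{I.3.T.5.2.001} with $a=Tx$, $b=T^{\sharp_A}x$, $c=y$ to \eqref{om00} gives $\Omega_A(T)^2\le\sup_{\|x\|_A=1}\big(\max\{\|Tx\|_A^2,\|T^{\sharp_A}x\|_A^2\}+|\langle Tx,T^{\sharp_A}x\rangle_A|\big)$; bounding $\|Tx\|_A,\|T^{\sharp_A}x\|_A\le\|T\|_A$ and using $\langle Tx,T^{\sharp_A}x\rangle_A=\langle(T^{\sharp_A})^2x,x\rangle_A$ together with $(T^{\sharp_A})^2=(T^2)^{\sharp_A}$ and the $A$-selfadjoint invariance of $\omega_A(\cdot)$ (Propositions \ref{0p1} and \ref{diez}(ii)) gives $\Omega_A(T)^2\le\|T\|_A^2+\omega_A(T^2)$. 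Finally $\gamma_A(T)\le\sqrt{2}\|T\|_A$ follows from $\|T^2\|_A\le\|T\|_A^2$, $\|TT^{\sharp_A}\|_A=\|T^{\sharp_A}T\|_A=\|T\|_A^2$, and $\omega_A(\cdot)\le\|\cdot\|_A$.

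For item (v), if $T$ is $A$-selfadjoint then $\langle Tx,y\rangle_A=\langle x,Ty\rangle_A$, hence $|\langle T^{\sharp_A}x,y\rangle_A|=|\langle x,Ty\rangle_A|=|\langle Tx,y\rangle_A|$; plugging this into \eqref{om00} and using \eqref{newsemi} gives $\Omega_A(T)^2=2\sup_{\|x\|_A=\|y\|_A=1}|\langle Tx,y\rangle_A|^2=2\|T\|_A^2$, so $\Omega_A(T)=\sqrt{2}\|T\|_A$ (this also sharpens (iv) for $A$-selfadjoint $T$). The step I expect to require the most care is the second bound in item (iv): one must apply the inequality \eqref{I.3.T.5.2.001} to the right triple of vectors, correctly identify $\langle Tx,T^{\sharp_A}x\rangle_A$ with $\langle(T^{\sharp_A})^2x,x\rangle_A$, and justify separately that $(T^{\sharp_A})^2=(T^2)^{\sharp_A}$ and that $\omega_A(\cdot)$ is invariant under $\sharp_A$. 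The recurring subtlety throughout is that $(T^{\sharp_A})^{\sharp_A}\neq T$ in general, which is exactly why (iii) is routed through $\|S\|_A=\|S^{\sharp_A}\|_A$ rather than a direct relabelling.
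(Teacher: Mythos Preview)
Your proof is correct and follows essentially the same route as the paper: the same variational identity for (ii), the same Cauchy--Schwarz estimate and the same application of \eqref{I.3.T.5.2.001} for the two upper bounds in (iv). Your treatments of (iii) and (v) are slightly more explicit than the paper's---you route (iii) through $\|S\|_A=\|S^{\sharp_A}\|_A$ to sidestep $(T^{\sharp_A})^{\sharp_A}\neq T$, while the paper just declares it trivial, and for (v) you work directly with $\langle Tx,y\rangle_A=\langle x,Ty\rangle_A$ whereas the paper first passes to $T^{\sharp_A}$ via (iii) and uses $(T^{\sharp_A})^{\sharp_A}=T^{\sharp_A}$---but these are equivalent one-line variations rather than genuinely different approaches.
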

\begin{proof}
\noindent (i)\;Follows immediately.\\
 \noindent (ii)\; Note that for any $z_1,z_2\in \mathbb{C}$, we have
\begin{align*}
\sup\left\{\Big|\alpha z_1 + \beta z_2\Big|^2;\;(\alpha,\beta)\in \mathbb{C}^2,\;|\alpha|^2 + |\beta|^2 \leq 1 \right\} = |z_1|^2 + |z_2|^2.
\end{align*}
Now, let $x,y \in \mathcal{H}$ be such that $\|x\|_{A}=\|y\|_{A}= 1$. This implies that
\begin{align*}
&|\langle Tx, y\rangle_A|^2+|\langle T^{\sharp_A}x, y\rangle_A|^2\\
&=\sup\left\{\Big|\alpha \langle Tx, y\rangle_A + \beta \langle T^{\sharp_A}x, y\rangle_A\Big|^2;\;(\alpha,\beta)\in \mathbb{C}^2,\;|\alpha|^2 + |\beta|^2 \leq 1 \right\}\\
&=\sup_{|\alpha|^2 + |\beta|^2 \leq 1}\Big| \langle  \big(\alpha T+\beta T^{\sharp_A} \big)x, y\rangle_A\Big|^2.\\
\end{align*}
Thus we get, 
$$\sqrt{|\langle Tx, y\rangle_A|^2+|\langle T^{\sharp_A}x, y\rangle_A|^2}=\sup_{|\alpha|^2 + |\beta|^2 \leq 1}\Big| \langle  \big(\alpha T+\beta T^{\sharp_A} \big)x, y\rangle_A\Big|.$$
So, the desired result follows by taking the supremum over all $x,y \in \mathcal{H}$ with $\|x\|_{A}=\|y\|_{A}= 1$ in the last equality and then using \eqref{newsemi}.
\par \vskip 0.1 cm \noindent (iii)\;Follows trivially.
\par \vskip 0.1 cm \noindent (iv)\;Since $|\langle Tx, y\rangle_A|^2\leq |\langle Tx, y\rangle_A|^2+|\langle T^{\sharp_A}x, y\rangle_A|^2$ for all $x,y\in \mathcal{H}$, then the first inequality in \eqref{om0} follows immediately by taking the supremum over all $x,y \in \mathcal{H}$ with $\|x\|_{A}=\|y\|_{A}= 1$ and then using \eqref{newsemi} together with \eqref{om00}. Now, let $z \in \mathcal{H}$ be such that $\|z\|_A=1$. Let also $\alpha, \beta \in \mathbb{C}$ be such that $|\alpha|^2 + |\beta|^2 \leq 1$. An application of  the Cauchy--Schwarz inequality gives
\begin{align*}
\big\|\alpha Tz + \beta T^{\sharp_A}z\big\|_A
& \leq \sqrt{|\alpha|^2 + |\beta|^2}\sqrt{\|Tz\|_A^2 + \|T^{\sharp_A}z\|_A^2}\\
& \leq \sqrt{\big\langle (TT^{\sharp_A} + T^{\sharp_A}T)z, z\big\rangle}\leq \sqrt{\omega_A(TT^{\sharp_A} + T^{\sharp_A}T)}.
\end{align*}
Since $TT^{\sharp_A} + T^{\sharp_A}T\geq_A0$, then by using Proposition \ref{kk1} (ii), we get
\begin{align}\label{I.2.T.5.2.07}
\big\|\alpha Tz + \beta T^{\sharp_A}z\big\|_A \leq \sqrt{\|TT^{\sharp_A} + T^{\sharp_A}T\|_A}.
\end{align}
So, by taking the supremum over all $z\in\mathcal{H}$ with $\|z\|_A=1$ and then over all $\alpha, \beta \in \mathbb{C}$ with $|\alpha|^2 + |\beta|^2 \leq 1$ in \eqref{I.2.T.5.2.07}, we obtain
\begin{align}\label{I.2.T.5.2.0}
\Omega_A(T) \leq \sqrt{\|TT^{\sharp_A} + T^{\sharp_A}T\|_A}.
\end{align}

Now, let $x,y \in \mathcal{H}$ with $\|x\|_{A}=\|y\|_{A}= 1$. By setting $a = Tx$, $b = T^{\sharp_A}x$, $c = y$ in \eqref{I.3.T.5.2.001}, we obtain
\begin{align*}
|\langle Tx, y\rangle_A|^2 + |\langle T^{\sharp_A}x, y\rangle_A|^2\leq \|y\|_A^2\Big(\max\big\{\|Tx\|_A^2, \|T^{\sharp_A}x\|_A^2\big\} + |\langle Tx, T^{\sharp_A}x\rangle_A|\Big).
\end{align*}
This yields that
\begin{align}\label{I.3.T.5.2}
\sqrt{|\langle Tx, y\rangle_A|^2 + |\langle T^{\sharp_A}x, y\rangle_A|^2}\leq \sqrt{\max\big\{\|Tx\|_A^2, \|T^{\sharp_A}x\|_A^2\big\} + |\langle T^2x, x\rangle_A|}.
\end{align}
So, by taking the supremum over all $x,y\in \mathbb{R}$ with  $\|x\|=\|y\|= 1$ in \eqref{I.3.T.5.2} and then using \eqref{om00} we get
\begin{align}\label{I.4.T.5.2}
\Omega_A(T) \leq  \sqrt{\|T\|_A^2 + \omega_A(T^2)}.
\end{align}
By combining \eqref{I.4.T.5.2} together with \eqref{I.2.T.5.2.0}, we prove the second inequality in \eqref{om0}.
\par \vskip 0.1 cm \noindent (v)\;Since $T$ is $A$-selfadjoint, then so is $T^{\sharp_A}$ and $(T^{\sharp_A})^{\sharp_A}=T^{\sharp_A}$. Thus, we see that
\begin{align*}
\Omega_A(T)
&=\Omega_A(T^{\sharp_A})\\
&=\sup\left \{ \sqrt{|\langle T^{\sharp_A}x, y\rangle_A|^2+|\langle T^{\sharp_A}x, y\rangle_A|^2}\,\,;\;x,y\in \mathcal{H},\,\|x\|_{A}=\|y\|_{A}= 1 \right\}\\
&=\sqrt{2}\|T^{\sharp_A}\|_A=\sqrt{2}\|T\|_A.
\end{align*}
Hence, we complete the proof.
\end{proof}

\noindent For the rest of this section, we will take the seminorm $\Omega_A(\cdot)$ instead of $N_A(\cdot)$. In this case, the seminorm $\omega_{N_A}(\cdot)$ will be denoted by $\omega_{\Omega_A}(\cdot)$. Hence,
$$\omega_{\Omega_A}(T) = \displaystyle{\sup_{\theta\in \mathbb{R}}}\,\Omega_A\big(\,\Re_A(e^{i\theta}T)\big),$$
for any $T\in\mathbb{B}_A(\mathcal{H})$. Since $\Omega_A(\cdot)$ is  $A$-selfadjoint invariant, so by \eqref{666}, we deduce that
\begin{align}\label{b1}
\frac{1}{2}\Omega_A(T)\leq \omega_{\Omega_A}(T)\leq \Omega_A(T),\quad\forall\,T\in \mathbb{B}_A(\mathcal{H}).
\end{align}
The following lemma gives a formula for $\omega_{\Omega_A}(T)$ in terms of $\omega_A(T)$ when $T\in \mathbb{B}_A(\mathcal{H})$.
\begin{lemma}\label{T.7.2.0}
	Let $T\in\mathbb{B}_A(\mathcal{H})$. Then
$$\omega_{\Omega_A}(T) = \sqrt{2}\omega_A(T).$$
\end{lemma}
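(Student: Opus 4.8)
The plan is to compute $\omega_{\Omega_A}(T)$ directly from the definition, reducing the supremum over $\theta\in\mathbb{R}$ of $\Omega_A(\Re_A(e^{i\theta}T))$ to something expressible via $\omega_A(T)$. First I would recall that $\Re_A(e^{i\theta}T)$ is $A$-selfadjoint for every $\theta\in\mathbb{R}$, so Proposition \ref{oma}(v) applies and yields $\Omega_A\big(\Re_A(e^{i\theta}T)\big)=\sqrt{2}\,\big\|\Re_A(e^{i\theta}T)\big\|_A$. Taking the supremum over all $\theta\in\mathbb{R}$ then gives
\[
\omega_{\Omega_A}(T)=\sqrt{2}\,\sup_{\theta\in\mathbb{R}}\big\|\Re_A(e^{i\theta}T)\big\|_A.
\]
The remaining task is to identify $\sup_{\theta\in\mathbb{R}}\big\|\Re_A(e^{i\theta}T)\big\|_A$ with $\omega_A(T)$, which is precisely the formula recalled in the Introduction from \cite{bm,zamani1}, namely $\omega_A(T)=\sup_{\theta\in\mathbb{R}}\|\Re_A(e^{i\theta}T)\|_A$. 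Substituting this gives $\omega_{\Omega_A}(T)=\sqrt{2}\,\omega_A(T)$, as desired.

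Concretely I would write the argument as a short chain of equalities: for each $\theta\in\mathbb{R}$,
\[
\Omega_A\big(\Re_A(e^{i\theta}T)\big)=\sqrt{2}\,\big\|\Re_A(e^{i\theta}T)\big\|_A,
\]
by Proposition \ref{oma}(v) applied to the $A$-selfadjoint operator $\Re_A(e^{i\theta}T)$; then $\omega_{\Omega_A}(T)=\sup_{\theta\in\mathbb{R}}\Omega_A\big(\Re_A(e^{i\theta}T)\big)=\sqrt{2}\sup_{\theta\in\mathbb{R}}\big\|\Re_A(e^{i\theta}T)\big\|_A=\sqrt{2}\,\omega_A(T)$, the last step invoking the known identity $\omega_A(T)=\sup_{\theta\in\mathbb{R}}\|\Re_A(e^{i\theta}T)\|_A$.

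There is essentially no serious obstacle here; the lemma is a clean consequence of two facts already established, Proposition \ref{oma}(v) (the value of $\Omega_A$ on $A$-selfadjoint operators) and the variational formula for $\omega_A(\cdot)$. The only small point requiring care is confirming that $\Re_A(e^{i\theta}T)\in\mathbb{B}_A(\mathcal{H})$ and is genuinely $A$-selfadjoint for all $\theta$, so that Proposition \ref{oma}(v) is legitimately applicable; this is immediate since $T\in\mathbb{B}_A(\mathcal{H})$ implies $T^{\sharp_A}\in\mathbb{B}_A(\mathcal{H})$ (Proposition \ref{diez}(i)) and $\big(\Re_A(e^{i\theta}T)\big)^{\sharp_A}=\Re_A(e^{i\theta}T)$ by a direct computation. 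Nothing more is needed, so I expect the proof to be only a few lines long.
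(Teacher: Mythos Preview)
Your proposal is correct and follows essentially the same route as the paper: apply Proposition~\ref{oma}(v) to the $A$-selfadjoint operator $\Re_A(e^{i\theta}T)$ to get $\Omega_A(\Re_A(e^{i\theta}T))=\sqrt{2}\,\|\Re_A(e^{i\theta}T)\|_A$, then take the supremum over $\theta$ and invoke the identity $\omega_A(T)=\sup_{\theta}\|\Re_A(e^{i\theta}T)\|_A$. One tiny caveat: your parenthetical justification that $\big(\Re_A(e^{i\theta}T)\big)^{\sharp_A}=\Re_A(e^{i\theta}T)$ is not literally the definition of $A$-selfadjointness (which is $AS=S^*A$) and need not hold verbatim, but this does not affect the argument since $A$-selfadjointness of $\Re_A(e^{i\theta}T)$ is what is actually required and is immediate.
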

\begin{proof}
Since $\Re_A(e^{i\theta}T)$ is $A$-selfadjoint for each $\theta\in \mathbb{R}$, then it follows from the item $(5)$ of Proposition \ref{oma} that $\Omega_A\left(\Re_A(e^{i\theta}T)\right)=\sqrt{2}\|\Re_A(e^{i\theta}T)\|_A$. So, we see that
\begin{align*}
\omega_{\Omega_A}(T)
&= \displaystyle{\sup_{\theta\in \mathbb{R}}}\,\Omega_A\big(\,\Re_A(e^{i\theta}T)\big)\\
&= \sqrt{2}\,\displaystyle{\sup_{\theta\in \mathbb{R}}}\,\big\|\,\Re_A(e^{i\theta}T)\big\|
= \sqrt{2}\omega_A(T).
\end{align*}
\end{proof}

\noindent In the following theorem, we give a sufficient condition under which the equality $\omega_{\Omega_A}(T)=\Omega_A(T)$ holds.
\begin{theorem}\label{an}
Let $T\in\mathbb{B}_A(\mathcal{H})$ be an $A$-normal operator. Then
$$\omega_{\Omega_A}(T) = \Omega_A(T).$$
\end{theorem}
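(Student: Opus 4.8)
The plan is to pin down $\Omega_A(T)$ exactly, namely to show $\Omega_A(T)=\sqrt2\,\|T\|_A$, and then to observe that $\omega_{\Omega_A}(T)$ equals the same quantity, so the two must coincide. The whole point is that $A$-normality makes every estimate produced earlier in the excerpt tight simultaneously.

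First I would compute $\omega_{\Omega_A}(T)$. By Lemma \ref{T.7.2.0} we have $\omega_{\Omega_A}(T)=\sqrt2\,\omega_A(T)$. Since $T$ is $A$-normal, i.e.\ $T^{\sharp_A}T=TT^{\sharp_A}$, the sharpness statement recorded just after \eqref{refines} gives $\omega_A(T)=\|T\|_A$; hence $\omega_{\Omega_A}(T)=\sqrt2\,\|T\|_A$. If one prefers to avoid quoting $\omega_A(T)=\|T\|_A$, one can use that $A$-normality forces $\|Tx\|_A=\|T^{\sharp_A}x\|_A$ for every $x$, because $\|Tx\|_A^2=\langle T^{\sharp_A}Tx,x\rangle_A=\langle TT^{\sharp_A}x,x\rangle_A=\|T^{\sharp_A}x\|_A^2$; feeding this into the variational formula \eqref{om00} and maximizing the resulting $2\times2$ Gram quadratic form in $y$ then recovers $\Omega_A(T)\ge\sqrt2\,\|T\|_A$ directly.

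Next I would bound $\Omega_A(T)$ from above, which is immediate: the inequality chain \eqref{om0} already gives $\Omega_A(T)\le\gamma_A(T)\le\sqrt2\,\|T\|_A$. (Equivalently, $T^{\sharp_A}T+TT^{\sharp_A}=2\,T^{\sharp_A}T$ together with Proposition \ref{diez}(ii) yields $\|T^{\sharp_A}T+TT^{\sharp_A}\|_A=2\|T\|_A^2$, and $\Omega_A(T)\le\sqrt{\|T^{\sharp_A}T+TT^{\sharp_A}\|_A}$ again gives $\Omega_A(T)\le\sqrt2\,\|T\|_A$.)

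Finally I would combine these with the lower estimate in \eqref{b1}. From the right-hand inequality $\omega_{\Omega_A}(T)\le\Omega_A(T)$ of \eqref{b1} and the two previous steps,
\[
\sqrt2\,\|T\|_A=\omega_{\Omega_A}(T)\le\Omega_A(T)\le\sqrt2\,\|T\|_A,
\]
so all the displayed quantities coincide; in particular $\omega_{\Omega_A}(T)=\Omega_A(T)$, as claimed. I do not anticipate a genuine obstacle here: the argument is purely a matter of assembling \eqref{om0}, \eqref{b1} and Lemma \ref{T.7.2.0} in the right order, the only delicate ingredient being the equality $\omega_A(T)=\|T\|_A$ for $A$-normal $T$, which is either cited from the preliminaries or re-derived along the lines indicated above.
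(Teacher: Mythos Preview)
Your proof is correct and follows essentially the same route as the paper: both compute $\omega_{\Omega_A}(T)=\sqrt2\,\omega_A(T)=\sqrt2\,\|T\|_A$ via Lemma~\ref{T.7.2.0} and $A$-normality, bound $\Omega_A(T)\le\sqrt2\,\|T\|_A$ via \eqref{om0}, and then close the sandwich using $\omega_{\Omega_A}(T)\le\Omega_A(T)$ from \eqref{b1}. Your presentation is in fact slightly more streamlined than the paper's, which routes the same ingredients through the intermediate chain \eqref{a2}.
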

\begin{proof}
Since $T$ is $A$-normal, so by \cite{feki01}, we get
\begin{equation}\label{a1}
\omega_A(T) = \|T\|_A \quad\text{and}\quad \omega_A(T^2)=\omega_A^2(T).
\end{equation}
Now, by using Lemma \ref{T.7.2.0} together with the second inequality in \eqref{b1}, one may prove that $\omega_A(T) \leq \frac{\sqrt{2}}{2}\Omega_A(T)$. This implies, through \eqref{om0}, that
\begin{align}\label{a2}
\omega_A(T) \leq \frac{\sqrt{2}}{2}\Omega_A(T) \leq \frac{\sqrt{2}}{2}\min\left\{\sqrt{\|TT^{\sharp_A} + T^{\sharp_A}T\|_A}, \sqrt{\|T\|_A^2 + \omega_A(T^2)}\right\}.
\end{align}
So, since $T$ is $A$-normal, then an application of \eqref{a1} and \eqref{a2} shows that $\Omega_A(T) = \sqrt{2}\|T\|_A$. Therefore, by Lemma \ref{T.7.2.0} together with \eqref{a1}, we infer that
$$\omega_{\Omega_A}(T) = \sqrt{2}\omega_A(T) = \sqrt{2}\|T\|_A = \Omega_A(T).$$
This completes the proof of the theorem.
\end{proof}
\begin{remark}
We note that the converse of Theorem \ref{newsemi} is, in general, not true. Indeed, let $A = \begin{pmatrix}1&0&0\\0&1&0\\0&0&1\end{pmatrix}$ and $T=\begin{pmatrix}0&1&0\\0&0&0\\0&0&2\end{pmatrix}$. One may check that $\omega_{\Omega}(T)=\Omega(T)=2 \sqrt{2}$. But $T$ is not normal.
\end{remark}

In the following theorem, we present an equivalent condition
for $\omega_{\Omega_A}(\cdot)=\frac{1}{2}\Omega_A(\cdot)$.
\begin{theorem}\label{T.3.5.2}
Let $T\in\mathbb{B}_A(\mathcal{H})$. Then, the following assertions are equivalent:
\begin{itemize}
\item[(i)] $\omega_{\Omega_A}(T) = \frac{1}{2}\Omega_A(T)$.
\item[(ii)] $\Omega_A(T) = 2\sqrt{2}\|\,\Re_A(e^{i\theta}T)\|_A$ for all $\theta \in \mathbb{R}$.
\end{itemize}
\end{theorem}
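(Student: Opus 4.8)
The plan is to prove the equivalence by exploiting the known formula $\omega_{\Omega_A}(T)=\sqrt{2}\,\omega_A(T)$ from Lemma~\ref{T.7.2.0}, together with the explicit identity $\Omega_A(\Re_A(e^{i\theta}T))=\sqrt{2}\,\|\Re_A(e^{i\theta}T)\|_A$ coming from part~(v) of Proposition~\ref{oma} (since $\Re_A(e^{i\theta}T)$ is $A$-selfadjoint for each $\theta$). These two facts will convert the statement about $\omega_{\Omega_A}$ and $\Omega_A$ into a statement purely about $\omega_A(T)$ and the quantities $\|\Re_A(e^{i\theta}T)\|_A$, which is essentially the equality case of the relation $\omega_A(T)=\sup_{\theta\in\mathbb{R}}\|\Re_A(e^{i\theta}T)\|_A$.

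First I would prove the direction (i)$\Rightarrow$(ii). Assume $\omega_{\Omega_A}(T)=\tfrac12\Omega_A(T)$. By Lemma~\ref{T.7.2.0}, the left-hand side equals $\sqrt{2}\,\omega_A(T)$, so $\Omega_A(T)=2\sqrt{2}\,\omega_A(T)$. Now fix $\theta\in\mathbb{R}$. Since $\Re_A(e^{i\theta}T)$ is $A$-selfadjoint, Proposition~\ref{oma}(v) gives $\Omega_A(\Re_A(e^{i\theta}T))=\sqrt{2}\,\|\Re_A(e^{i\theta}T)\|_A$, and by the definition of $\omega_{\Omega_A}(\cdot)$ as a supremum we have $\sqrt{2}\,\|\Re_A(e^{i\theta}T)\|_A=\Omega_A(\Re_A(e^{i\theta}T))\le \omega_{\Omega_A}(T)=\tfrac12\Omega_A(T)$, hence $\|\Re_A(e^{i\theta}T)\|_A\le \tfrac{1}{2\sqrt{2}}\Omega_A(T)$. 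On the other hand, $\omega_A(T)=\sup_{\phi}\|\Re_A(e^{i\phi}T)\|_A$ (the identity recalled in the introduction, from \cite{bm,zamani1}), so combined with $\Omega_A(T)=2\sqrt{2}\,\omega_A(T)$ we get $\sup_{\phi}\|\Re_A(e^{i\phi}T)\|_A=\tfrac{1}{2\sqrt{2}}\Omega_A(T)$; thus the upper bound $\tfrac{1}{2\sqrt{2}}\Omega_A(T)$ is actually attained, but we need it attained \emph{for every} $\theta$. This is the delicate point: one must argue that if $\sup_\theta \|\Re_A(e^{i\theta}T)\|_A$ is attained at one angle, then in fact $\|\Re_A(e^{i\theta}T)\|_A$ is constant in $\theta$. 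I would obtain this from the $A$-selfadjointness together with $\omega_A(\Re_A(e^{i\theta}T))=\|\Re_A(e^{i\theta}T)\|_A$ and a convexity/averaging argument on $\theta\mapsto\|\Re_A(e^{i\theta}T)\|_A$ — more precisely, using $\Re_A(e^{i\theta}T)=\cos\theta\,\Re_A(T)-\sin\theta\,\Im_A(T)$ and the fact that equality in the triangle-type estimates forces the two pieces to ``align''. Alternatively, and more cleanly, I would mimic the proof of Theorem~\ref{T.3.5.2}'s sibling results (e.g. Theorem~\ref{th2p} and Theorem~\ref{th4p}), where the same phenomenon occurs: the hypothesis $\omega_{N_A}(T)=\tfrac12 N_A(T)$ forces $N_A(\Re_A(e^{i\theta}T))=\tfrac12 N_A(T)$ for all $\theta$. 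Applying Theorem~\ref{th2p} with $N_A=\Omega_A$ directly gives $\Omega_A(\Re_A(e^{i\theta}T))=\tfrac12\Omega_A(T)$ for all $\theta$, and then substituting $\Omega_A(\Re_A(e^{i\theta}T))=\sqrt{2}\,\|\Re_A(e^{i\theta}T)\|_A$ yields $\sqrt{2}\,\|\Re_A(e^{i\theta}T)\|_A=\tfrac12\Omega_A(T)$, i.e. $\Omega_A(T)=2\sqrt{2}\,\|\Re_A(e^{i\theta}T)\|_A$, which is exactly (ii).

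The converse (ii)$\Rightarrow$(i) is immediate: assuming $\Omega_A(T)=2\sqrt{2}\,\|\Re_A(e^{i\theta}T)\|_A$ for all $\theta$, and using again $\Omega_A(\Re_A(e^{i\theta}T))=\sqrt{2}\,\|\Re_A(e^{i\theta}T)\|_A$, we get $\Omega_A(\Re_A(e^{i\theta}T))=\tfrac12\Omega_A(T)$ for every $\theta$, so taking the supremum over $\theta$ gives $\omega_{\Omega_A}(T)=\tfrac12\Omega_A(T)$.

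The main obstacle — and the step I would want to state carefully rather than wave at — is justifying in the forward direction that the hypothesis forces the equality $\Omega_A(\Re_A(e^{i\theta}T))=\tfrac12\Omega_A(T)$ \emph{uniformly} in $\theta$ and not merely $\sup_\theta$. As indicated above, the cleanest route is to invoke Theorem~\ref{th2p} in the special case $N_A=\Omega_A$ (which is legitimate since $\Omega_A$ is a seminorm on $\mathbb{B}_A(\mathcal{H})$ by Proposition~\ref{oma}(i)), so that no new convexity argument is needed; all the remaining work is the routine substitution of Proposition~\ref{oma}(v). I would therefore write the proof as: apply Theorem~\ref{th2p} with $N_A=\Omega_A$, then rewrite each $\Omega_A(\Re_A(e^{i\theta}T))$ via part~(v) of Proposition~\ref{oma}, and read off the claimed equivalence.
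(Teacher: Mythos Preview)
Your proposal is correct and, once the exploratory discussion is stripped away, essentially coincides with the paper's argument. The paper proves $(i)\Rightarrow(ii)$ by running inline the very computation that constitutes Theorem~\ref{th2p} (the chain $\Omega_A(T)\le \Omega_A(\Re_A(e^{i\theta}T))+\Omega_A(\Im_A(e^{i\theta}T))\le 2\omega_{\Omega_A}(T)=\Omega_A(T)$ followed by the max-identity to force $\Omega_A(\Re_A(e^{i\theta}T))=\Omega_A(\Im_A(e^{i\theta}T))=\tfrac12\Omega_A(T)$), and then applies Proposition~\ref{oma}(v); you instead invoke Theorem~\ref{th2p} with $N_A=\Omega_A$ directly and then apply Proposition~\ref{oma}(v). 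For $(ii)\Rightarrow(i)$ both you and the paper simply take the supremum (the paper routes this through Lemma~\ref{T.7.2.0}, you through the definition; these are the same computation). The only difference is packaging: you reuse the general result already proved, whereas the paper reproves its specialization, so your write-up is slightly shorter but mathematically identical.
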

\begin{proof}
$(ii)\Rightarrow (i)$ Assume that $\Omega_A(T) = 2\sqrt{2}\|\,\Re_A(e^{i\theta}T)\|_A$ for all $\theta \in \mathbb{R}$.
So, by applying Lemma \ref{T.7.2.0}, we get
\begin{align*}
\frac{1}{2}\Omega_A(T) = \sqrt{2}\,\displaystyle{\sup_{\theta\in \mathbb{R}}}\,\big\|\,\Re_A(e^{i\theta}T)\big\|_A
= \sqrt{2}\omega_A(T) = \omega_{\Omega_A}(T).
\end{align*}
This shows that $\omega_{\Omega_A}(T) = \frac{1}{2}\Omega_A(T)$.

$(i)\Rightarrow (ii)$ Suppose that $\omega_{\Omega_A}(T) = \frac{1}{2}\Omega_A(T)$. Let $\theta\in \mathbb{R}$. We see that
\begin{align*}
\Omega_A(T)
 &= \Omega_A(e^{i\theta}T)= \Omega_A\big(\Re_A(e^{i\theta}T) + i\,\Im_A(e^{i\theta}T)\big)\\
 & \leq \Omega_A\big(\Re_A(e^{i\theta}T)\big) + \Omega_A\big(\Im_A(e^{i\theta}T)\big)\\
&\leq \omega_{\Omega_A}(T) + \omega_{\Omega_A}(T) = \Omega_A(T).
\end{align*}
This implies that
\begin{align}\label{T.I.1.3.5.2}
\Omega_A\big(\Re_A(e^{i\theta}T)\big) + \Omega_A\big(\Im_A(e^{i\theta}T)\big) = \Omega_A(T).
\end{align}
Moreover, one observes that
\begin{align*}
\frac{1}{2}\Omega_A(T)
&= \omega_{\Omega_A}(T)\\
&\geq \max\Big\{\Omega_A\big(\Re_A(e^{i\theta}T)\big), \Omega_A\big(\Im_A(e^{i\theta}T)\big)\Big\}\\
& = \frac{\Omega_A\big(\Re_A(e^{i\theta}T)\big) + \Omega_A\big(\Im_A(e^{i\theta}T)\big)}{2}+ \frac{\Big|\Omega_A\big(\Re_A(e^{i\theta}T)\big) - \Omega_A\big(\Im_A(e^{i\theta}T)\big)\Big|}{2}\\
& \geq \frac{\Omega_A\Big(\Re_A(e^{i\theta}T) + i\,\Im_A(e^{i\theta}T)\Big)}{2}
+ \frac{\Big|\Omega_A\big(\Re_A(e^{i\theta}T)\big) - \Omega_A\big(\Im_A(e^{i\theta}T)\big)\Big|}{2}\\
& = \frac{\Omega_A(e^{i\theta}T)}{2} + \frac{\Big|\Omega_A\big(\Re_A(e^{i\theta}T)\big) - \Omega_A\big(\Im_A(e^{i\theta}T)\big)\Big|}{2}
\geq \frac{1}{2}\Omega_A(T).
\end{align*}
This shows that
\begin{align}\label{T.I.2.3.5.2}
\Omega_A\big(\Re_A(e^{i\theta}T)\big) = \Omega_A\big(\Im_A(e^{i\theta}T)\big).
\end{align}
Therefore, a combination of \eqref{T.I.1.3.5.2} and \eqref{T.I.2.3.5.2} gives $\Omega_A(T) = 2\Omega_A\big(\Re_A(e^{i\theta}T)\big)$. Finally, since $\Re_A(e^{i\theta}T)$ is $A$-selfadjoint, then  Proposition \ref{oma} (v) gives us
$\Omega_A(T) = 2\sqrt{2}\|\,\Re_A(e^{i\theta}T)\|_A$ as required.
\end{proof}


\begin{thebibliography}{10} 
\footnotesize
\bibitem{kittaneh0} A. Abu-Omar and F. Kittaneh, \textit{A generalization of the numerical radius}, Linear Algebra Appl. 569 (2019) 323-334.


	
	
\bibitem{acg1}{M.L. Arias, G. Corach and M.C. Gonzalez,} \textit{Partial isometries in semi-Hilbertian spaces,} Linear Algebra Appl. 428(7) (2008) 1460-1475.


\bibitem{acg2} {M.L. Arias, G. Corach, M.C. Gonzalez,} \textit{Metric properties of projections in semi-Hilbertian spaces,} Integral Equations and Operator Theory, 62 (2008), pp.11-28.



\bibitem{bm} A. Bourhim, M. Mabrouk, $a$-Numerical range on $C^*$-algebras. Positivity (2021). \url{https://doi.org/10.1007/s11117-021-00825-6}

\bibitem{bakfeki01} {H. Baklouti, K.Feki, O.A.M. Sid Ahmed,} {Joint numerical ranges of operators in semi-Hilbertian spaces,}  Linear Algebra Appl. 555 (2018) 266-284.

\bibitem{BFP} P. Bhunia, K. Feki and K. Paul, \textit{Numerical radius inequalities for products and sums of semi-Hilbertian space operators}, Filomat (2021), Accepted.

\bibitem{bc} {T. Bottazzi, C. Conde,} \textit{Generalized numerical radius and related inequalities,}  operators and matrices, 2021 (to appear)



\bibitem{BNP1} P. Bhunia, R.K. Nayak and K. Paul, \textit{Improvement of $A$-numerical radius inequalities of semi-Hilbertian space operators}, Results Math 76, 120 (2021). https://doi.org/10.1007/s00025-021-01439-w


\bibitem{BSP} P. Bhunia, A. Sen and K. Paul, \textit{ New semi-norm of semi-Hilbertian space operators and its application}, (2021). {https://arxiv.org/abs/2107.09431}

\bibitem{BPN} P. Bhunia, K. Paul and R.K. Nayak, \textit{On inequalities for $A$-numerical radius of operators}, Electron. J. Linear Algebra 36 (2020) 143-157. 






\bibitem{doug}{R.G. Douglas,} \textit{On majorization, factorization and range inclusion of operators in Hilbert space,} Proc. Amer. Math. Soc. 17 (1966) 413-416.

\bibitem{feki01} {K. Feki}, \textit{Spectral radius of semi-Hilbertian space operators and its applications}, Ann. Funct. Anal. 11 (2020) 929-946.

\bibitem{fekijk} {K. Feki}, \textit{Some numerical radius inequalities for semi-Hilbertian space operators}, J. Korean Math. Soc. (2021).
{https://doi.org/10.4134/JKMS.j210017}

\bibitem{feki03} {K. Feki}, \textit{A note on the $A$-numerical radius of operators in semi-Hilbert spaces}, Arch. Math. 115 (2020) 535-544.

\bibitem{fg}{M. Faghih-Ahmadi, F. Gorjizadeh,} \textit{A-numerical radius of A-normal operators in semi-Hilbertian spaces,} Italian journal of pure and applied mathematics n. 36-2016 (73-78).

\bibitem{fong} C.K. Fong, J.A.R. Holbrook, \textit{Unitarily invariant operators norms}, Canad. J. Math. 35, 274-299 (1983).


\bibitem{goldberg} M. Goldberg and E. Tadmor, \textit{ On the numerical radius and its applications}, Linear Algebra Appl. 42  (1982), pp. 263 - 284.


\bibitem{halmos} P. Halmos, \textit{ Introduction to Hilbert space and the theory of spectral multiplicity}, Chelsea, 1951.



\bibitem{FK} F. Kittaneh, \textit{Numerical radius inequalities for Hilbert space operators}, {  Studia Math.}, 168(1) (2005) 73-80.

\bibitem{KS} F. Kittaneh and S. Sahoo, \textit{ On A-numerical radius equalities and inequalities for certain operator matrices}, Ann. Funct. Anal. 12, 52 (2021). https://doi.org/10.1007/s43034-021-00137-6


\bibitem{saddi}{A. Saddi,} \textit{$A$-Normal operators in Semi-Hilbertian spaces,} 	
Aust. J. Math. Anal. Appl. 9(1) 5 (2012) 1-12.



\bibitem{ZMXF}  A. Zamani, M.S. Moslehian, Q. Xu, and C. Fu, \textit{Numerical radius inequalities concerning with algebra norms}, Mediterr. J. Math. 18, 38 (2021). https://doi.org/10.1007/s00009-020-01665-6


\bibitem{zamani1}{A. Zamani,} \textit {$A$-numerical radius inequalities for semi-Hilbertian space operators,} Linear Algebra Appl. 578 (2019) 159-183.

 \end{thebibliography}
\end{document}